\documentclass[a4paper,11pt,reqno]{amsart}
\usepackage[utf8]{inputenc}
\usepackage[T1]{fontenc}
\usepackage[english]{babel}
\usepackage[
  left=2cm,
  right=2cm,
  top=2.5cm,
  bottom=2.5cm
]{geometry}
\usepackage{mathtools}
\usepackage{amssymb}
\usepackage{mathrsfs}
\IfFileExists{newtxtext.sty}{\usepackage{newtxtext}}{}
\usepackage{microtype}

\usepackage{graphicx}
\usepackage{float}

\usepackage{enumitem}
\usepackage{array,booktabs}

\allowdisplaybreaks 

\numberwithin{equation}{section}

\makeatletter

\long\def\guardarfrag#1#2{%
  \expandafter\long\expandafter\def
  \csname frag@#1\endcsname{#2}%
}

\newcommand{\fragmento}[1]{%
  \@ifundefined{frag@#1}
    {%
      \PackageError{fragmentos}
        {Fragment `#1' is not defined}
        {Check the fragment name.}%
    }
    {%
      \csname frag@#1\endcsname
    }%
}

\makeatother

\newtheorem{proposition}{Proposition}[section]
\newtheorem{theorem}[proposition]{Theorem}
\newtheorem{lemma}[proposition]{Lemma}

\theoremstyle{definition}

\newtheorem{definition}[proposition]{Definition}

\newcommand{\din}{\textnormal{d}}

\newcommand{\loc}{\textnormal{loc}}
\newcommand{\R}{\mathbb{R}}

 \newcommand{\consy}{\mathscr C}

     \newcommand{\xeuc}{\check{x}}
     \newcommand{\yeuc}{\check{y}}
     \newcommand{\zeuc}{\check{z}}
     
     \newcommand{\cil}{\mathcal{M}}
     
\newcommand{\exnum}{q^{\#}}
\newcommand{\exast}{q^\ast}
\newcommand{\centro}{\mathcal{O}}
\newcommand{\exa}{B}

\newcommand{\dcrit}{2^{\ast}}
\newcommand{\dcritb}{2^{\#}}
\newcommand{\spt}{\operatorname{supp}}

\newcommand{\radpa}{\vartheta}

\newcommand{\consob}{\kappa _0}
\newcommand{\consobb}{\kappa _1}
\newcommand{\consobbm}{\kappa _{1,+}}

\newcommand{\consgeo}{\Theta}
\newcommand{\vol}{\operatorname{Vol}}

\newcommand{\lambdaj}{\lambda_j}
\newcommand{\opnj}{\mathcal{N}_g}

\usepackage{etoolbox}

\makeatletter

\patchcmd{\@tocline}
  {\hfil}
  {\leaders\hbox to 0.6em{\hss.\hss}\hfill}
  {}{}

\renewcommand{\@pnumwidth}{1.5em}

\def\l@section{\@tocline{1}{0.6em}{0em}{}{}}         
\def\l@subsection{\@tocline{2}{0.3em}{2em}{}{}}      
\def\l@subsubsection{\@tocline{3}{0.2em}{4.7em}{}{}}   

\patchcmd{\@tocline}
  {\vskip #1}
  {\vskip #1\relax}
  {}{}

\makeatother

\title[Critical Robin problems with noncompact boundary]{Critical Robin Problems on Manifolds with Noncompact Boundary}

\author{Juan Pablo Alcon Apaza}

\address{Juan Pablo Alcon Apaza, Department of Mathematics, Universidade Federal de Minas Gerais, 31270-901, Belo Horizonte, MG, Brazil}

\email{juanpabloalconapaza@gmail.com}

\keywords{Critical elliptic equations, nonlinear Robin boundary conditions, Yamabe equation, Sobolev trace inequalities}
\subjclass[2020]{Primary 35J61; Secondary 35J25, 35B33, 46E35, 53C21, 58J05}

\date{}

\begin{document}
\begin{abstract} 
We work on connected, noncompact $n$-dimensional Riemannian manifolds $(\cil,g)$, $n\ge3$, that are complete with respect to $d_g$ and have noncompact boundary and bounded geometry. We study the existence of {    positive} solutions in $C^\infty(\cil)\cap L^{\dcrit}(\cil)\cap L^\infty(\cil)$ to the Yamabe problem:
\begin{equation*}
\left\{
\begin{aligned}
-\Delta_g u+\frac{n-2}{4(n-1)}R_g u
&=\consy_0u^{\frac{n+2}{n-2}}
&&\text{in }\cil,\\
\frac{\partial u}{\partial\nu}
+\frac{n-2}{2(n-1)}H_g u
&=\consy_1u^{\frac{n}{n-2}}
&&\text{on }\partial{    \cil},
\end{aligned}
\right.
\end{equation*}
where $\consy_0=Y_{\consy}(\cil)$, $\consy_1=Y_{\consy}(\cil)\consy\overline d^{\frac{2}{n-2}}$, the parameter $\consy\ge0$ is fixed, and
\begin{equation*}
Y_{\consy}(\cil)
:=\inf_{u\in C_c^\infty(\cil)\setminus\{0\}}
\frac{E_g(u)}
{\left(\|u\|_{L^{2^\ast}(\cil)}^{2^\ast}
+\consy\|u\|_{L^{2^\#}(\partial\cil)}^{2^\ast}\right)^{2/2^\ast}}.
\end{equation*}
Here $E_g$ is the associated quadratic energy. We assume $Y_{\consy}(\cil)<Y_{\consy,\infty}(\cil)$ and $Y_{\consy,\infty}(\cil)>0$, where $Y_{\consy,\infty}$ is the corresponding variational level at infinity.

We approach this problem using nonnegative solutions $u_j\in W^{1,2}_\partial(B_g(\centro,j))$, $j\in\mathbb N$, {    $j\ge j_0$}, for which we prove that
\[
\sup_{j\ge j_0}\|u_j\|_{L^\infty(B_g(\centro,j-1))}<\infty.
\]
For $\consy>0$, we also prove that, along a subsequence, $\|u_j\|_{L^{\dcritb}(B_g(\centro,j){    \cap\partial\cil})}
\to\overline d$, with $ 0<\overline d\le\consy^{-1/\dcrit}$. For $\consy=0$, we set $\overline d=0$.
\end{abstract}

\maketitle

\tableofcontents

\section{Introduction}\label{6}

The Yamabe problem asks whether a Riemannian metric on a smooth connected compact manifold without boundary can be conformally deformed to a metric of constant scalar curvature. For $n\ge3$, the metric $\widetilde g=u^{4/(n-2)}g$ has constant scalar curvature precisely when the smooth positive function $u$ satisfies
\[
\Delta_g u-\frac{n-2}{4(n-1)}R_g u+Ku^{\frac{n+2}{n-2}}=0,
\qquad K=\frac{n-2}{4(n-1)}R_{\widetilde g}.
\]
Here $\Delta_g$ is the Laplace--Beltrami operator and $R_g$ is the scalar curvature. Following the work of Yamabe \cite{Yamabe1960}, the compact problem was solved by Trudinger \cite{Trudinger1968}, Aubin \cite{Aubin1976}, and Schoen \cite{Schoen1984}. Lee and Parker \cite{LeeParker1987} give a detailed account of its solution.

On complete noncompact manifolds without boundary, the existence of a positive solution and the completeness of the resulting conformal metric are separate questions. Jin \cite{Jin1988} constructs complete metrics whose conformal class contains no complete metric of constant scalar curvature. For the negative scalar curvature problem, Aviles and McOwen \cite{AvilesMcOwen1985,AvilesMcOwen1988} study conformal deformations of complete noncompact manifolds. Li, Tam, and Yang \cite{LiTamYang1998} study related semilinear equations and their geometric applications.

Let $(M^n,g)$ be connected, complete, noncompact, and without boundary. Fix $O\in M$ and write $d(x)=d_g(O,x)$. Following Kim \cite{Kim1996,Kim1997}, consider the Yamabe constant and the Yamabe constant at infinity:
\[
\begin{aligned}
Y(M)&:=\inf_{v\in C_c^\infty(M)\setminus\{0\}}
\frac{\displaystyle\int_M\left(|\nabla_gv|^2+\frac{n-2}{4(n-1)}R_gv^2\right)\,\din v_g}{\|v\|_{L^{\dcrit}(M)}^2},\\
Y_\infty(M)&:=\lim_{r\to\infty}Y\bigl(M\setminus\overline{B_g(O,r)}\bigr).
\end{aligned}
\]
Gro{\ss}e and Nardmann \cite{GrosseNardmann2014} prove that the definition of $Y_\infty(M)$ is independent of the chosen
compact exhaustion and that $Y(M)>-\infty$ whenever
$Y_\infty(M)>0$.

 Zhang \cite{Zhang2003} obtains a positive finite-energy solution with $K=1$ under the assumptions
\[
R_g\ge0,\qquad 0\le Y(M)<Y_\infty(M),\qquad
\operatorname{Vol}_g(B_g(O,r))\le Cr^{n-\rho}
\]
for some $\rho>0$ and all sufficiently large $r$. The solution satisfies
\[
u\in L^{\dcrit}(M),\qquad
\lim_{d(x)\to\infty}d(x)^{(n-2)/2}u(x)=0.
\]

Wei \cite{Wei2019} allows the scalar curvature to change sign. Assuming
\[
Y(M)<Y_\infty(M),\qquad Y_\infty(M)>0,
\]
he proves that the bounds
\[
\begin{gathered}
R_g(x)\ge-Cd(x)^{-2},\qquad
\operatorname{Vol}_g(B_g(O,r))\le Cr^{n+\rho},\\
\rho<\rho_0\bigl(n,Y(M),Y_\infty(M)\bigr),
\end{gathered}
\]
for large $d(x)$ and $r$, where $C>0$ and $\rho_0>0$, give a positive solution satisfying $u(x)=O(d(x)^{-\alpha})$ for some $\alpha>0$. The coefficient $K$ is $1$, $0$, or $-1$ according to the sign of $Y(M)$.

Gro{\ss}e \cite{Grosse2013} uses weighted Sobolev embeddings on connected complete noncompact manifolds of bounded geometry. Her hypotheses include
\[
Y(M)<Y_\infty(M),\qquad
\inf_{v\in C_c^\infty(M)\setminus\{0\}}
\frac{\displaystyle\int_M\left(|\nabla_gv|^2+\frac{n-2}{4(n-1)}R_gv^2\right)\,\din v_g}
{\|v\|_{L^2(M)}^2}>0,
\]
and her solution is a positive bounded minimizer of the Yamabe quotient. Wei, in \cite[Theorem~1.5]{Wei2019}, instead obtains a bounded
positive solution under the assumptions
\[
Y(M)<Y_\infty(M),\qquad Y_\infty(M)>0,\qquad
|\operatorname{Ric}_g|\le C_0,\qquad
\operatorname{inj}(M,g)\ge i_0>0.
\]
These existence results do not assert that the conformal metric
is complete.

\subsection{The Yamabe problem on manifolds with boundary}

On a smooth Riemannian manifold with boundary, the conformal metric $\widetilde g=u^{4/(n-2)}g$ has constant scalar curvature and constant boundary mean curvature precisely when $u>0$ satisfies
\begin{equation}\label{1}
\left\{
\begin{aligned}
-\Delta_g u+\frac{n-2}{4(n-1)}R_g u
&=\mathscr C_0u^{\frac{n+2}{n-2}}
&&\text{in }\operatorname{int}M,\\
\frac{\partial u}{\partial\nu}
+\frac{n-2}{2(n-1)}H_g u
&=\mathscr C_1u^{\frac n{n-2}}
&&\text{on }\partial M.
\end{aligned}
\right.
\end{equation}
Here we use the conventions for $H_g$ and $\nu$ stated after \eqref{e1}. In these conventions,
\[
R_{\widetilde g}=\frac{4(n-1)}{n-2}\mathscr C_0,
\qquad H_{\widetilde g}=\frac{2(n-1)}{n-2}\mathscr C_1.
\]

First suppose that $M$ is compact and connected, with smooth nonempty boundary. Cherrier \cite{zbMATH03879780} studies nonlinear Neumann problems on compact Riemannian manifolds and applies his existence and regularity results to prescribed scalar curvature and boundary mean curvature. Escobar \cite{Escobar1992,EscobarScalarFlat1992} studies the two problems
\[
\left\{
\begin{aligned}
R_{\widetilde g}&=\text{constant}&&\text{in }M,\\
H_{\widetilde g}&=0&&\text{on }\partial M,
\end{aligned}
\right.
\qquad\text{and}\qquad
\left\{
\begin{aligned}
R_{\widetilde g}&=0&&\text{in }M,\\
H_{\widetilde g}&=\text{constant}&&\text{on }\partial M.
\end{aligned}
\right.
\]
With $E_g$ defined by \eqref{e1},   the associated constants are
\[
\begin{aligned}
Y(M,\partial M)
&:=\inf_{v\in C^\infty(M)\setminus\{0\}}
\frac{E_g(v)}{\|v\|_{L^{\dcrit}(M)}^2},\\
Q(M,\partial M)
&:=\inf_{\substack{v\in C^\infty(M)\\v|_{\partial M}\not\equiv0}}
\frac{E_g(v)}{\|v\|_{L^{\dcritb}(\partial M)}^2}.
\end{aligned}
\]
Escobar \cite{Escobar1992} proves that
\[
Y(M,\partial M)<Y(S^n_+,\partial S^n_+)
\]
gives a smooth positive minimizer for the problem with minimal boundary. For the scalar-flat problem, Escobar \cite{EscobarScalarFlat1992} uses the strict comparison with the Euclidean ball.  The sufficient condition is
\[
-\infty<Q(M,\partial M)<Q(B^n,\partial B^n).
\]
The assumption $Q(M,\partial M)>-\infty$ is also stated by
Marques \cite{Marques2005} and
Almaraz \cite{Almaraz2010}.
Here $S^n_+$ has the round metric and $B^n$ has the Euclidean metric.

Marques \cite{Marques2005} and Almaraz \cite{Almaraz2010} establish the strict inequality for $Q(M,\partial M)$ on compact manifolds with umbilic boundary under dimensional and Weyl curvature hypotheses. Brendle and Chen \cite{BrendleChen2014} study the problem with minimal boundary in the umbilic case. For the scalar-flat problem, Mayer and Ndiaye \cite{MayerNdiaye2017} use the barycenter technique to prove existence when
\[
n\ge6,\qquad \partial M\text{ is umbilic},\qquad Q(M,\partial M)>0.
\]
Their conclusion is the existence of a solution; it does not assert that the solution minimizes $Q(M,\partial M)$.

For compact manifolds with $Y(M,\partial M)>0$, Han and Li
\cite{HanLi1999} ask whether \eqref{1} admits
a positive solution for every
\[
\mathscr C_0>0,\qquad \mathscr C_1\in\mathbb R.
\]
They prove existence for locally conformally flat manifolds
with umbilic boundary. The same authors, in
\cite{HanLi2000}, prove existence for $n\ge5$
when the boundary has a nonumbilic point. Chen, Ruan, and Sun \cite{ChenRuanSun2019}
establish the Han--Li conjecture for $3\le n\le7$ and prove
it in additional cases for $n\ge8$.

On compact manifolds with nonempty boundary, Chen and Sun
\cite{ChenSun2019} define
\[
Y_{a,b}(M,\partial M)
:=\inf_{v\in H^1(M)\setminus\{0\}}
\frac{\displaystyle\frac{4(n-1)}{n-2}E_g(v)}
{a\|v\|_{L^{\dcrit}(M)}^2
+2(n-1)b\|v\|_{L^{\dcritb}(\partial M)}^2},
\qquad a,b>0,
\]
and prove that the infimum is attained by a smooth positive
function if
\[
Y_{a,b}(M,\partial M)<Y_{a,b}(S^n_+,\partial S^n_+).
\]
Moreover,  they verify
this inequality when $Y(M,\partial M)>0$, $M$ is not conformally
equivalent to the hemisphere, and either $3\le n\le7$ or $M$
is spin. For fixed $a,b$, the two curvature constants are
determined by the minimizer and a common Lagrange multiplier.
To prescribe the constants independently, Han and Li
\cite[Section~1]{HanLi2000} and Chen, Ruan, and Sun
\cite[Section~1]{ChenRuanSun2019} use a mountain pass argument.

In the noncompact case, Maggi and Neumayer
\cite{MaggiNeumayer2017} study sharp Sobolev and trace
inequalities on the Euclidean half-space $\mathbb R^n_+$.
In particular, they consider
\[
\Phi(T):=\inf\left\{
\|\nabla v\|_{L^2(\mathbb R^n_+)}\mid
\|v\|_{L^{\dcrit}(\mathbb R^n_+)}=1,\ 
\|v(\cdot,0)\|_{L^{\dcritb}(\mathbb R^{n-1})}=T
\right\},\qquad T>0,
\]
over functions $v\in L^{\dcrit}(\mathbb R^n_+)$ with
$\nabla v\in L^2(\mathbb R^n_+)$.
The authors, in \cite[Theorem~1.1]{MaggiNeumayer2017},
describe the nonnegative minimizers up to dilations and
translations parallel to the boundary.

For a complete noncompact manifold with compact nonempty boundary and finitely many ends, Schwartz \cite{Schwartz2006} assumes
\[
R_g=0,\qquad H_g>0,\qquad
\operatorname{Ric}_g\ge-\frac{(n-1)A}{(1+d_g(O,x))^2}g,\qquad A\ge0,
\]
and
\[
\int_1^\infty\frac{t}{\operatorname{Vol}_g(E\cap B_g(O,t))}\,\din t<\infty
\quad\text{for every end }E.
\]
For every $f\in C^\infty(\partial M)$, he constructs a conformal metric satisfying
\[
R_{\widetilde g}=0,\qquad H_{\widetilde g}=f,\qquad
\widetilde g=u^{4/(n-2)}g,\qquad 0<\epsilon\le u\le C.
\]
The positive lower bound ensures that $\widetilde g$ is complete.

For complete noncompact manifolds with possibly noncompact boundary, Albanese and Rigoli \cite{AlbaneseRigoli2017} prove weak maximum principles under volume growth and boundary hypotheses and apply them to conformal diffeomorphisms. Cruz and Santos \cite{CruzSantos2025} consider complete noncompact manifolds with noncompact boundary and prove that
\[
R_g\le-c_0<0,\qquad H_g=0
\]
allow every bounded smooth function $f<0$ to be realized by a complete conformal metric with
\[
R_{\widetilde g}=f\quad\text{in }M,\qquad
H_{\widetilde g}=0\quad\text{on }\partial M.
\]

Alcon and Almaraz \cite{AlconApazaAlmaraz2024} start from a compact space $M$ locally modeled on solid cones and a nonempty finite union $\Gamma=\bigcup_m\Gamma_m$ of compact submanifolds satisfying their geometric assumptions. The smooth manifold $M\setminus\Gamma$ is noncompact; its boundary is noncompact when $\Gamma\cap\partial M\ne\varnothing$. For any constants $r_0,h_0>0$, they obtain a conformal metric on $M\setminus\Gamma$, complete as a metric space, with
\[
R_{\widetilde g}=-r_0,\qquad H_{\widetilde g}=-h_0,
\]
if and only if
\[
\dim\Gamma_m>\frac{n-2}{2}\qquad\text{for every }m.
\]
The conformal factor satisfies
\[
\liminf_{x\to\Gamma}u(x)\operatorname{dist}_g(x,\Gamma)^{(n-2)/2}>0.
\]

\subsection{Main result}

We work on a connected, noncompact $n$-dimensional Riemannian manifold $(\cil,g)$, $n\ge3$, with noncompact boundary. We assume that $(\cil,d_g)$ is complete and that $\cil$ has bounded geometry as in Definition~\ref{bg}.

For $u\in C_c^\infty(\cil)$, define the quadratic form associated with
\[
\mathcal L_g=-\Delta_g+\frac{n-2}{4(n-1)}R_g,
\qquad
\mathcal B_g=\frac{\partial}{\partial\nu}
+\frac{n-2}{2(n-1)}H_g
\]
by
\begin{equation}\label{e1}
E_g(u):=\int_{\cil}
\left(|\nabla_gu|^2+\frac{n-2}{4(n-1)}R_gu^2\right)\,\din v_g
+\int_{\partial\cil}\frac{n-2}{2(n-1)}H_gu^2\,\din s_g.
\end{equation}
The same expression is used for $u\in W^{1,2}(\cil)$ whenever its terms are finite. Here $\Delta_g=\operatorname{div}_g\nabla_g$, $\nu$ is the outward unit normal along $\partial\cil$, and
\[
\mathrm{II}(X,Y):=g(\nabla_X\nu,Y),\qquad
H_g:=\operatorname{tr}_{g|_{\partial\cil}}\mathrm{II},
\qquad X,Y\in T(\partial\cil).
\]
For these conventions, the conformal change $\widetilde g=u^{4/(n-2)}g$ satisfies
\[
\mathcal L_gu=\frac{n-2}{4(n-1)}R_{\widetilde g}u^{\frac{n+2}{n-2}},
\qquad
\mathcal B_gu=\frac{n-2}{2(n-1)}H_{\widetilde g}u^{\frac n{n-2}}.
\]

For a compact set $K\subset\cil$, define
\[
C_c^\infty(\cil\setminus K)
:=\{u\in C_c^\infty(\cil):\operatorname{supp}u\subset\cil\setminus K\}.
\]
For a fixed $\consy\ge0$, define
\begin{equation}\label{y2}
Y_{\consy}(\cil)
:=\inf_{u\in C_c^\infty(\cil)\setminus\{0\}}
\frac{E_g(u)}
{\left(\|u\|_{L^{{\dcrit}}(\cil)}^{{\dcrit}}
+\consy\|u\|_{L^{{\dcritb}}(\partial\cil)}^{{\dcrit}}\right)^{2/{\dcrit}}},
\end{equation}
and define $Y_{\consy}(\cil\setminus K)$ by restricting the infimum to nonzero functions in $C_c^\infty(\cil\setminus K)$. For a compact exhaustion satisfying
\[
K_i\subset\operatorname{int}_{\cil}K_{i+1},\qquad
\bigcup_{i\ge1}K_i=\cil,
\]
set
\begin{equation}\label{y3}
Y_{\consy,\infty}(\cil)
:=\lim_{i\to\infty}Y_{\consy}(\cil\setminus K_i).
\end{equation}
The limit is independent of the exhaustion.

Define also
\begin{equation}\label{m1}
\Lambda^+_{n,\consy}
:=\inf_{v\in C_c^\infty(\overline{\mathbb R^n_+})\setminus\{0\}}
\frac{\displaystyle\int_{\mathbb R^n_+}|\nabla v|^2\,\din x}
{\left(\|v\|_{L^{{\dcrit}}(\mathbb R^n_+)}^{{\dcrit}}
+\consy\|v(\cdot,0)\|_{L^{{\dcritb}}(\mathbb R^{n-1})}^{{\dcrit}}\right)^{2/{\dcrit}}}.
\end{equation}
We assume
\begin{equation}\label{r2}
\begin{gathered}
Y_{\consy}(\cil)<Y_{\consy,\infty}(\cil),
\qquad Y_{\consy,\infty}(\cil)>0,\\
Y_{\consy}(\cil)<\Lambda^+_{n,\consy}.
\end{gathered}
\end{equation}
By Lemma~\ref{l2},
\[
-\infty<Y_{\consy}(\cil)
\le Y_{\consy,\infty}(\cil)
\le\Lambda^+_{n,\consy}.
\]
Consequently, the last inequality in \eqref{r2} follows from the first.

Fix $\centro\in\cil$ and set $\exa_j:=B_g(\centro,j)$. On these domains we use
\[
W^{1,2}_{\partial}(\exa_j)
=\overline{C_c^\infty(\exa_j)}^{\,W^{1,2}(\exa_j)},
\]
as in \eqref{w1}.

Define
\[
\lambdaj:=\inf_{v\in W^{1,2}_{\partial}(\exa_j)\setminus\{0\}}
\frac{E_g(v)}
{\left(\|v\|_{L^{{\dcrit}}(\exa_j)}^{{\dcrit}}
+\consy\|v\|_{L^{{\dcritb}}(\exa_j\cap\partial\cil)}^{{\dcrit}}\right)^{2/{\dcrit}}},
\]
where $v$ is identified with its zero extension to $\cil$. Lemma~\ref{l5} gives $\lambda_j\searrow Y_{\consy}(\cil)$. For all sufficiently large $j$, Proposition~\ref{r29} supplies a nonnegative minimizer $u_j$ with
\[
\|u_j\|_{L^{{\dcrit}}(\exa_j)}^{{\dcrit}}
+\consy\|u_j\|_{L^{{\dcritb}}(\exa_j\cap\partial\cil)}^{{\dcrit}}=1,
\qquad E_g(u_j)=\lambda_j.
\]

Set $d_j:=\|u_j\|_{L^{{\dcritb}}(\exa_j\cap\partial\cil)}$. The Euler--Lagrange equations for $u_j$ are
\[
\left\{
\begin{aligned}
\mathcal L_gu_j&=\lambda_j u_j^{{\dcrit}-1}
&&\text{in }\exa_j\cap\operatorname{int}\cil,\\
\mathcal B_gu_j&=\lambda_j\consy d_j^{{\dcrit}-{\dcritb}}u_j^{{\dcritb}-1}
&&\text{on }\exa_j\cap\partial\cil.
\end{aligned}
\right.
\]

Following the exhaustion argument of Wei \cite{Wei2019},
we construct solutions on the sets $\exa_j$. The main estimate, proved by Moser iteration in Proposition~\ref{r12}, is
\[
\sup_{j\ge j_0}\|u_j\|_{L^\infty(\exa_{j-1})}<\infty.
\]
Together with local elliptic regularity and the strict inequality $Y_{\consy}(\cil)<Y_{\consy,\infty}(\cil)$, this estimate gives a nonzero limit along a subsequence and yields the following result.
\begin{theorem}\label{r25}
Under the assumptions of Definition~\ref{bg} and \eqref{r2}, with $\cil$ connected, $\partial\cil$ noncompact, and $\consy\ge0$, there exist a function $u$ and a constant $\overline d\ge0$ such that
\[
u\in C^\infty(\cil)\cap L^{\dcrit}(\cil)\cap L^\infty(\cil),
\qquad u>0\text{ on }\cil,
\]
and
\begin{equation}\label{r26}
\left\{\begin{aligned}
\mathcal L_gu&=Y_{\consy}(\cil)u^{\dcrit-1}
&&\text{in }\operatorname{int}\cil,\\
\mathcal B_gu&=Y_{\consy}(\cil)\consy\overline d^{\dcrit-\dcritb}u^{\dcritb-1}
&&\text{on }\partial\cil.
\end{aligned}\right.
\end{equation}
If $\consy>0$, then, along a subsequence,
\[
\|u_j\|_{L^{\dcritb}(\exa_j\cap\partial\cil)}\longrightarrow\overline d,\qquad 0<\overline d\le\consy^{-1/\dcrit}.
\]
For $\consy=0$, we set $\overline d=0$.
\end{theorem}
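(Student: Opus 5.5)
The proof follows Wei's exhaustion scheme. It takes as given Lemma~\ref{l5} ($\lambda_j\searrow Y_{\consy}(\cil)$), Proposition~\ref{r29} (existence of the nonnegative minimizers $u_j$ on $\exa_j$) and Proposition~\ref{r12} (the uniform bound $\sup_{j\ge j_0}\|u_j\|_{L^\infty(\exa_{j-1})}<\infty$).

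\textbf{Step 1: local compactness and passage to the limit.} Fix a compact set $K\subset\cil$. For $j$ large, $K\subset\exa_{j-1}$. The $u_j$ are bounded in $L^\infty(K)$ by Proposition~\ref{r12} and bounded in $W^{1,2}(\exa_{j-1})$, since $E_g(u_j)=\lambda_j$ and the constraint controls the curvature terms via bounded geometry. The right-hand sides of the Euler--Lagrange system are therefore uniformly bounded, with coefficients $\lambda_j$ and $\lambda_j\consy d_j^{\dcrit-\dcritb}$ bounded because $d_j\le\consy^{-1/\dcrit}$ by the constraint.

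Local $L^p$ estimates for the Neumann/Robin problem give uniform $W^{2,p}$ bounds, hence uniform $C^{1,\alpha}$ bounds on compacts. Schauder estimates, using the equation bootstrapped with the boundary condition, then give uniform $C^{2,\alpha}_{\loc}$ bounds. A diagonal subsequence yields $u_j\to u$ in $C^2_{\loc}$ and $d_j\to\overline d\in[0,\consy^{-1/\dcrit}]$. Passing to the limit in the equations gives \eqref{r26} with $u\ge0$. By Fatou, $\|u\|_{L^{\dcrit}}\le1$, and $u\in L^\infty$ because the bound on $\exa_{j-1}$ is uniform in $j$. Smoothness follows by bootstrapping.

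\textbf{Step 2: nontriviality. This is the main obstacle.} We must exclude $u\equiv0$, which is where $Y_{\consy}(\cil)<Y_{\consy,\infty}(\cil)$ enters. Suppose $u\equiv0$. Then $u_j\to0$ in $C^1_{\loc}$, so all mass escapes to infinity. Take a cutoff $\eta$ vanishing on a large compact $K_i$ and equal to $1$ outside $K_{i+1}$. Test the minimality and the equation with $\eta^2u_j$; the error terms are supported in $K_{i+1}$ and tend to $0$.

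This gives
\[
\lambda_j\ge Y_{\consy}(\cil\setminus K_i)\,\bigl(\|\eta u_j\|_{\dcrit}^{\dcrit}+\consy\|\eta u_j\|_{\dcritb}^{\dcrit}\bigr)^{2/\dcrit}-o(1),
\]
and the normalization inside the parentheses tends to $1$. Letting $j\to\infty$ and then $i\to\infty$ yields $Y_{\consy}(\cil)\ge Y_{\consy,\infty}(\cil)$, a contradiction. One needs care with the mixed normalization $(a+\consy b^{\dcrit/\dcritb})$, but the local vanishing makes both pieces concentrate outside $K_{i+1}$, so this works. Positivity $u>0$ then follows from the strong maximum principle and the Hopf lemma applied to $\mathcal L_g u+c u\ge0$ with a Robin boundary condition, using connectedness of $\cil$.

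\textbf{Step 3: $\overline d>0$ when $\consy>0$.} If $\overline d=0$, the boundary condition becomes $\mathcal B_gu=0$. I would then compare energies. Testing the equation with $u$ and using Fatou gives $E_g(u)\le Y_{\consy}(\cil)\|u\|_{\dcrit}^{\dcrit}$. Hence
\[
Y_{\consy}(\cil)\le \frac{E_g(u)}{(\|u\|_{\dcrit}^{\dcrit}+\consy\|u\|_{\dcritb}^{\dcrit})^{2/\dcrit}}<\frac{E_g(u)}{\|u\|_{\dcrit}^{2}}\le Y_{\consy}(\cil)\|u\|_{\dcrit}^{\dcrit-2}\le Y_{\consy}(\cil),
\]
where the strict inequality holds because $u>0$ on $\partial\cil$. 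This is contradictory when $Y_{\consy}(\cil)>0$. The cases $Y_{\consy}(\cil)\le0$ need a separate sign argument.

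More robustly, the Brezis--Lieb splitting from Step 2 applied to $u_j-u$ shows that $u$ is itself a minimizer with full normalization, $\|u\|_{\dcrit}^{\dcrit}+\consy\|u\|_{\dcritb}^{\dcrit}=1$, and that $u_j\to u$ strongly. Then $\|u\|_{L^{\dcritb}(\partial\cil)}=\overline d$, and $u>0$ forces $\overline d>0$.
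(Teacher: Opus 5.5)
Your Steps 1 and 2 follow the paper's route: exhaustion minimizers $u_j$, the uniform bound of Proposition~\ref{r12}, local compactness, and nonvanishing by letting the normalized mass escape to infinity and comparing with $Y_{\consy}(\cil\setminus K)$. The paper does that comparison with the $\sin\psi/\cos\psi$ splitting of $E_g(u_j)$ rather than by testing with $\eta^2u_j$, but the two are equivalent. Positivity then comes from the strong maximum principle and the Hopf lemma. Two minor points in Step 1:
\begin{itemize}
\item The constraint controls $L^{\dcrit}$ and $L^{\dcritb}$ norms, not the $L^2$ norms over the growing balls $\exa_j$ that appear in the curvature terms. So a uniform $W^{1,2}(\exa_{j-1})$ bound does not follow. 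You only need local bounds, and those come from the $L^\infty$ bound by testing with $\xi^2u_j$, as in the paper.
\item With merely bounded Robin data you cannot start from $W^{2,p}$ estimates. You must go $C^{0,\alpha}\to C^{1,\alpha}\to C^\infty$, which is what the paper does via Nittka, Lieberman and Cherrier.
\end{itemize}

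The genuine gap is Step 3, the claim $\overline d>0$. Your energy comparison only covers $Y_{\consy}(\cil)>0$, as you note. Even then it uses $u$ as a competitor for $Y_{\consy}(\cil)$, which requires $E_g(u)$ to be finite and $u$ to be approximable in energy by $C_c^\infty$ functions. Neither is established: $u\in L^{\dcrit}\cap L^\infty$ need not lie in $L^2$ on an infinite-volume manifold, and you have no global gradient bound.

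Your ``more robust'' alternative asserts strong convergence $u_j\to u$ with $\|u\|_{L^{\dcrit}}^{\dcrit}+\consy\|u\|_{L^{\dcritb}}^{\dcrit}=1$. That is a full concentration-compactness statement on a noncompact manifold, and nothing in your argument proves it. Your Step 2 only rules out $u\equiv0$ by contradiction; it contains no Brezis--Lieb splitting. Such a splitting would also have to handle cross terms like $\int R_g u\,(u_j-u)$ and the non-additivity of the mixed normalization.

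None of this is needed. Since $u_j\to u$ pointwise on $\partial\cil$ (you even claim $C^2_{\loc}$ convergence), Fatou's lemma gives
\[
0<\|u\|_{L^{\dcritb}(\partial\cil)}\le\liminf_{j\to\infty}\|u_j\|_{L^{\dcritb}(\exa_j\cap\partial\cil)}=\overline d .
\]
The first inequality uses $u>0$ on the nonempty boundary. This works for every sign of $Y_{\consy}(\cil)$. It is all the paper's argument requires: the paper records $\overline d\in[0,\consy^{-1/\dcrit}]$, proves $u>0$, and $\overline d>0$ follows exactly this way.
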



%
%
%

%
%
%
%
%

\section{Preliminaries}\label{2}
We use the following notation.
\begin{enumerate}[label=$(\roman*)$]
\item For $a\in \R$, define $a^+:= \max \{a,0\}$ and $a^-:=\max\{-a,0\}$.

\item Points of $\cil$ are denoted by $x,y,z$, and points of $\R^n$ by $\xeuc$, $\yeuc$ and $\zeuc$. Points of $\R^{n-1}$ are denoted by $\xeuc^\prime$, $\yeuc^\prime$ and $\zeuc^\prime$.

\item $B_r^n := \{\xeuc \in \R^n \mid |\xeuc|<r\}$, $B_r^{n-1} := \{\xeuc^\prime \in \R^{n-1} \mid |{\xeuc^\prime}|<r\}$
$$
Q_{i, r}:= \left\{\begin{aligned}&B_r^{n-1} \times[0, r) && \text { if } i=1 \\ 
&B_r^n && \text { if } i=0\end{aligned}\right.
$$

\item $\mathbb R^n_+:=\{(\xeuc ^\prime,t)\in\mathbb R^{n-1} \times\mathbb R \mid t>0\}$

\item For $A\in\mathbb R^{n\times n}$, the operator norm used below is
\begin{equation*}
 \|A\|_{\mathrm{op}}
 :=\sup_{\xi\neq0}\frac{|A\xi|}{|\xi|}
\end{equation*}
The vector norm is Euclidean.

\item Matrix inequalities $aI\leq G\leq bI$ denote
\[
 a|\xi|^2\leq\xi^TG\xi\leq b|\xi|^2
 \quad\forall\xi\in\mathbb R^n.
\]

\item $q^\ast := \frac{nq}{n-q}$ and $q^\#=\frac{q(n-1)}{n-q}$.

\item We write $C(a_1,a_2, \ldots,a_m)$ for a constant $C$ that depends on $a_i$, $i=1,\ldots, m$. When any of these quantities depends on the constants $\consgeo _C, \consgeo _ \partial, \consgeo _0$ {and the curvature and second-fundamental form bounds $c_k$}   in Definition~\ref{bg}, we use the abbreviated notation $C(\ldots , g)$.

\item The gradient on $(\cil , g) $ is denoted by $\nabla _g$, whereas the gradient on $\R^n$ is denoted simply by $\nabla$.
\end{enumerate}

\subsection{Uniform coordinate charts and partitions of unity}
Throughout this subsection, we assume that $n\ge 2$ and $q>1$.

\begin{definition}[{\cite[Definition 2.2]{zbMATH01606240}}] \label{bg}
Let $({\cil}^n,g)$ be a smooth Riemannian manifold with boundary 
$\partial {\cil}\neq\varnothing$. Every connected component, including its boundary, is complete with respect to
its intrinsic length distance.

The manifold has {\bf bounded geometry} if the following conditions hold:
\begin{enumerate}[label=$(\roman*)$]
 \item \label{c}  There exists $\consgeo _C>0$
such that $F:\partial {\cil}\times[0,{\consgeo _C})\to {\cil}$, defined by $ F(z,t):=\exp_z(-t\nu(z))$,  is a diffeomorphism onto its image;

\item \label{b} The injectivity radius $\consgeo _{\partial} := r_{\operatorname{inj}}(\partial {\cil})$ of $\partial {\cil}$  is positive;
 
 \item \label{i}  There exists ${ 0<{\consgeo _0}<{\consgeo _C}}$ such that $\exp_x :B(0,r)\subset T_x{\cil}\to {\cil}\setminus\partial {\cil}$ is a diffeomorphism onto its image for $0<r\leq {\consgeo _0}$, $x\in {\cil}\setminus N(r)$, where $N(r)=F(\partial {\cil}\times[0,r])$;
 
 \item \label{cv} $\|\nabla^k\operatorname{Rm}_g\|_{L^\infty({\cil})}
  +\|(\nabla^h)^k\mathrm{II}\|_{L^\infty(\partial {\cil})}
 \leq c_k<\infty$, $k=0,1,2,\ldots$.
\end{enumerate}
{    Here $h$ is the metric induced by $g$ on $\partial\cil$, and $\nabla^h$ is its Levi--Civita connection.}

\end{definition}

For all  $r>0$ and $i=0,1$, define
\begin{equation}\label{17}
Q_{i,r}:=\left\{\begin{aligned}
&B^{n-1}_r \times [0,r) & & \text { if } i=1\\
& B^n_r & & \text { if } i=0
\end{aligned}
\right.
\end{equation}
\begin{lemma} \label{chs}
There exist $r_1\in (0, 2^{-1}\min \{{\consgeo _0}, {\consgeo _C}\})$ (see Definition~\ref{bg}) and constants $C_1,C_0>0$, independent of $z\in {\cil}$, and charts 
\begin{equation}\label{ch}
\left\{\begin{aligned} \Phi_{1,z}  : (\xeuc',t)\in Q_{1, r_1}&\mapsto F\bigl(\exp_z^{\partial {\cil}}(E _{1,z} \xeuc'),t\bigr) & & \text { if } z\in \partial \cil,\\
\Phi_{0,z}  : \xeuc\in Q_{0, r} & \mapsto \exp_z \left(E _{0,z} \xeuc\right), \ r= 2^{-1} \min \{  r_1, d_g (z, \partial \cil) \}  & & \text { if }  z\in \cil  \setminus  \partial \cil
\end{aligned}\right.
\end{equation}
where $E_{1,z}:\mathbb R^{n-1}\to T_z\partial {\cil}$ and $E_{0,z}:\mathbb R^{n}\to T_z {\cil}$  are linear isometries. In the coordinates given by $\Phi _{i,z}$, $i=0,1$, we have 
\begin{gather}
 [g_{ij}](0,0)=I,\qquad
 \sup_z\sup_{{\operatorname{dom}\Phi_{h,z}}}|\partial_k g_{ij}|,\quad \sup_z\sup_{{\operatorname{dom}\Phi_{h,z}}}|\partial_k g^{ij}|\leq C_1,\ h=0,1;
 \label{dv}\\
 \|{[g_{ij}]^{-1}}(\xeuc ',t)-I\|_{\mathrm{op}}, \quad \|[g_{ij}](\xeuc ',t)-I\|_{\mathrm{op}}
 \leq C_0(|\xeuc '|+t),\qquad (\xeuc ^\prime ,t)\in Q_{1,r_1}, \label{cl}\\
 \|{[g_{ij}]^{-1}}(\xeuc)-I\|_{\mathrm{op}}, \quad \|[g_{ij}](\xeuc )-I\|_{\mathrm{op}}
 \leq C_0|\xeuc|,\qquad \xeuc \in Q_{0, \frac{ \min \{  r_1, d_g (z, \partial \cil)}{2}},\label{12}
\end{gather}
where $C_0=n{    \sqrt n}\,C_1$.

For ${\radpa}>1$, set
\begin{equation}\label{rl}
 r_{\radpa}:=
 \min\left\{r_1,
       \frac{1-{\radpa}^{-1}}{4(1+C_0)}\right\}>0.
\end{equation}
If $x\in \cil \setminus \partial \cil$, we define
\begin{equation}\label{13}
 r_{x,{\radpa}}:=
 \min\left\{\frac{r_1}{2} , \frac{d_g (x, \partial \cil)}{2},  \frac{1-{\radpa}^{-1}}{4(1+C_0)}\right\}>0.
\end{equation}

On $Q_{1,r_{\radpa}}$ or $Q_{0,r_{x,{\radpa}} }$, we have
\begin{equation}\label{mt}
 {\radpa}^{-1}I\leq [g_{ij}] \leq{\radpa} I,
 \qquad {\radpa}^{-1}I\leq [g_{ij}]^{-1}\leq{\radpa} I.
\end{equation}
\end{lemma}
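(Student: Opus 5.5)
We prove Lemma~\ref{chs} from the uniform control of the metric coefficients in Fermi and normal coordinates that bounded geometry provides. The constants $r_1$, $C_1$ will come from Definition~\ref{bg}, items \ref{c}--\ref{cv}. The constant $C_0$ and the radii $r_\radpa$, $r_{x,\radpa}$ then follow from elementary matrix estimates.

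\emph{Step 1: the charts and the derivative bounds \eqref{dv}.} For $z\in\partial\cil$ we take $\Phi_{1,z}$ to be the composition of boundary normal coordinates with the collar map $F$. It is well defined on $Q_{1,r_1}$ once $r_1<\min\{\consgeo_\partial,\consgeo_C\}$, by \ref{c} and \ref{b}. For interior $z$ we use geodesic normal coordinates on a ball of radius $2^{-1}\min\{r_1,d_g(z,\partial\cil)\}$; this radius is admissible by \ref{i}, or directly because the ball does not meet $\partial\cil$. The first-derivative bounds on $g_{ij}$ are the standard consequence of curvature bounds via Jacobi field comparison. In the boundary case, $\partial_t g$ is governed by the Riccati equation for the shape operator of the parallel hypersurfaces $F(\partial\cil\times\{t\})$, with initial datum $\mathrm{II}$. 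The tangential derivatives are governed by $\operatorname{Rm}_h$, which the Gauss equation controls in terms of $\operatorname{Rm}_g$ and $\mathrm{II}$, and by the $k=0,1$ bounds in \ref{cv}. All of these estimates depend only on $c_0,c_1$ and the radii, so $C_1$ is uniform in $z$. At the origin, $[g_{ij}]=I$ because $E_{i,z}$ are isometries and $\partial_t$ is the unit normal. The bound for $\partial_k g^{ij}=-g^{ia}(\partial_k g_{ab})g^{bj}$ follows once $g$ is uniformly comparable to $I$. That comparability comes from the same Rauch/Riccati comparison after shrinking $r_1$.

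\emph{Step 2: \eqref{cl} and \eqref{12}.} Integrate $\partial_kg_{ij}$ along the segment from $0$ to $\check x$. This gives $|g_{ij}(\check x)-\delta_{ij}|\le C_1\sqrt n|\check x|$. Bounding the operator norm by the Frobenius norm, $\|A\|_{\mathrm{op}}\le n\max|a_{ij}|$, we get $\|[g_{ij}]-I\|_{\mathrm{op}}\le n\sqrt n\,C_1|\check x|=C_0|\check x|$. On $Q_{1,r_1}$ we have $|\check x|\le|\check x'|+t$. The same argument applies to $g^{ij}$, using $g^{ij}(0)=\delta_{ij}$.

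\emph{Step 3: \eqref{mt}.} On $Q_{1,r_\radpa}$ we have $|\check x'|+t\le 2r_\radpa$, hence
\[
\|[g_{ij}]-I\|_{\mathrm{op}}\le 2C_0r_\radpa\le\tfrac{1-\radpa^{-1}}{2}.
\]
Therefore $[g_{ij}]\ge(1-\tfrac{1-\radpa^{-1}}{2})I\ge\radpa^{-1}I$. In the other direction, $[g_{ij}]\le(1+\tfrac{1-\radpa^{-1}}{2})I\le\radpa I$, because $1+\tfrac{1-s}{2}\le 1/s$ for $s=\radpa^{-1}\in(0,1)$. The same holds for the inverse, and the interior case is identical with $r_{x,\radpa}$.

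The main obstacle is Step 1: the uniformity of the first-derivative bounds near the boundary in Fermi coordinates, since $\partial_tg$ and the tangential derivatives need separate comparison arguments. There we would cite the standard bounded-geometry estimates of \cite{zbMATH01606240} rather than reprove them.
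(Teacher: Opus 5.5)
Your proposal is correct and follows the paper's proof essentially step by step. The paper also takes the uniform bounds on $g_{ij}$, $g^{ij}$ and their first derivatives from \cite[Theorem~2.5]{zbMATH01606240}, with $r_1=\frac12\min\{\consgeo_C,\consgeo_0/2,\consgeo_\partial,R_1,R_2,R_3\}$; it then integrates along rays and uses $\|A\|_{\mathrm{op}}\le n\max_{k,l}|a_{kl}|$ to get \eqref{cl}--\eqref{12}, and deduces \eqref{mt} from $\|[g_{ij}]-I\|_{\mathrm{op}}\le 2C_0r\le\frac12(1-\radpa^{-1})$.

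The differences are cosmetic: the paper passes to the inverse via eigenvalues, while you reapply the perturbation bound to $[g_{ij}]^{-1}$. You should, however, state explicitly the choice $r_1<\frac12\min\{\consgeo_0,\consgeo_C\}$. Also note that for interior charts it is Definition~\ref{bg}\,\ref{i}, not merely the ball avoiding $\partial\cil$, that makes $\exp_z$ a diffeomorphism.
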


\begin{proof}

\textbf{Step 1.}
Let $z\in  \cil$. Set $c =\max\{1,c_0,c_1\}$, where the constants are given in Definition~\ref{bg}\,\ref{cv}. By
\cite[Theorem 2.5 (a1), (b1), (c)]{zbMATH01606240},
there exist uniform constants $R_1,R_2, R_3>0$ and $C_1>0$ for which $\Phi _{1,z} : B^{n-1}_{r_1} \times [0,r_2) \to \cil$, with $(r_1, r_2)\in (0,R_1]\times (0,R_2]$, is a normal boundary chart, and $\Phi _{0,z} : B^{n}_{r_3} \to \cil\setminus \partial \cil$ is a Gaussian chart for $r_3\in (0,\min\{R_3 , {\consgeo _0} \}]$ and $z\in \cil \setminus F(\partial \cil \times [0,r_3])$. Furthermore, in the coordinates $\Phi _{i,z}$, $i=0,1$, we have
\[
|g_{ij}|, \ |g^{ij}|, \ |\partial_k g_{ij}|, \ |\partial_k g^{ij}| \leq C_1,
 \qquad 1\leq i,j,k\leq n,
\]

Using Definition~\ref{bg}, choose
\[
 r_1=\frac12\min\left\{{\consgeo _C}, \frac{\consgeo _0}{2}, {\consgeo _{\partial}} , R_1 , R_2,R_3\right\}.
\]
This proves the derivative bound in \eqref{dv}.

\medskip

\noindent \textbf{Step 2.} We first prove \eqref{cl} for $z\in\partial\cil$. For $(\xeuc ^\prime,t)\in Q_{{1},r_1}$ and $0\leq s\leq1$,
$(s\xeuc ^\prime,st)\in Q_{{1},r_1}$. Therefore
\begin{align*}
 g_{ij}(\xeuc ^\prime,t)-\delta_{ij}
 &=\int_0^1\frac{\din }{\din s}g_{ij}(s\xeuc ^\prime,st)\,\din  s\\
 &=\int_0^1\left[
      \sum_{\alpha=1}^{n-1} x^\alpha
           \partial_\alpha g_{ij} (s\xeuc ^\prime,st)
       +t\,\partial_n g_{ij} (s\xeuc ^\prime,st)\right]\, \din s.
\end{align*}

Hence
\begin{equation}\label{en}
 |g_{ij}(\xeuc ^\prime,t)-\delta_{ij}|
 \leq C_1\left(\sum_{\alpha=1}^{n-1}|x^\alpha|+t\right)
 \leq C_1\bigl(\sqrt  {n-1} \,|\xeuc ^\prime|+t\bigr).
\end{equation}

For any matrix $A=[a_{ij}]$ and $|\xi|=1$,
\[
 |(A\xi)_i|
 \leq\max_{k,l}|a_{kl}|\sum_j|\xi_j|
 \leq\sqrt n\max_{k,l}|a_{kl}|,
\]
so
\[
 |A\xi|^2\leq n^2\max_{k,l}|a_{kl}|^2,
 \qquad \|A\|_{\mathrm{op}}\leq n\max_{k,l}|a_{kl}|.
\]
Then \eqref{en} yields
\[
 \|[g_{ij}](\xeuc ^\prime,t)-I\|_{\mathrm{op}}
 \leq nC_1\bigl(\sqrt {n-1}\,|\xeuc ^\prime|+t\bigr)
 \leq C_0 (|\xeuc ^\prime|+t),
\]
where $C_0 = n\sqrt {n} \,C_1$. This is \eqref{cl}. The same argument applies to ${[g_{ij}]^{-1}}$.
{    For an interior chart, the same mean-value and matrix estimates give
\[
\begin{aligned}
\|[g_{ij}](\xeuc)-I\|_{\mathrm{op}},\quad \|[g_{ij}]^{-1}(\xeuc)-I\|_{\mathrm{op}}
&\le nC_1\sum_{k=1}^n|x^k|\\
&\le n\sqrt n\,C_1|\xeuc|=C_0|\xeuc|.
\end{aligned}
\]
This proves \eqref{12}.}

\medskip

\noindent \textbf{Step 3.} Let $0<r\leq r_{\radpa} :=\min \left\{r_1, \frac{1-{\radpa}^{-1}}{4\left(1+C_0\right)}\right\}$. Then
\[
 \|[g_{ij}] -I\|_{\mathrm{op}}\leq2C_0r   
 \leq\frac{C_0}{2(1+C_0)}(1-{\radpa}^{-1})
 \leq\frac12(1-{\radpa}^{-1}).
\]

By Cauchy--Schwarz,
\[
 |\xi^T([g_{ij}] -I)\xi|
 \leq|\xi|\,|(  [g_{ij}] -I)\xi|
 \leq2C_0r   |\xi|^2.
\]

Moreover, since $1-2C_0r   \geq{\radpa}^{-1}$ and  ${\radpa}+{\radpa}^{-1}-2=\frac{({\radpa}-1)^2}{{\radpa}}\geq0$, we have
\[
 1+2C_0r   \leq2-{\radpa}^{-1}\leq{\radpa}.
\]
Thus ${\radpa}^{-1}I\leq [g_{ij}]\leq{\radpa} I$. If
$[g_{ij}](x^\prime ,t) =O\operatorname{diag}(\sigma_1,\ldots,\sigma_n)O^T$, with $O$
orthogonal, then ${\radpa}^{-1}\leq\sigma_i\leq{\radpa}
 $; consequently, $ {\radpa}^{-1}\leq\sigma_i^{-1}\leq{\radpa}$, which proves the inverse bound in \eqref{mt}. {For $0<r\le r_{x,\radpa}$, \eqref{12} gives $\|[g_{ij}]-I\|_{\mathrm{op}}\le C_0r\le(1-\radpa^{-1})/4$, and the same argument proves \eqref{mt} in the interior chart.}\end{proof}

\begin{figure}[H]
    \centering
    \includegraphics[width=0.7\textwidth]{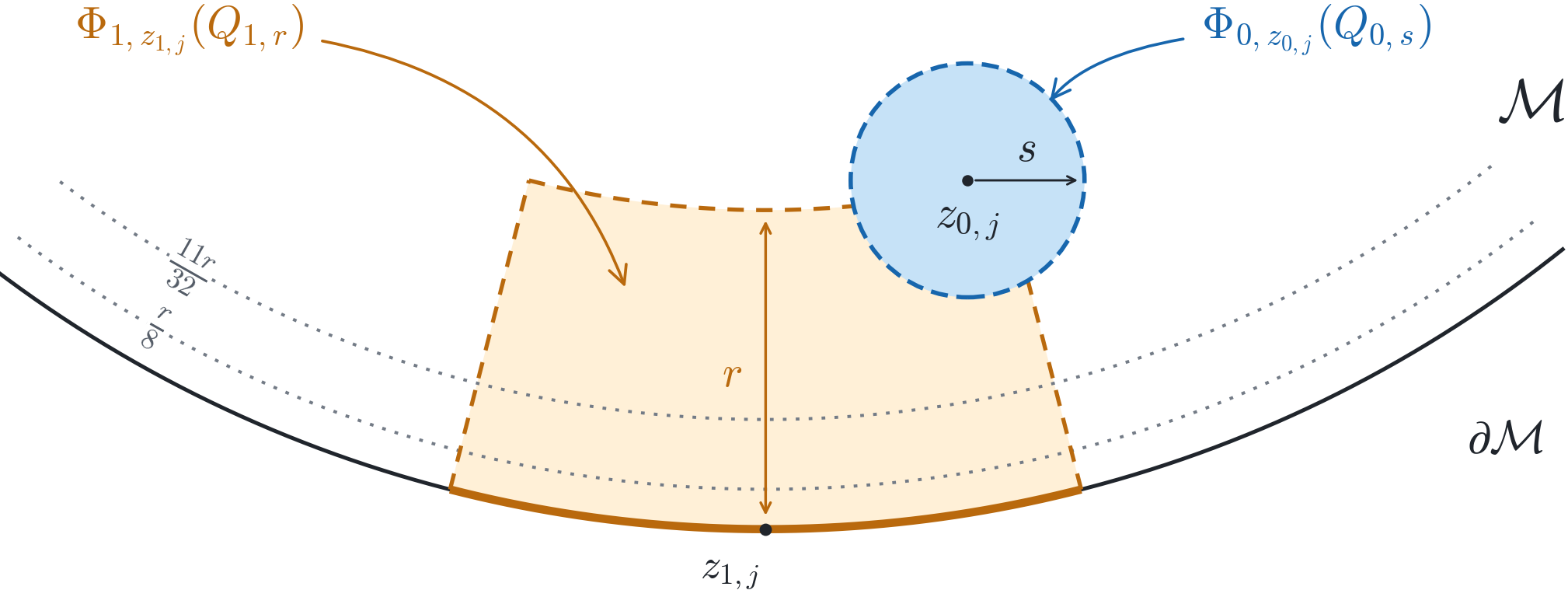}
  
    \label{fig:figura1}
\end{figure}
\begin{lemma}\label{pt}
Set
\[
 s_0:=r_2
 =\min\left\{r_1,\frac{1}{8(1+C_0)}\right\}
\]
where $r_{\radpa}$, with ${\radpa}=2$, is defined in Lemma~\ref{chs}.

Fix ${\radpa}>1$ and
\begin{equation}\label{ms}
 0<r\leq\min\{r_{\radpa},s_0/4\} \qquad  0<s\le \frac{r}{32},
\end{equation}

There exist at most countable index sets $\mathtt{J}_0,\mathtt{J}_1\subset\mathbb N$,
distinct centers $(z_{1,j}, z_{0,i}) \in \partial \cil \times \cil \setminus \partial \cil$, $(j,i)\in \mathtt{J}_1 \times \mathtt{J}_0$, and functions $\eta_{h,j}\in C_c^1( \cil )$ such that
\begin{gather}
d_g(z_{0,j} , \partial \cil) > \frac{3r}{8} \quad j\in \mathtt{J}_0 \label{21}\\
 0\leq\eta_{1,j}\leq1,\quad \operatorname{supp}\eta_{1,j} = \Phi_{1,z_{1,j}} (\overline{Q_{1,3r/4} }), \quad |\nabla_g\eta_{1,j}|\leq   {\bf c}_1 ({\radpa} , n ,  r,s) \quad j\in\mathtt{J}_1,  \label{sp}\\
 0\leq\eta_{0,j}\leq1,\quad \spt \eta_{0,j}=  \Phi_{0,z_{0,j}} (\overline{Q_{0,3s/4} }), \quad |\nabla_g\eta_{0,j}|\leq   {\bf c}_1 ({\radpa} , n ,  r,s) \quad j\in\mathtt{J}_0,  \label{47}  \\
 \sum_{j\in \mathtt{J}_1}\eta_{1,j}^q + \sum_{j\in \mathtt{J}_0}\eta_{0,j}^q =1\text{ on } \cil ,\qquad
\sum_{j\in \mathtt{J}_1}|{    \nabla_g}\eta_{1,j}|^q + \sum_{j\in \mathtt{J}_0}|{    \nabla_g}\eta_{0,j}|^q \leq   {\bf c}_2 ({\radpa} , n , q ,  r , s).\label{pw}
\end{gather}
The supports form a locally finite family in $\cil$ and there is $N_\ast>0$ such that
\begin{equation}\label{19}
\sum _{j\in \mathtt{J}_1} {\bf 1}_{\spt \eta _{1,j}}  + \sum _{j\in \mathtt{J}_0} {\bf 1}_{\spt \eta _{0,j}} \le N_\ast
 \end{equation}
 
 Moreover, we have
 \begin{equation}\label{70}
 \cil = \bigcup _{j\in \mathtt{J}_1} \Phi _{1,z_{1,j}} (Q_{1, r/2}) \cup \bigcup _{j\in \mathtt{J}_0} \Phi _{0,z_{0,j}} (Q_{0, {    s}/2}) .
 \end{equation}
\end{lemma}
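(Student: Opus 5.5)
The plan is the usual construction of a uniformly locally finite partition of unity from maximal separated nets, carried out separately near the boundary and in the interior, followed by normalization to get $\sum\eta^q=1$.

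\textbf{Step 1: nets and covering.}
Choose a maximal $r/4$-separated subset $\{z_{1,j}\}_{j\in\mathtt J_1}\subset\partial\cil$ with respect to the intrinsic distance of $(\partial\cil,h)$. By Definition~\ref{bg}\ref{c} and Lemma~\ref{chs}, every point of the collar $N(3r/8)=F(\partial\cil\times[0,3r/8])$ lies in some $\Phi_{1,z_{1,j}}(Q_{1,r/2})$. Indeed, write the point as $F(z',t)$ with $t\le 3r/8<r/2$; by maximality some $z_{1,j}$ has $d_h(z',z_{1,j})<r/4$, and since $r\le r_1\le\consgeo_\partial$, $z'=\exp^{\partial\cil}_{z_{1,j}}(E_{1,z_{1,j}}\xeuc')$ with $|\xeuc'|<r/4$.

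Next choose a maximal $s/4$-separated set $\{z_{0,i}\}_{i\in\mathtt J_0}$ in $\{x: d_g(x,\partial\cil)>3r/8\}$. This gives \eqref{21} directly. For these points $d_g(z_{0,i},\partial\cil)/2>3r/16\ge s$, so the interior charts $\Phi_{0,z_{0,i}}$ are defined on $Q_{0,s}$ and \eqref{mt} holds there. Every interior point outside the collar is within $s/4<s/2$ of some $z_{0,i}$, and \eqref{mt} turns $g$-distance into coordinate distance up to the factor $\radpa^{1/2}$. This gives \eqref{70}, after possibly shrinking the separation constant to $s/(4\radpa)$.

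\textbf{Step 2: bounded overlap \eqref{19}.}
This is a volume-packing argument. The disjoint balls of radius $r/8$ (respectively $s/8$) around the centers have volume bounded below uniformly, because \eqref{mt} compares $\din v_g$ with Lebesgue measure in the charts. Any support $\Phi(\overline{Q_{i,3r/4}})$ meeting a fixed point $x$ has its center in $B_g(x,2\radpa r)$, whose volume is bounded above uniformly. For this upper bound I would cover $B_g(x,2\radpa r)$ by boundedly many charts, or use bounded curvature together with Bishop--Gromov in charts. Since $r\le s_0/4$, the relevant balls stay inside the chart domains. The ratio of the two volume bounds gives $N_\ast=N_\ast(\radpa,n,r,s)$, and local finiteness follows.

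I expect the main obstacle to be this uniform volume comparison near the boundary, where balls are not coordinate balls. I would handle it via the Fermi charts $\Phi_{1,z}$ and the two-sided bounds \eqref{mt}.

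\textbf{Step 3: cutoffs and normalization.}
Fix a Euclidean cutoff $\psi_i$ with $\psi_i=1$ on $Q_{i,1/2}$, support equal to $\overline{Q_{i,3/4}}$ (with positivity in the interior), and $|\nabla\psi_i|\le C(n)$. Set $\tilde\eta_{1,j}=\psi_1(\cdot/r)\circ\Phi_{1,z_{1,j}}^{-1}$ and $\tilde\eta_{0,i}=\psi_0(\cdot/s)\circ\Phi_{0,z_{0,i}}^{-1}$, extended by zero. By \eqref{mt}, $|\nabla_g\tilde\eta|\le \radpa^{1/2}C(n)\max\{r^{-1},s^{-1}\}$.

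Let $S=\sum\tilde\eta^q$. By \eqref{70}, $S\ge1$ everywhere, and by \eqref{19}, $S\le N_\ast$. Define $\eta=\tilde\eta\,S^{-1/q}$. Then $\sum\eta^q=1$, and the supports are unchanged. Moreover $|\nabla_g\eta|\le|\nabla_g\tilde\eta|+q^{-1}\tilde\eta\,S^{-1/q-1}|\nabla_g S|$ with $|\nabla_g S|\le qN_\ast\max|\nabla_g\tilde\eta|$, which gives ${\bf c}_1$. Summing the $q$-th powers of at most $N_\ast$ nonzero terms gives ${\bf c}_2$ in \eqref{pw}.

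Regularity is $C^1$ if $\psi_i$ is smooth and $q>1$, since $S\ge1$. Countability follows from local finiteness and the second countability of $\cil$.
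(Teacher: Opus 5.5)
Your proposal follows the same route as the paper: separated nets on $\partial\cil$ (in the intrinsic distance) and in the region away from the boundary, a volume-packing bound for the overlap, pulled-back Euclidean cutoffs, and the normalization $\eta=\tilde\eta\,S^{-1/q}$, where $S=\sum\tilde\eta^q\ge1$ follows from the covering. The one step to tighten is the boundary overlap count. Your boundary centers are only $d_g^{\partial}$-separated, and on $\partial\cil$ one only has $d_g\le d_g^{\partial}$, so the ambient balls $B_g(z_{1,j},r/8)$ need not be disjoint. The paper avoids this by writing $x=F(z,t)$ and using uniqueness of collar coordinates to get $d_g^{\partial}(z,z_{1,j})\le 3r/4$ for every boundary support containing $x$; it then packs the disjoint intrinsic balls $B_g^{\partial}(z_{1,j},r/8)$ using the $(n-1)$-dimensional bounds \eqref{vl}. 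Equivalently, you can pack the disjoint Fermi sets $\Phi_{1,z_{1,j}}(Q_{1,r/8})$ inside $\Phi_{1,z}(Q_{1,7r/8})$. Either way, no ambient volume upper bound near $\partial\cil$ is needed.
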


\begin{proof}
\textbf{Step 1.} For $z\in\partial \cil$, define
\[
 \Phi _z ^\partial  (\xeuc ^\prime):=\exp_z^\partial({E_{1,z}}\xeuc ^\prime).
\]

Since $r_1<{\consgeo _C} $ and $s_0\leq r_1<r_{\operatorname{inj}} (\partial \cil)$ (see Definition~\ref{bg}), we obtain
\begin{equation}\label{bl}
 d_g ^{\partial}(z,\Phi _z ^\partial  (\xeuc ^\prime))=|\xeuc ^\prime|\quad(|\xeuc ^\prime|<r_1),\qquad
 B_g^\partial(z,t)=\Phi _z ^\partial  (B_t^{n-1})\quad(0<t\leq s_0).
\end{equation}

Apply \eqref{mt} with ${\radpa}=2$ at $t=0$:
\[
 \frac12 I\leq \mathtt{M}_{z}^\partial(\xeuc ^\prime)\leq2I, \qquad |\xeuc ^\prime|<s_0,
\]
where
$$
\mathtt{M}_{z}^\partial(\xeuc ^\prime):=[g_{\alpha\beta}(\xeuc ^\prime,0)]_{\alpha,\beta=1}^{n-1}.
$$

Consequently,
\[
 2^{-(n-1)}\leq\det \mathtt{M}_{z}^\partial(\xeuc ^\prime)\leq2^{n-1}.
\]
Then, by \eqref{bl},
\begin{align*}
 \operatorname{Vol}(B_g^\partial(z,s))
 &=\int_{B_{{s}}^{n-1}}\sqrt{\det \mathtt{M}_{z}^\partial(\xeuc ^\prime)}\, \din \xeuc ^\prime
\end{align*}

Thus, uniformly in $z$,
\begin{equation}\label{vl}
 c s^{n-1}\leq\operatorname{Vol}(B_g^\partial(z,{s}))\leq C s^{n-1},
 \qquad {0<s\leq s_0},
\end{equation}
where $
 c=2^{-(n-1)/2}\omega_{n-1}$ and $C=2^{(n-1)/2}\omega_{n-1}$. {Here $\omega_m$ denotes the volume of the Euclidean unit ball in $\mathbb R^m$.}
 
Similarly, if $x\in\cil\setminus\partial\cil$, we obtain
\begin{equation}\label{15}
\begin{gathered}
 d_g (x,\Phi _{0,x}  (\yeuc))=|\yeuc|, \qquad
 B_g(x,s)=\Phi _{0,x}  (B_s^n)
 \end{gathered}
\end{equation}
where $|\yeuc|,s<r_{x,2}\le\consgeo_0/2$, and $\consgeo_0$ is given in Definition~\ref{bg}.
 
Applying \eqref{mt} with ${\radpa}=2$, we have
\[
 \frac12 I\leq \mathtt{M}_{0,{x}}(\xeuc)\leq2I, \qquad |\xeuc|<r_{x,2},
\]
where
$$
\mathtt{M}_{0,{x}}(\xeuc):=[g_{ij}(\xeuc)]_{i,j=1}^{n}.
$$

Consequently,
\[
 2^{-n}\leq\det \mathtt{M}_{0,{x}}(\xeuc)\leq 2^{n}.
\]
Then, by \eqref{15},
\begin{align*}
 \vol _g(B_g(x,s))
 &=\int_{B_s^{n}}\sqrt{\det \mathtt{M}_{0,{x}} (\xeuc)}\, \din \xeuc .
\end{align*}

Thus, uniformly in $x$,
\begin{equation}\label{14}
 c s^{n}\leq\vol _g (B_g(x,s))\leq C s^{n},
 \qquad 0<s <r_{x,2},
\end{equation}
where $
 c=2^{-n/2}\omega_{n}$ and $C=2^{n/2}\omega_{n}$.

 \medskip 
 
\noindent \textbf{Step 2.} Take a countable basis of nonempty open connected sets
$(V_{ m})_{m\geq1}$ in $\partial \cil $ and choose
\[
 b_{ m}\in V_{m},\qquad
 D :=\{b_{ m}  \mid m\geq1\}.
\]
For any nonempty open $U\subset\partial \cil$, some $V_{m}$ satisfies
$V_{m}\subset U$, so $b_{m}\in U$. Therefore
\[
 \overline{D}=\partial \cil.
\]

For $r>0$, define finite sets recursively by
\begin{equation}\label{sl}
 \begin{aligned}
 P_{1}&:=  \{b_{1}\},\\
 P_{m}&:=
\left\{ \begin{aligned}
& P_{m-1}\cup\{b_{m}\}
 & &d_g ^{\partial}(b_{m},z)\geq r/4\quad\forall z\in P_{m-1},\\
 &P_{m-1} & & \text{otherwise},
 \end{aligned}\right.\\
 Z_1&:=\bigcup_{m\geq1}P_{m}.
 \end{aligned}
\end{equation}

For distinct $z,w\in Z_1$, we have
\begin{equation}\label{se}
 d_g ^{\partial}(z,w)\geq r/4.
\end{equation}

Since $Z_1\subset D$, it is at most countable. Enumerate
$Z_1$ without repetition as $(z_{1,j})_{j\in \mathtt{J}_1}$,
where $\mathtt{J}_1\subset\mathbb N$ may be finite.

We claim that
\begin{equation}\label{co}
 \partial \cil=\bigcup_{j\in \mathtt{J} _1}B_g^\partial(z_{1,j},r/2).
\end{equation}

Indeed, let $y\in\partial \cil$. By density, choose $m$ such that
\[
 d_g ^{\partial}(y,b_{m})<r/8.
\]

If $b_{m} \in Z_1$, then $b_{m} =z_{1,j}$ for some $j\in \mathtt{J}_1$ and 
\[
 d_g ^{\partial}(y,z_{1,j})<r/8<r/2.
\]
Otherwise, the failure of the condition in \eqref{sl} gives $z\in P_{m-1}$ such that
\[
 \quad d_g ^{\partial}(b_{m},z)<r/4,
\]
and hence
\[
 d_g ^{\partial}(y,z)\leq d_g ^{\partial}(y,b_{m})+d_g ^{\partial}(b_{m},z)
 <r/8+r/4=3r/8<r/2.
\]
In both cases, $y$ lies in an $r/2$-ball centered at a selected point. This proves \eqref{co}.

Similarly, for $0<s\le\min\{\frac{r}{16},\frac{1}{8(1+C_0)}\}$, there exist $\mathtt{J}_0^\prime\subset\mathbb N$ and a set 
$$
Z_0 = \bigcup _{i\in \mathtt{J}^\prime _0} \{z_{0,i}\} \subset
 \mathscr{U} :=\left\{x\in \cil \mid d_g (x, \partial \cil) > \frac{r}{8}\right\}
 $$ 
such that 
$$
{\mathscr{U}} = \bigcup _{i\in \mathtt{J} _{0}^\prime} B _{g|_{{\mathscr{U}}}} (z_{0,i} , s/2) ,
$$
where $g|_{\mathscr U}$ is the restriction of $g$ to $\mathscr U$. For $x\in\mathscr U$, we have
$$
s\le \min\left\{\frac{r}{16} , \frac{1}{8(1+C_0)}\right\} < r_{x,2} .
$$

Moreover,
\begin{equation*}
\begin{gathered}
d_{g|_{{\mathscr{U}}}}(x,y)\ge s/4\qquad\text{if }x,y\in Z_0,\ {x\ne y} .
\end{gathered}
\end{equation*}

Set
$$
\mathtt{J}_0:= \left\{ i\in \mathtt{J} _0^\prime \mid   d_g(z_{0,i},\partial \cil)  > 3r/8 \right\} .
$$

Finally, 
$$
\cil  = \bigcup _{j\in \mathtt{J} _1} \Phi _{1,z_{1,j}} \left(Q_{1 , r/2} \right) \cup \bigcup _{i\in \mathtt{J} _0 }  B_g(z_{0,i} , s/2).
$$
Observe that $B_g(z_{0,i},s/2)=B_{g|_{\mathscr U}}(z_{0,i},s/2)$ if $d_g(z_{0,i},\partial\cil)>3r/8$.

\medskip

\noindent \textbf{Step 3.} Let $0<r\le \min\{r_{\radpa} , s_0/4\} $ and  $z\in\partial \cil$. We show that 
\begin{equation}\label{4}
\# \mathtt{J}_{1,z}<\infty \qquad  \mathtt{J}_{1,z}:=\{j\in \mathtt{J}_1 \mid  d_g ^{\partial}(z,z_{1,j})<2r\} .
\end{equation}

If $j\in \mathtt{J}_{1,z}$ and $y\in B_g^\partial(z_{1,j},r/8)$, then
\[
 d_g ^{\partial}(z,y)\leq d_g ^{\partial}(z,z_{1,j})+d_g ^{\partial}(z_{1,j},y)<2r+r/8=17r/8.
\]
Therefore
\[
 \bigcup_{j\in \mathtt{J}_{1,z}}B_g^\partial(z_{1,j},r/8)\subset B_g^\partial(z,17r/8).
\]

By \eqref{se},
\begin{gather*}
B_g^\partial(z_{1,j},r/8) \cap B_g^\partial(z_{1,i},r/8) =\emptyset \quad \text { if } i\neq j ,\\
 r/8\leq s_0/32<s_0,
 \qquad 17r/8\leq17s_0/32<s_0.
\end{gather*}
Hence, for every finite $F_0\subset \mathtt{J}_{1,z}$, \eqref{vl} gives
\begin{align*}
 \#F_0\,c(r/8)^{n-1}
 &\leq\sum_{j\in F_0}\operatorname{Vol}_g(B_g^\partial(z_{1,j},r/8))\\
 &=\operatorname{Vol}_g \left(\bigcup_{j\in F_0}B_g^\partial(z_{1,j},r/8)\right)\\
 &\leq\operatorname{Vol}_g (B_g^\partial(z,17r/8))
 \leq C(17r/8)^{n-1}.
\end{align*}
Then
\[
 \#F_0\leq\frac{C}{c}
       \left(\frac{17r/8}{r/8}\right)^{n-1}
 =2^{n-1}17^{n-1}=34^{n-1}=N_1.
\]
If $\mathtt{J}_{1,z}$ contained more than $N_1$ indices, a subset of $N_1+1$ indices
would contradict this bound. Consequently,
\begin{equation}\label{ov}
 \#\mathtt{J}_{1,z}\leq N_1\qquad z\in\partial \cil .
\end{equation}
This proves \eqref{4}.

If $B_g^\partial(z,r/2)\cap B_g^\partial(z_{1,j},r)\neq\varnothing$, take a point $y$ in the
intersection. Then
\[
 d_g ^{\partial}(z,z_{1,j})\leq d_g ^{\partial}(z,y)+d_g ^{\partial}(y,z_{1,j})<r/2+r=3r/2<2r,
\]
so $j\in \mathtt{J}_{1,z}$. Thus every $B_g^\partial(z,r/2)$ meets at most $N_1$ of the balls $B_g^\partial(z_{1,j},r)$.

For $s\le r/32$, if $x\in\cil$ and $\mathtt J_{0,x}:=\{j\in\mathtt J_0\mid d_g(x,z_{0,j})<2s\}\ne\varnothing$, then
\begin{equation} \label{16}
\# \mathtt{J}_{0,x} \le N_0 \qquad  
\end{equation}
 with $N_0:=34^n$.

\medskip

\noindent \textbf{Step 4.}
Define the following functions on $\mathbb R$
\[
 \gamma(t)=
 \begin{cases}e^{-1/t},&t>0,\\0,&t\leq0,\end{cases}
 \qquad
 \beta(t)=\frac{\gamma(3/4-t)}
 {\gamma(3/4-t)+\gamma(t-1/2)}.
\]
We have $\gamma\in C^\infty(\mathbb R)$, with all derivatives zero at $0$, and
\[
 \beta\in C^\infty(\mathbb R),\qquad 0\leq\beta\leq1,
 \qquad \beta=1\ (t  \leq1/2),\qquad \beta=0\ (t  \geq3/4).
\]
Direct differentiation gives
\[
 \beta'(t)=-\frac{
 \gamma'(3/4-t  )\gamma(t -1/2)
 +\gamma(3/4-t )\gamma'(t  -1/2)}
 {\bigl(\gamma(3/4-t  )+\gamma(t  -1/2)\bigr)^2}.
\]
Since $\beta'=0$ outside $[1/2,3/4]$,
\[
C _2:=\max_{t \in[1/2,3/4]}|\beta'(t )|<\infty.
\]
The mean value theorem and
$\beta(1/2)-\beta(3/4)=1$ imply $C_2\geq4$.

For $j\in\mathtt J_i$, $i=0,1$, and $0<r\le\min\{r_{\radpa},s_0/4\}$, define 
\[
 a_{1,j}(\Phi _{1,z_{1,j}}   (\xeuc ^\prime , t )):=
 \left\{\begin{aligned}
& \beta (t/r)\beta(|\xeuc ^\prime|/r) & & \text { if }(\xeuc ^\prime ,t) \in Q_{1,r},\\
 &0 & & \text { on }  \cil\setminus \Phi_{1, z_{1, j}}(Q_{1, r}) .
 \end{aligned}
 \right.
\]
and, for $0<s\le \frac{r}{16} $, define
\[
 a_{0,j}(\Phi _{0,z_{0,j}}   (\xeuc  )):=
 \left\{\begin{aligned}
& \beta(|\xeuc |/s) & & \text{if }{\xeuc}\in Q_{0,s},\\
 &0 & & \text { on }  \cil\setminus \Phi_{0, z_{0, j}}(Q_{0, s}) .
 \end{aligned}
 \right.
\]
Here $Q_{i,r}$ and $\Phi_{i,z_{i,j}}$ are defined in \eqref{17} and \eqref{ch}.

We have
\begin{equation}\label{bp}
 \begin{gathered}
 a_{1,j}\in C_c^\infty( \cil),\quad0\leq a_{1,j}\leq1,\quad
 a_{1,j}=1\text{ on }\Phi _{1,z_{1,j}}    (\overline Q_{1,r/2} ),\\
 \operatorname{supp}a_{1,j}
 =\Phi _{1,z_{1,j}}    (\overline Q_{1,3r/4} )\Subset \Phi _{1,z_{1,j}}    ({Q_{1,r}}).
 \end{gathered}
\end{equation}
and
\begin{equation} \label{18}
 \begin{gathered}
 a_{0,j}\in C_c^\infty( \cil),\quad0\leq a_{0,j}\leq1,\quad
 a_{0,j}=1\text{ on }\Phi _{0,z_{0,j}}    (\overline Q_{0,s/2} ),\\
 \operatorname{supp}a_{0,j}
 =\Phi _{0,z_{0,j}}    (\overline Q_{0,{3s/4}} )\Subset \Phi _{0,z_{0,j}}    ({Q_{0,s}}).
 \end{gathered}
\end{equation}


For $\xeuc ^\prime\neq 0$  and $1\leq\alpha\leq n-1$,
\begin{gather*}
 \partial_\alpha(a_{1,j}\circ\Phi _{1,{z_{1,j}}}  )({\xeuc ^\prime,t})
 =\frac{\beta(t/r)\beta'(|\xeuc ^\prime|/r)}{r}\frac{x^\alpha}{|\xeuc ^\prime|},\\
 \sum_{\alpha=1}^{n-1}
 |\partial_\alpha(a_{1,j}\circ\Phi _{{1,}z_{1,j}}  )|^2
 =\frac{ \beta(t/r)^2|\beta'(|\xeuc ^\prime|/r)|^2}{r^2}\leq\frac{C_2^2}{r^2}.
\end{gather*}
At $\xeuc ^\prime=0$, all these derivatives vanish.

{For the interior cutoffs, \eqref{21} and \eqref{ms} give $s\le r/32<d_g(z_{0,j},\partial\cil)/2$, $s\le r_1/2$ and $s\le(1-\radpa^{-1})/[4(1+C_0)]$. Thus $s\le r_{z_{0,j},\radpa}$, as required for \eqref{mt}.}
Writing $v=a_{i,j}\circ\Phi_{i,z_{i,j}}$ and using \eqref{mt}, we obtain
\begin{equation}\label{bgr}
 |\nabla_ga_{i,j}|^2\circ\Phi _{i,z_{i,j}}
 =(\nabla v)^T [g_{h,k}]^{-1}\nabla v
 \leq{\radpa}|\nabla v|^2
 \leq \left\{ \begin{aligned}
 &  2{\radpa} C_2^2/r^2 &&i=1,\\
  & {\radpa} C_2^2/s^2&&i=0.
 \end{aligned} \right.
\end{equation}

\medskip

\noindent \textbf{Step 5.} Fix $x\in\cil$. If $B_g(x,1)\cap \spt a _{1,j} \neq \emptyset $, then  
\begin{align*}
d_g(x,z_{1,j})\le & d_g(x,y) + d_g(y,z_{1,j}) \\
\le & 1+ d_g^{\partial} (z_{1,j} ,z) + t \\
\le & 1+ 3r/2 ,
\end{align*}
where $y=F(z,t)\in B_g(x,1)\cap\spt a_{1,j}$. 

Set 
$$
\mathcal K_x := \left\{ z\in \partial \cil \mid d_g(x,z) \le 1+3r/2\right\} \ni z_{1,j} .
$$

For each $\bar z\in\mathcal K_x$, the set $B_g^\partial(\bar z,r/2)$ contains at most ${N_1}$ centers $z_{1,j}\in B_g^\partial(\bar z,r/2)$, by \eqref{ov}.
Compactness supplies $\bar z_1,\ldots,\bar z_{n_1}\in\mathcal K_x$ with
\[
 \mathcal K_x\subset\bigcup_{i=1}^{n_1} B_g^\partial(\bar z_{i},r/2).
\]
This implies that $B_g(x,1)$ meets at most $n_1N_1$ of the supports
$\operatorname{supp}a_{1,j}$.

If $B_g(x,1)\cap \spt a _{0,j} \neq \emptyset $, then  
\begin{align*}
d_g(x,z_{0,j})\le & d_g(x,y) + d_g(y,z_{0,j}) \\
\le & {1+3s/4\le}\ 1+3r/4 ,
\end{align*}
where $y\in B_g(x,1)\cap\spt a_{0,j}$.

We have $
z_{0,j} \in \overline{B_g(x, 1+3s/4)} $. For each $\bar x\in \overline{B_g(x, 1+3s/4)} $, the set $B_g (\bar x,s/2)$   contains at most $N_0$ centers $z_{0,j}\in B_g (\bar x,s/2)$, by {\eqref{16}}.
Compactness supplies $\bar x_1,\ldots,\bar x_{n_2}\in\overline{B_g(x, 1+3s/4)} $ with
\[
\overline{B_g(x, 1+3s/4)} \subset\bigcup_{i=1}^{n_2}{B_g}(\bar x_i,s/2).
\]
This implies that $B_g(x,1)$ meets at most $n_2N_0$ of the supports
$\operatorname{supp}a_{0,j}$.

\medskip

\noindent \textbf{Step 6.}
{At any point $x$ in a boundary support, write $x=F(z,t)$. Uniqueness of the collar coordinates shows that the same $z$ works for every boundary support containing $x$, and each corresponding center satisfies $d_g^\partial(z,z_{1,j})\le3r/4<2r$. By \eqref{ov}, there are at most $N_1$ such supports. Each interior support containing $x$ has $d_g(x,z_{0,j})\le3s/4<2s$, so \eqref{16} bounds their number by $N_0$. Consequently,
\[
 \sum_{j\in\mathtt J_1}{\bf1}_{\spt a_{1,j}}(x)
 +\sum_{j\in\mathtt J_0}{\bf1}_{\spt a_{0,j}}(x)
 \le N_0+N_1=:N_\ast.
\]
}
Using locally finite sums, define
\[
 S(x):=\left(
 \sum_{j\in\mathtt J_1}a_{1,j}(x)^q
 +\sum_{j\in\mathtt J_0}a_{0,j}(x)^q
 \right)^{1/q}.
\]

{The covering in Step~2 and \eqref{bp}, \eqref{18} imply that, at every $x\in\cil$, at least one $a_{h,j}(x)$ equals one. Thus $S(x)^q\ge1$.}
We have
\[
S(x)\geq1 ,\quad S\in C^1(\cil),\quad 1\leq S(x)
 \leq(N_0+N_1)^{1/q}.
\]

For $h=0,1$ and $j\in\mathtt J_h$, define
\[
\eta_{h,j}(x):=\frac{a_{h,j}(x)}{S(x)}.
\]

Then
\[
\begin{gathered}
 \eta_{h,j}\in C_c^1(\cil),\quad
 0\leq\eta_{h,j}\leq a_{h,j}\leq1,\\
 \operatorname{supp}\eta_{h,j}
 =\operatorname{supp}a_{h,j},\\
 \operatorname{supp}\eta_{1,j}\Subset
 \Phi_{1,z_{1,j}}(Q_{1,r}),\\
 \operatorname{supp}\eta_{0,j}\Subset
 \Phi_{0,z_{0,j}}(Q_{0,s}).
\end{gathered}
\]

Moreover,
\[
\begin{aligned}
 \sum_{j\in\mathtt J_1}\eta_{1,j}^q
 +\sum_{j\in\mathtt J_0}\eta_{0,j}^q
 &=S^{-q}\left(\sum_{j\in\mathtt J_1}a_{1,j}^q
 +\sum_{j\in\mathtt J_0}a_{0,j}^q\right)\\
 &=S^{-q}S^q=1.
\end{aligned}
\]
This proves the first equality in \eqref{pw}.

\medskip

\noindent \textbf{Step 7.}
We complete the proof of \eqref{sp} and \eqref{pw}. The chain rule and local finiteness give
\[
 \nabla_g S
 =S^{1-q}\left(
 \sum_{j\in\mathtt J_1}a_{1,j}^{q-1}\nabla_ga_{1,j}
 +\sum_{j\in\mathtt J_0}a_{0,j}^{q-1}\nabla_ga_{0,j}
 \right).
\]
Using $S\ge1$, $0\le a_{h,j}\le1$, and $q>1$,
\[
\begin{aligned}
 |\nabla_g S|
 &\leq S^{1-q}\sum_{h=0}^1\sum_{j\in\mathtt J_h}
 a_{h,j}^{q-1}|\nabla_g a_{h,j}|\\
 &\leq\sum_{h=0}^1\sum_{j\in\mathtt J_h}
 |\nabla_g a_{h,j}|\\
 &\leq N_1\frac{\sqrt{2{\radpa}}C_2}{r}
 +N_0\frac{\sqrt{\radpa} C_2}{s}.
\end{aligned}
\]
For each $(h,j)$,
\[
 \nabla_g\eta_{h,j}
 =\frac{\nabla_ga_{h,j}}{S}
 -\frac{a_{h,j}}{S^2}\nabla_g S.
\]
Consequently, for both families,
\[
\begin{aligned}
 |\nabla_g\eta_{h,j}|
 &\leq|\nabla_ga_{h,j}|+|\nabla_gS|\\
 &\leq C_2\sqrt{\radpa}\left(
 \frac{\sqrt2(N_1+1)}{r}+\frac{N_0+1}{s}
 \right)\\
 &=:{\bf c}_1({\radpa},n,{r,s}),
 \qquad (h=0,1).
\end{aligned}
\]

Finally, by \eqref{ov} and \eqref{16}, for each $x\in\cil$ there are at most
$N_0+N_1$ indices with $a_{h,j}(x)\ne0$. {The same bound holds for indices with $x\in\spt a_{h,j}$, as shown above.}
Outside the support of $a_{h,j}$,
$\nabla_g\eta_{h,j}=0$; hence,
\[
\begin{aligned}
 \sum_{j\in\mathtt J_1}|\nabla_g\eta_{1,j}(x)|^q
 +\sum_{j\in\mathtt J_0}|\nabla_g\eta_{0,j}(x)|^q
 \leq  {\bf c}_2({\radpa},n,q,{r,s}),
\end{aligned}
\]
where ${\bf c}_2 := (N_0+N_1)
 \left[\mathbf c_1({\radpa},n,{r,s})\right]^q$.\end{proof}

\subsection{Sobolev and trace estimates}
Throughout this subsection, we assume that
\[
 n\geq2,\qquad 1<q<n .
\]

Here $C_c^k({\cil})$ consists of functions of class $C^k$ up to $\partial {\cil}$
with compact support in ${\cil}$. Their boundary values may be nonzero.
The same convention applies to $C_c^k(\overline{\mathbb R^n_+})$.

We use the following constants throughout this article (see \cite{MaggiNeumayer2017,zbMATH05077944}):
\begin{gather}
 \consobb     :=\sup_{\substack{v\in C_c^\infty(\overline{\mathbb R^n_+})\\v\neq0}}
 \frac{\displaystyle
  \left(\int_{\mathbb R^{n-1}}|v(\xeuc ^\prime,0)|^{\exnum}  \,\din \xeuc ^\prime\right)^{q/{\exnum}}}
 {\displaystyle\int_{\mathbb R^n_+}|\nabla v|^q\,\din \xeuc}
 \in(0,\infty),\label{k}\\
 \consobbm    =\sup_{\substack{v\in C_c^\infty( \overline{\mathbb R^n_+}  )\\v\neq0}}
 \frac{\displaystyle
  \left(\int_{\mathbb R^{n }_+}|v |^{\exast}  \,\din \xeuc \right)^{q/{\exast}}}
 {\displaystyle\int_{\mathbb R^n_+}|\nabla v|^q\,\din \xeuc}
 \in(0,\infty), \label{44}
\end{gather}
and
\begin{equation}\label{25}
 \consob    =\sup_{\substack{v\in C_c^\infty( \mathbb R^n  )\\v\neq0}}
 \frac{\displaystyle
  \left(\int_{\mathbb R^{n }}|v |^{\exast}  \,\din \xeuc \right)^{q/{\exast}}}
 {\displaystyle\int_{\mathbb R^n}|\nabla v|^q\,\din \xeuc}
 \in(0,\infty).
\end{equation}
 
We have $\consobbm = 2^{q/n } \consob$.
{By smooth approximation, the inequalities in \eqref{k}, \eqref{44} and \eqref{25} also hold for compactly supported $C^1$ functions on their respective domains.}


Under Definition~\ref{bg}, we define
\[
W^{1,q}(\cil)
:=
\overline{C_c^\infty(\cil)}
^{\,\|\cdot\|_{W^{1,q}(\cil)}} ,
\]
where
\[
 \|u\|_{W^{1,q}({\cil})}^q
 :=\int_{{\cil}}\bigl(|u|^q+|\nabla_gu|^q\bigr)\,\din v_g .
\]
Equivalently, by Sobolev density on complete
Riemannian manifolds with smooth boundary,
\[
W^{1,q}(\cil)
=
\left\{
u\in L^q(\cil)\mid
\nabla_g u\in L^q({T\cil})
\right\},
\]
where $\nabla_g u$ is understood in the
distributional sense on the interior of $\cil$.



To prove the following results, we use the technique in
\cite[Theorem 7.1]{zbMATH01447265}.  
\begin{proposition}\label{pr}
Under the assumptions of Definition~\ref{bg}, the restriction map admits a unique bounded
linear extension
\begin{equation}\label{tr}
 \operatorname{Tr}:W^{1,q}({\cil})\to L^{\exnum}(\partial {\cil},\din s_g ),
 \qquad
 \operatorname{Tr}u=u|_{\partial {\cil}}\quad(u\in C_c^\infty({\cil})).
\end{equation}
For every $\epsilon>0$, there exists $C_\epsilon>0$ such that
\begin{equation}\label{iq}
  \begin{aligned}
 &\left(\int_{\partial {\cil}}|\operatorname{Tr}u|^{\exnum}\,\din s_g \right)^{q/{\exnum}}\\
 &\qquad\leq(\consobb     +\epsilon)\int_{{\cil}}|\nabla_gu|^q\,\din v_g 
              +C_\epsilon\int_{\cil}|u|^q\,\din v_g ,
 \qquad u\in W^{1,q}({\cil}).
 \end{aligned}
\end{equation}

\end{proposition}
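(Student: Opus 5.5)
The plan is to localize with the partition of unity of Lemma~\ref{pt}, apply the sharp Euclidean half-space trace inequality \eqref{k} in each boundary chart, and absorb the metric distortion using \eqref{mt}. Uniqueness of the extension will follow from density of $C_c^\infty(\cil)$ in $W^{1,q}(\cil)$, which holds by definition. It therefore suffices to prove \eqref{iq} for $u\in C_c^\infty(\cil)$. Boundedness of $\operatorname{Tr}$ is then the case $\epsilon=1$, together with the elementary bound $(a+b)^{1/q}\le a^{1/q}+b^{1/q}$.

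\textbf{Step 1: localization.} Fix $\epsilon>0$, choose $\radpa>1$ close to $1$, and take $r,s$ as in \eqref{ms}. Apply Lemma~\ref{pt} with exponent $q$ to get the functions $\eta_{h,j}$. Only boundary charts meet $\partial\cil$, since interior supports stay at distance $>3r/8-3s/4>0$ from $\partial\cil$ by \eqref{21}. On $\partial\cil$ we therefore have $\sum_j\eta_{1,j}^q=1$, so
\[
\|u\|_{L^{\exnum}(\partial\cil)}^q=\Bigl\|\sum_j\eta_{1,j}^q|u|^q\Bigr\|_{L^{\exnum/q}(\partial\cil)}\le\sum_j\|\eta_{1,j}u\|_{L^{\exnum}(\partial\cil)}^q,
\]
using the triangle inequality in $L^{\exnum/q}$, which is valid because $\exnum/q>1$.

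\textbf{Step 2: one chart.} Set $v=(\eta_{1,j}u)\circ\Phi_{1,z_{1,j}}$. This is a compactly supported $C^1$ function on $\overline{\mathbb R^n_+}$, so \eqref{k} applies. By \eqref{mt}, the boundary volume density is at most $\radpa^{(n-1)/2}$, the interior density is at least $\radpa^{-n/2}$, and $|\nabla v|^2\le\radpa|\nabla_g(\eta u)|_g^2$. This gives
\[
\|\eta_{1,j}u\|_{L^{\exnum}(\partial\cil)}^q\le\radpa^{c(n,q)}\consobb\int_\cil|\nabla_g(\eta_{1,j}u)|^q\,\din v_g .
\]
Choose $\radpa$ so that $\radpa^{c(n,q)}$ is as close to $1$ as needed.

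\textbf{Step 3: summation, the main obstacle.} Expand $|\nabla_g(\eta u)|^q\le(1+\delta)\eta^q|\nabla_g u|^q+C_\delta|\nabla_g\eta|^q|u|^q$. Summing over $j$ and using \eqref{pw} gives
\[
\sum_j\eta_{1,j}^q\le1,\qquad \sum_j|\nabla_g\eta_{1,j}|^q\le\mathbf c_2 .
\]
Hence
\[
\|u\|^q_{L^{\exnum}(\partial\cil)}\le\radpa^{c}(1+\delta)\consobb\int|\nabla_g u|^q+\radpa^cC_\delta\mathbf c_2\int|u|^q .
\]
Choosing $\radpa$ and $\delta$ so that $\radpa^c(1+\delta)\consobb\le\consobb+\epsilon$ yields \eqref{iq}, with $C_\epsilon$ depending on $r,s$ through $\mathbf c_2$.

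The delicate point is that the leading constant must stay exactly $\consobb+\epsilon$. This is why the partition is built with $q$-th powers summing to one and why we use the $L^{\exnum/q}$ triangle inequality, rather than the bounded overlap $N_\ast$, which would inflate the constant. Overlap enters only through $\mathbf c_2$, in the lower-order term.
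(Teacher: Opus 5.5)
Your proposal is correct and follows the paper's proof essentially step for step. The common ingredients are the partition of Lemma~\ref{pt} with $q$-th powers summing to one on $\partial\cil$, the triangle inequality in $L^{\exnum/q}(\partial\cil)$, and the one-chart estimate, which is Lemma~\ref{lc} with your exponent $c(n,q)=n$. The remaining two are the convexity splitting $|X+Y|^q\le(1+\delta)|X|^q+C_\delta|Y|^q$, where the paper takes $1+\delta=\radpa$ and $\consobb\radpa^{n+1}=\consobb+\epsilon$, and density to define $\operatorname{Tr}$ and extend \eqref{iq}.
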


The proof of the following proposition is similar to that of Proposition~\ref{pr} and is therefore omitted.
\begin{proposition}\label{26}
Under the assumptions of Definition~\ref{bg}, for every $\epsilon>0$, there exists $C_\epsilon>0$ such that
\begin{equation}\label{27}
  \begin{aligned}
 &\left(\int_{ {\cil}}|u|^{\exast}\,{ \din v_g} \right)^{q/\exast}\\
 &\qquad\leq({\consobbm}+\epsilon)\int_{{\cil}}|\nabla_gu|^q\,\din v_g 
              +C_\epsilon\int_{\cil}|u|^q\,\din v_g ,
 \qquad u\in W^{1,q}({\cil}).
 \end{aligned}
\end{equation}
\end{proposition}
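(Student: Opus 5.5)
The plan is to follow the localization technique of the proof of Proposition~\ref{pr}, now with the half-space Sobolev constant $\consobbm$ from \eqref{44} in place of the trace constant $\consobb$. By density of $C_c^\infty(\cil)$ in $W^{1,q}(\cil)$ and Fatou's lemma, it suffices to prove \eqref{27} for $u\in C_c^\infty(\cil)$. Fix $\epsilon>0$ and choose $\radpa>1$ close to $1$ (to be fixed at the end). Choose $r,s$ satisfying \eqref{ms}, and take the partition $\{\eta_{h,j}\}$ of Lemma~\ref{pt}. Use the exponent $p:=\exast/q>1$ in the relation $\sum\eta_{h,j}^{p}=1$; Lemma~\ref{pt} allows any exponent $>1$. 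Then
\[
\|u\|_{L^{\exast}}^{q}=\Bigl\|\sum_{h,j}\eta_{h,j}^{p}|u|^{q}\Bigr\|_{L^{p}}\le\sum_{h,j}\|\eta_{h,j}^{p}|u|^q\|_{L^p}=\sum_{h,j}\|\eta_{h,j}u\|_{L^{\exast}}^{q}
\]
by Minkowski's inequality in $L^p$ (here $\eta^p|u|^q$ raised to the power $p$ is $(\eta u)^{\exast}$ up to exponent bookkeeping). This reduces the estimate to functions supported in a single chart.

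For each local term, pull $\eta_{h,j}u$ back by $\Phi_{h,z_{h,j}}$. Boundary charts give compactly supported $C^1$ functions on $\overline{\R^n_+}$, and we apply \eqref{44}. Interior charts give functions in $C^1_c(\R^n)$, and we apply \eqref{25}; note $\consob\le\consobbm$. By \eqref{mt}, the volume density lies between $\radpa^{-n/2}$ and $\radpa^{n/2}$ and $|\nabla v|^2\le\radpa|\nabla_g v|^2$. Hence
\[
\|\eta_{h,j}u\|_{L^{\exast}}^q\le\radpa^{c(n,q)}\consobbm\int_\cil|\nabla_g(\eta_{h,j}u)|^q\,\din v_g,
\]
with an exponent $c(n,q)$ that depends only on $n$ and $q$. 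Then use $|\nabla_g(\eta u)|^q\le(1+\delta)\eta^q|\nabla_g u|^q+C_\delta|\nabla_g\eta|^q|u|^q$.

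The main obstacle is summing $\eta_{h,j}^q$, since we only control $\sum\eta_{h,j}^{p}=1$ with $p>q$. Since $\eta\le1$ and $p>q$, we do not have $\sum\eta^q\le1$ directly. The remedy I would try first is to rebuild the partition of Lemma~\ref{pt} for the exponent $q$ and to handle the splitting differently: write $\eta_{h,j}u$ with $\sum\eta_{h,j}^q=1$, and bound $\|u\|_{L^{\exast}}^q=\|u^q\|_{L^{p}}=\|\sum\eta^q|u|^q\|_{L^p}\le\sum\|\eta u\|_{L^{\exast}}^q$, again by Minkowski's inequality. This is exactly the $q$-partition, so $\sum_{h,j}\eta_{h,j}^q|\nabla_g u|^q=|\nabla_g u|^q$. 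The bounded overlap \eqref{19} and \eqref{pw} control $\sum|\nabla_g\eta_{h,j}|^q|u|^q\le{\bf c}_2|u|^q$.

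Collecting terms gives $\|u\|_{L^{\exast}}^q\le\radpa^{c}(1+\delta)\consobbm\|\nabla_gu\|_q^q+C\|u\|_q^q$. Finally, choose $\radpa$ and $\delta$ so that $\radpa^{c}(1+\delta)\consobbm\le\consobbm+\epsilon$; this fixes $r$, $s$, and hence $C_\epsilon$.
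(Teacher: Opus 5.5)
Your final argument is correct and is exactly the route the paper intends when it says the proof mirrors that of Proposition~\ref{pr}. That argument consists of the partition of Lemma~\ref{pt} built with exponent $q$, Minkowski's inequality in $L^{\exast/q}$, the chartwise Sobolev bounds of Lemma~\ref{ls} (using $\consob\le\consobbm$ for interior charts), the convexity splitting of $|\nabla_g(\eta u)|^q$, and density plus Fatou. You should delete the opening detour with $\sum\eta_{h,j}^{\exast/q}=1$: the claimed identity $\|\eta^{p}|u|^q\|_{L^p}=\|\eta u\|_{L^{\exast}}^q$ there is false (it is only an inequality, valid when $p\ge q$), and that partition would in any case lose the sharp constant.
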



%
\begin{lemma}\label{lc}
Let ${\radpa}>1$, $0<r\leq r_{\radpa}$, {$z\in\partial\cil$}, and
$w\in C_c^1({\cil})$ with $\operatorname{supp}w\Subset {\Phi_{1,z}(Q_{1,r})}$.
The support may intersect $\partial {\cil}$. Then
\begin{equation}\label{li}
 \left(\int_{\partial {\cil}}|w|^{\exnum}\,\din s_g \right)^{q/{\exnum}}
 \leq\consobb     {\radpa}^n\int_{\cil}|\nabla_gw|^q\,\din v_g .
\end{equation}
\end{lemma}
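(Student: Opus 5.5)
Pull $w$ back to the half-space chart and compare the Riemannian quantities with the Euclidean ones, using the matrix bounds \eqref{mt} of Lemma~\ref{chs}. Set $v:=w\circ\Phi_{1,z}$ on $Q_{1,r}$ and extend it by zero to $\overline{\mathbb R^n_+}$. Since $\operatorname{supp}w\Subset\Phi_{1,z}(Q_{1,r})$, the support of $v$ is compact inside $B^{n-1}_r\times[0,r)$. It may touch $\{t=0\}$ but stays away from the lateral and top parts of $\partial Q_{1,r}$. Hence $v\in C^1_c(\overline{\mathbb R^n_+})$, and the remark after \eqref{25} lets us apply the sharp trace inequality \eqref{k} to $v$:
\[
\left(\int_{\mathbb R^{n-1}}|v(\xeuc',0)|^{\exnum}\,\din\xeuc'\right)^{q/\exnum}\le\consobb\int_{\mathbb R^n_+}|\nabla v|^q\,\din\xeuc .
\]

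\emph{Boundary side.} Along $\partial\cil$ the chart restricts to $\xeuc'\mapsto\exp^\partial_z(E_{1,z}\xeuc')$, so $\din s_g=\sqrt{\det[g_{\alpha\beta}(\xeuc',0)]}\,\din\xeuc'$. The tangential block $[g_{\alpha\beta}(\xeuc',0)]_{\alpha,\beta\le n-1}$ is a principal submatrix of $[g_{ij}]$, so by \eqref{mt} it satisfies $\radpa^{-1}I\le[g_{\alpha\beta}]\le\radpa I$. Therefore $\sqrt{\det}\ge\radpa^{-(n-1)/2}$, which goes the wrong way for our purpose. We need the upper bound $\sqrt{\det}\le\radpa^{(n-1)/2}$, which gives
\[
\int_{\partial\cil}|w|^{\exnum}\,\din s_g\le\radpa^{(n-1)/2}\int|v(\xeuc',0)|^{\exnum}\,\din\xeuc'.
\]

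\emph{Interior side.} We have $|\nabla v|^2=(\nabla v)^T\nabla v$ and $|\nabla_g w|^2=(\nabla v)^T[g_{ij}]^{-1}\nabla v\ge\radpa^{-1}|\nabla v|^2$. Hence $|\nabla v|^q\le\radpa^{q/2}|\nabla_g w|^q$. Also $\din\xeuc\le\radpa^{n/2}\,\din v_g$, because $\sqrt{\det[g_{ij}]}\ge\radpa^{-n/2}$. Combining these,
\[
\int|\nabla v|^q\,\din\xeuc\le\radpa^{(q+n)/2}\int_\cil|\nabla_g w|^q\,\din v_g .
\]

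\emph{Assembling.} Raising the boundary estimate to the power $q/\exnum$ and chaining it with the two inequalities above yields the constant $\consobb\,\radpa^{\frac{(n-1)q}{2\exnum}+\frac{q+n}{2}}$. It remains to check that this exponent is at most $n$, so that $\radpa>1$ gives \eqref{li}. With $\exnum=q(n-1)/(n-q)$ we get $\frac{(n-1)q}{2\exnum}=\frac{n-q}{2}$, and the total exponent is $\frac{n-q}{2}+\frac{q+n}{2}=n$ exactly.

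The only real care point is the support condition that lets \eqref{k} be used after zero extension. The hypothesis $\operatorname{supp}w\Subset\Phi_{1,z}(Q_{1,r})$, with $Q_{1,r}$ half-open in $t$, guarantees exactly this, and $r\le r_\radpa$ is what makes \eqref{mt} available on the whole chart. The exponent bookkeeping just done is otherwise routine, and it also explains why the constant in the statement is $\radpa^n$.
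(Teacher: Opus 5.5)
Your proof is correct and follows the paper's argument essentially step for step. You pull back by $\Phi_{1,z}$, use \eqref{mt} to get $J_z^\partial\le\radpa^{(n-1)/2}$, $J_z\ge\radpa^{-n/2}$ and $[g_{ij}]^{-1}\ge\radpa^{-1}I$, apply \eqref{k} to the zero extension $v$, and check that $\frac{n-q}{2}+\frac{n+q}{2}=n$. The only blemish is expository: the upper bound $\sqrt{\det[g_{\alpha\beta}]}\le\radpa^{(n-1)/2}$ follows directly from the inequality $[g_{\alpha\beta}]\le\radpa I$ you already cited, so you can drop the remark about the unneeded lower bound and state this upper bound as its consequence.
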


\begin{proof}
 In the coordinates given by $\Phi _{1,z}$ and $\Phi_z ^\partial  : B_{r_1}^{n-1} \to \partial \cil$, with $\Phi_z^\partial\left(x^{\prime}\right):=F\left(\exp _z^{\partial \cil}\left({E_{1,z}}x^{\prime}\right), 0\right)$,
 set
\[
 J_z=\sqrt{\det [g_{ij}]},
 \qquad J_{z} ^\partial=\sqrt{\det [g_{\alpha \beta} ^\partial]}.
\]

By \eqref{mt}, we have
\begin{equation} \label{22}
 {\radpa}^{-n}\leq\det [g_{ij}]\leq{\radpa}^n,
 \qquad {\radpa}^{-n/2}\leq J_z\leq{\radpa}^{n/2}.
\end{equation}
For $\xi'\in\mathbb R^{n-1}$, apply \eqref{mt} to $(\xi',0)$:
\[
 {\radpa}^{-1}|\xi'|^2\leq(\xi')^T[g_{\alpha \beta} ^\partial]\xi'
 \leq{\radpa}|\xi'|^2.
\]

It follows that
\begin{equation}\label{dn}
 {\radpa}^{-(n-1)/2}\leq J_{z} ^\partial\leq{\radpa}^{(n-1)/2},
 \qquad \din v_g =J_z\,\din \xeuc ^\prime\,\din t,\qquad
 \din s_g =J_{z} ^\partial\, \din \xeuc ^\prime.
\end{equation}

Let $v=w\circ{\Phi_{1,z}}$ on $Q_{{1},r}$, extended by zero to $\overline{\mathbb R^n_+}$; we have
\[
 g(\nabla_gw , d{\Phi_{1,z}}(  e_i))=\frac{\din v}{\din x_i} \quad i=1, \ldots n
\]
and 
\begin{equation}\label{gr}
 |\nabla_gw|^2\circ{\Phi_{1,z}}
 =(\nabla v)^T[g_{ij}]^{-1}\nabla v\geq{\radpa}^{-1}|\nabla v|^2.
\end{equation}

Hence,
\begin{align}
 \int_{\cil}|\nabla_gw|^q\,\din v_g 
 &=\int_{Q_{{1},r}}\left[(\nabla v)^T[g_{ij}]^{-1}\nabla v\right]^{q/2}J_z\, \din \xeuc ^\prime\,\din t
 \nonumber\\
 &\geq{\radpa}^{-(n+q)/2}
          \int_{\mathbb R^n_+}|\nabla v|^q\, \din \xeuc .\label{eg}
\end{align}

 By \eqref{dn} and \eqref{eg},
\begin{align*}
 \left(\int_{\partial {\cil}}|w|^{\exnum}\,\din s_g \right)^{q/{\exnum}}
 &=\left(\int_{B_r^{n-1}}|v(\xeuc ^\prime,0)|^{\exnum}J_{z} ^\partial(\xeuc ^\prime)\,\din \xeuc ^\prime\right)^{q/{\exnum}}\\
 &\leq{\radpa}^{\frac{(n-1)q}{2{\exnum}}}
       \left(\int_{\mathbb R^{n-1}}|v(\xeuc ^\prime,0)|^{\exnum}\, \din \xeuc ^\prime\right)^{q/{\exnum}}\\
 &\leq\consobb     {\radpa}^{\frac{(n-1)q}{2{\exnum}}}
          \int_{\mathbb R^n_+}|\nabla v|^q\, \din \xeuc\\
 &\leq\consobb     {\radpa}^{\frac{(n-1)q}{2{\exnum}}+\frac{n+q}{2}}
          \int_{\cil}|\nabla_gw|^q\,\din v_g .
\end{align*}
Since ${\exnum}=q(n-1)/(n-q)$,
\[
 \frac{(n-1) q}{2{\exnum}}=\frac{n-q}{2},\qquad
 \frac{(n-1)q}{2{\exnum}}+\frac{n+q}{2}=n.
\]
This proves \eqref{li}.\end{proof}

 
\begin{lemma}\label{ls}
Let ${\radpa}>1$ and $w\in C_c^1(\cil)$.  

If $\operatorname{supp}w\Subset\Phi_{1,z}(Q_{1,r}),
 z\in\partial\cil$, $0<r\leq r_{\radpa}$, then
\begin{equation}\label{o2}
 \left(\int_\cil |w|^{\exast}\,\din v_g\right)^{q/\exast}
 \leq\consobbm    {\radpa}^n
 \int_\cil |\nabla_gw|^q\,\din v_g.
\end{equation}

If $\operatorname{supp}w\Subset\Phi_{0,z}(Q_{0,r}),
 z\in\cil\setminus\partial\cil,\quad 0<r\leq r_{z,{\radpa}}$, then
\begin{equation}\label{46}
 \left(\int_\cil |w|^{\exast}\,\din v_g\right)^{q/\exast}
 \leq\consob    {\radpa}^n
 \int_\cil |\nabla_gw|^q\,\din v_g.
\end{equation}
 
\end{lemma}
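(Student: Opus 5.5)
The argument follows the proof of Lemma~\ref{lc}: pull $w$ back through the chart, compare the Riemannian and Euclidean quantities using \eqref{mt}, and apply the sharp Euclidean constants \eqref{44} and \eqref{25}. For the boundary case, let $v=w\circ\Phi_{1,z}$ on $Q_{1,r}$, extended by zero to $\overline{\mathbb R^n_+}$. Since $\operatorname{supp}w\Subset\Phi_{1,z}(Q_{1,r})$ and $w\in C_c^1(\cil)$, we get $v\in C_c^1(\overline{\mathbb R^n_+})$; its trace on $\{t=0\}$ may be nonzero, but that does not matter here. By the remark after \eqref{25}, the inequality \eqref{44} applies to $v$. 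For the interior case, let $v=w\circ\Phi_{0,z}$ on $Q_{0,r}=B^n_r$, extended by zero to $\mathbb R^n$. Then $v\in C_c^1(\mathbb R^n)$, and \eqref{25} applies.

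Next come the metric comparisons. On $Q_{1,r}$ with $r\le r_{\radpa}$, and on $Q_{0,r}$ with $r\le r_{z,\radpa}$, the bound \eqref{mt} gives $\radpa^{-n/2}\le J_z\le\radpa^{n/2}$, as in \eqref{22}. It also gives $|\nabla_g w|^2\circ\Phi_{i,z}\ge\radpa^{-1}|\nabla v|^2$, exactly as in \eqref{gr}. Integrating as in \eqref{eg}, we get
\[
\int_{\cil}|\nabla_g w|^q\,\din v_g\ \ge\ \radpa^{-(n+q)/2}\int|\nabla v|^q\,\din\xeuc .
\]
On the left-hand side, the upper bound on $J_z$ gives
\[
\Bigl(\int_{\cil}|w|^{\exast}\,\din v_g\Bigr)^{q/\exast}\le \radpa^{\frac{nq}{2\exast}}\Bigl(\int|v|^{\exast}\,\din\xeuc\Bigr)^{q/\exast}.
\]
Chaining these two estimates with the Euclidean inequality yields the constant $\consobbm\,\radpa^{\frac{nq}{2\exast}+\frac{n+q}{2}}$, or $\consob$ times the same power of $\radpa$ in the interior case.

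The only point to check is the exponent. Since $\exast=nq/(n-q)$, we have $\frac{nq}{2\exast}=\frac{n-q}{2}$, so the total power is $\frac{n-q}{2}+\frac{n+q}{2}=n$. This gives \eqref{o2} and \eqref{46}.

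I do not expect a real obstacle. The one thing to verify is that the radius hypotheses are exactly those under which \eqref{mt} holds in the relevant chart, which Lemma~\ref{chs} provides. In the interior case one also needs $\Phi_{0,z}$ to be defined on $Q_{0,r}$. This holds because $r_{z,\radpa}\le\frac12\min\{r_1,d_g(z,\partial\cil)\}$, which is the domain radius in \eqref{ch}.
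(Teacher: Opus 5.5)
Your proposal is correct and follows the paper's proof essentially line by line. The paper also pulls $w$ back through $\Phi_{i,z}$, uses \eqref{mt} to get the Jacobian bound and $|\nabla_g w|^q\circ\Phi_{i,z}\ge\radpa^{-q/2}|\nabla v|^q$, applies \eqref{44} or \eqref{25}, and checks that $\frac{n-q}{2}+\frac{n+q}{2}=n$.
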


\begin{proof}

For either admissible chart, let $v=w\circ\Phi_{i,z}$,
extended by zero outside $Q_{i,r}$ (to $\mathbb R^n_+$ for
$i=1$, and to $\mathbb R^n$ for $i=0$). By \eqref{mt},
\[
 {\radpa}^{-1}I\leq[g_{ij}]\leq{\radpa} I,
 \qquad
 {\radpa}^{-1}I\leq[g_{ij}]^{-1}\leq{\radpa} I.
\]
Writing $J_z=\sqrt{\det[g_{ij}]}$ as in Lemma~\ref{lc},
\begin{equation}\label{45}
 {\radpa}^{-n/2}\leq J_z\leq{\radpa}^{n/2},
 \qquad
 |\nabla_gw|^q\circ\Phi_{i,z}
 =\bigl((\nabla v)^T[g_{ij}]^{-1}\nabla v\bigr)^{q/2}
 \geq{\radpa}^{-q/2}|\nabla v|^q.
\end{equation}
Thus
\[
 \left(\int_\cil|w|^{\exast}\,\din v_g\right)^{q/\exast}
 \leq{\radpa}^{nq/(2\exast)}
 \left(\int_{Q_{i,r}}|v|^{\exast}\,\din\xeuc\right)^{q/\exast}
 ={\radpa}^{(n-q)/2}
 \left(\int_{Q_{i,r}}|v|^{\exast}\,\din\xeuc\right)^{q/\exast}
 \]
 and
 \[
{\radpa}^{-(n+q)/2}
 \int_{Q_{i,r}}|\nabla v|^q\,\din\xeuc \le  \int_\cil|\nabla_gw|^q\,\din v_g.
\]

For the case $\spt {w}\Subset\Phi_{i,z}(Q_{i,r})$ with $i=1$, we obtain
\[
 \begin{aligned}
 \left(\int_\cil|w|^{\exast}\,\din v_g\right)^{q/\exast}
 &\leq{\radpa}^{(n-q)/2}
 \left(\int_{\mathbb R^n_+}|v|^{\exast}\,\din\xeuc\right)^{q/\exast}\\
 &\leq\consobbm   {\radpa}^{(n-q)/2}
 \int_{\mathbb R^n_+}|\nabla v|^q\,\din\xeuc\\
 &\leq\consobbm    {\radpa}^{(n-q)/2+(n+q)/2}
 \int_\cil|\nabla_gw|^q\,\din v_g\\
 &=\consobbm    {\radpa}^n\int_\cil|\nabla_gw|^q\,\din v_g.
 \end{aligned}
\]

For the case $i=0$, using the coordinates on $\Phi _{i,z}(Q_{i,r})$, we obtain
{
\[
 \begin{aligned}
 \left(\int_\cil|w|^{\exast}\,\din v_g\right)^{q/\exast}
 &\le\radpa^{(n-q)/2}\left(\int_{\mathbb R^n}|v|^{\exast}\,\din\xeuc\right)^{q/\exast}\\
 &\le\consob\radpa^{(n-q)/2}\int_{\mathbb R^n}|\nabla v|^q\,\din\xeuc\\
 &\le\consob\radpa^{(n-q)/2+(n+q)/2}\int_\cil|\nabla_gw|^q\,\din v_g
 =\consob\radpa^n\int_\cil|\nabla_gw|^q\,\din v_g.
 \end{aligned}
\]
}\end{proof}




%

\begin{proof}[Proof of Proposition \ref{pr}]
\textbf{Step 1.}
Fix $\epsilon>0$ and set
\begin{equation}\label{ce}
 {\radpa}=\left(1+\frac{\epsilon}{\consobb     }\right)^{1/(n+1)}>1,
 \qquad r=\min\{r_{\radpa},s_0/4\},\qquad
 \consobb     {\radpa}^{n+1}=\consobb     +\epsilon.
\end{equation}
{In applying Lemma~\ref{pt} with the parameters in \eqref{ce}, choose $s:=r/32>0$. Then \eqref{ms} holds, and both radii are fixed by $\epsilon$, $n$, $q$, and the geometry.}

Take the cutoffs of Lemma~\ref{pt}. Let $u\in C_c^1({\cil})$. Only finitely many $\eta_{1,j}u$ are nonzero. On $\partial {\cil}$, \eqref{pw} gives
\[
 |u|^q=\sum_{h=0}^1\sum_{j\in\mathtt J_h} \eta_{h,j}^q|u|^q=\sum_j|\eta_{1,j}u|^q.
\]

Because ${\exnum}/q>1$,
\begin{align}
 \|u\|_{L^{\exnum} (\partial {\cil})}^q
 &
 =\big\||u|^q\big\|_{L^{{\exnum}/q} (\partial {\cil})}\nonumber\\
 &=\Big\|\sum_j|\eta_{1,j}u|^q\Big\|_{L^{{\exnum}/q} (\partial {\cil})}
 \leq\sum_j\big\||\eta_{1,j}u|^q\big\|_{L^{{\exnum}/q} (\partial {\cil})}\nonumber\\
 &=\sum_j\|\eta_{1,j}u\|_{L^{\exnum}(\partial {\cil})}^q
 \leq\consobb     {\radpa}^n
       \sum_j\int_{\cil}|\nabla_g(\eta_{1,j}u)|^q\,\din v_g ,\label{su}
\end{align}
by Lemma~\ref{lc}.

\medskip

\noindent \textbf{Step 2.}
Set
\begin{equation}\label{yc}
 a={\radpa}^{-1/(q-1)}\in(0,1),\qquad
 c_{\radpa}=(1-a)^{1-q}
 =\left[1-{\radpa}^{-1/(q-1)}\right]^{1-q}.
\end{equation}

For vectors $X,Y$, convexity of $|\cdot|^q$
gives
\begin{align}
 |X+Y|^q
 &=\left|a\frac{X}{a}+(1-a)\frac{Y}{1-a}\right|^q\nonumber\\
 &\leq a\left|\frac{X}{a}\right|^q
       +(1-a)\left|\frac{Y}{1-a}\right|^q\nonumber\\
 &=a^{1-q}|X|^q+(1-a)^{1-q}|Y|^q
 ={\radpa}|X|^q+c_{\radpa}|Y|^q.\label{cx}
\end{align}

Since $\nabla_g(\eta_{1,j}u)=\eta_{1,j}\nabla_gu+u\nabla_g\eta_{1,j}$,
\begin{align*}
 \sum_j|\nabla_g(\eta_{1,j}u)|^q
 &\leq{\radpa}|\nabla_gu|^q\sum_j\eta_{1,j}^q
       +c_{\radpa}|u|^q\sum_j|\nabla_g\eta_{1,j}|^q\\
 &\leq{\radpa}|\nabla_gu|^q+c_{\radpa} {\bf c}_2 |u|^q.
\end{align*}
Insert this bound into \eqref{su} and use \eqref{ce}:
\begin{align}
 \|u\|_{L^{\exnum} (\partial {\cil})}^q
 &\leq\consobb     {\radpa}^{n+1}\int_{\cil}|\nabla_gu|^q\,\din v_g 
       +\consobb     {\radpa}^nc_{\radpa} {\bf c}_2    \int_{\cil}|u|^q\,\din v_g \nonumber\\
 &=(\consobb     +\epsilon)\int_{\cil}|\nabla_gu|^q\,\din v_g 
       +C_\epsilon\int_{\cil}|u|^q\,\din v_g , \label{5}
\end{align}
where $C_\epsilon=\consobb     {\radpa}^n
       (1-{\radpa}^{-1/(q-1)})^{1-q}{\bf c}_2$. This proves \eqref{iq} for $u\in C_c^1({\cil})$.

\medskip

\noindent \textbf{Step 3.} For every $u\in W^{1,q}({\cil})$, there exists a sequence
$u_m\in C_c^\infty({\cil})$ with $u_m\to u$ in $W^{1,q}({\cil})$.
Applying \eqref{5} with $\epsilon=1$ to $u_m-u_l$, we obtain
\[
 \|u_m-u_l\|_{L^{\exnum}(\partial {\cil})}^q
 \leq(\consobb     +1)\|\nabla_g(u_m-u_l)\|_{L^q ({\cil})}^q
          +C_1\|u_m-u_l\|_{L^q ({\cil})}^q\to0.
\]

Since the space $L^{\exnum}(\partial {\cil})$ is complete, we define
\begin{equation}\label{lm}
 \operatorname{Tr}u:=\lim_{m\to\infty}u_m|_{\partial {\cil}}
 \quad\text{in }L^{\exnum}(\partial {\cil}).
\end{equation}
For another approximation $v_m\to u$, the same estimate on $u_m-v_m$
shows that the two limits agree.  Passing to the limit in \eqref{5} with $\epsilon=1$ gives
\[
 \|\operatorname{Tr}u\|_{L^{\exnum} (\partial {\cil})}^q
 \leq(\consobb     +1)\|\nabla_gu\|_{L^q (\cil)}^q+C_1\|u\|_{L^q ({\cil})}^q.
\]
Thus the operator in \eqref{tr} is bounded. Density gives its uniqueness.


Finally, passing to the limit under strong convergence, we conclude that \eqref{iq} holds for every $u\in W^{1,q}(\cil)$.
\end{proof}

\begin{lemma} \label{23}
Assume that $n\ge 2$ {    and the conditions of Definition~\ref{bg} hold}. For $v\in C^1_c(\cil)$ and $\epsilon _0>0$, we have the estimates
\begin{gather}
\|v\|_{L^2(\partial \cil)}^2 \leq C_1 \left( \int_{\cil}{|v|\,|\nabla_g v|} \, \din v_g+\int_{\cil}|v|^2 \, \din v_g\right)\label{54}\\
 \|   v\|_{L^2(\partial\cil)}^2
 \le\epsilon _0 \|\nabla_gv\|_{L^2(\cil)}^2
 +C_2(1+\epsilon _0 ^{-1})\|v\|_{L^2(\cil)}^2\label{n3},
\end{gather}
where $C_i=C_i(n,g)>0$, $i=1,2$.

\end{lemma}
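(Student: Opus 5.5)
The plan is to prove \eqref{54} by localizing with the boundary cutoffs of Lemma~\ref{pt}, fixed once and for all with $\radpa=2$, $q=2$, $r=\min\{r_2,s_0/4\}$, $s=r/32$. Then \eqref{n3} follows from \eqref{54} by Young's inequality. Fix $v\in C^1_c(\cil)$. On $\partial\cil$ only the boundary cutoffs are nonzero, so \eqref{pw} gives $|v|^2=\sum_{j\in\mathtt J_1}\eta_{1,j}^2|v|^2$ there. This sum is finite on $\spt v$. Hence
\[
\|v\|_{L^2(\partial\cil)}^2=\sum_{j\in\mathtt J_1}\int_{\partial\cil}\eta_{1,j}^2v^2\,\din s_g,
\]
and it suffices to bound each term by a chart computation.

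For a single $j$, write $w=\eta_{1,j}^2v^2\in C^1_c(\cil)$. Its support lies in $\Phi_{1,z_{1,j}}(\overline{Q_{1,3r/4}})$, and we set $f=w\circ\Phi_{1,z_{1,j}}$ on $Q_{1,r}$. Since $f(\xeuc',t)=0$ for $t\ge 3r/4$, the fundamental theorem of calculus in the normal variable gives
\[
f(\xeuc',0)=-\int_0^{r}\partial_tf(\xeuc',t)\,\din t .
\]
Integrating in $\xeuc'$ yields $\int_{B^{n-1}_r}f(\xeuc',0)\,\din\xeuc'\le\int_{Q_{1,r}}|\nabla f|\,\din\xeuc$. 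We then convert back to the metric using \eqref{22}, \eqref{dn} and \eqref{gr} with $\radpa=2$: the Jacobians are comparable to $1$ up to $2^{n/2}$, and $|\nabla f|\le\sqrt2\,|\nabla_g w|\circ\Phi$. This gives
\[
\int_{\partial\cil}\eta_{1,j}^2v^2\,\din s_g\le C(n)\int_{\cil}|\nabla_g(\eta_{1,j}^2v^2)|\,\din v_g .
\]
Next we expand $|\nabla_g(\eta_{1,j}^2v^2)|\le 2\eta_{1,j}^2|v||\nabla_gv|+2\eta_{1,j}|\nabla_g\eta_{1,j}|v^2$. For the first term we use $\eta_{1,j}\le1$, and for the second the gradient bound $\mathbf c_1$ from \eqref{sp}.

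Summing over $j$ requires the overlap bound \eqref{19}: each point lies in at most $N_\ast$ supports. We therefore obtain
\[
\sum_j\int_\cil\bigl(\eta_{1,j}^2|v||\nabla_gv|+\eta_{1,j}|\nabla_g\eta_{1,j}|v^2\bigr)\le N_\ast\int_\cil\bigl(|v||\nabla_gv|+\mathbf c_1v^2\bigr).
\]
Together with the previous step, this is \eqref{54} with $C_1=C(n)N_\ast\max\{1,\mathbf c_1\}$. Here $N_\ast$ and $\mathbf c_1$ depend only on $n$ and the bounded-geometry data, since $r$ and $s$ were fixed by the geometry. Finally, for \eqref{n3} we apply $|v||\nabla_gv|\le \frac{\epsilon_0}{2C_1}|\nabla_gv|^2+\frac{C_1}{2\epsilon_0}v^2$ inside \eqref{54}. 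This gives $\epsilon_0\|\nabla_gv\|^2_{L^2}+(C_1+C_1^2/(2\epsilon_0))\|v\|^2_{L^2}$, which has the form $C_2(1+\epsilon_0^{-1})\|v\|^2_{L^2}$.

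There is no deep obstacle. The main care point is making sure the constants are uniform in $j$ and independent of $v$, which is exactly what fixing the chart radius and using the uniform bounds \eqref{mt}, \eqref{sp} and \eqref{19} provide. A second point is handling the $C^1$ regularity of $\eta_{1,j}^2v^2$: this function is $C^1$, so the one-dimensional fundamental theorem of calculus applies directly and no approximation is needed.
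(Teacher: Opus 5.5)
Your proposal is correct and follows essentially the same route as the paper. The shared steps are the boundary partition from Lemma~\ref{pt} with $\radpa=2$, $r=s_0/4$, $s=r/32$, the fundamental theorem of calculus in the normal variable on each chart, the Jacobian comparison via \eqref{mt}, and Young's inequality for \eqref{n3}. The differences are cosmetic: you differentiate $\eta_{1,j}^2v^2$ directly and sum using the overlap bound \eqref{19} with $\mathbf c_1$, while the paper differentiates $|v\eta_{1,j}|^2$ and sums using $\sum_j\eta_{1,j}^2\le1$ together with Cauchy--Schwarz and $\mathbf c_2$.
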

\begin{proof}
Fix $v\in C_c^1(\cil)$ and $\epsilon _0>0$. Applying Lemma~\ref{pt} with $q=2$, $\radpa=2$, $r=s_0/4$, and $s=r/32$, we obtain
\[
\sum_{j\in\mathtt J_1}\eta_{1,j}^2
=\sum_{j\in\mathtt J_1}\eta_{1,j}^2
 +\sum_{j\in\mathtt J_0}\eta_{0,j}^2=1{    \qquad\text{on }\partial\cil},
\]
since the interior cutoffs vanish on $\partial\cil$. The sums are locally finite, and $v$ has compact support. Consequently,
\begin{equation}\label{34}
\begin{aligned}
\|v\|_{L^2(\partial\cil)}^2
=\sum_{j\in\mathtt J_1}
  \|v\eta_{1,j}\|_{L^2(\partial\cil)}^2.
\end{aligned}
\end{equation}

For $j\in\mathtt J_1$ put
$u=(v\eta_{1,j})\circ\Phi_{1,z_{1,j}}$.
{Extend $u$ by zero for $t\ge r$; by \eqref{sp}, it already vanishes for $3r/4\le t<r$.}
By \eqref{sp}, 
\[
\begin{aligned}
|u(\xeuc',0)|^2
&=-\int_0^r\partial_t(|u(\xeuc',t)|^2)\,\din t\\
&=-2\int_0^r u(\xeuc',t)\partial_tu(\xeuc',t)\,\din t\\
&\le2\int_0^r |u|\,|\nabla u|\,\din t.
\end{aligned}
\]

By \eqref{mt}, \eqref{22}, and \eqref{dn},
\[
\begin{gathered}
\radpa^{-n/2}\le J_j\le\radpa^{n/2},\qquad
\radpa^{-(n-1)/2}\le J_j^\partial\le\radpa^{(n-1)/2},\\
|\nabla u|\le\radpa^{1/2}
 |\nabla_g(v\eta_{1,j})|\circ\Phi_{1,z_{1,j}}.
\end{gathered}
\]
Hence
\[
\begin{aligned}
\int_{\partial\cil}|v\eta_{1,j}|^2\,\din s_g
&=\int_{B_r^{n-1}}|u(\xeuc',0)|^2J_j^\partial\,\din\xeuc'\\
&\le2\radpa^{(n-1)/2}\int_{Q_{1,r}}|u|\,|\nabla u|\,\din\xeuc'\,\din t\\
&\le2\radpa^{(n-1)/2+1/2}\int_{Q_{1,r}}
 |u|\,|\nabla_g(v\eta_{1,j})|\circ\Phi_{1,z_{1,j}}
 \,\din\xeuc'\,\din t\\
&\le2\radpa^{(n-1)/2+1/2+n/2}
 \int_{\cil}|v\eta_{1,j}|
 |\nabla_g(v\eta_{1,j})|\,\din v_g\\
&=2\radpa^n\int_{\cil}|v\eta_{1,j}|
 |\nabla_g(v\eta_{1,j})|\,\din v_g.
\end{aligned}
\]

By \eqref{34} and the preceding estimate,
\[
\begin{aligned}
\|v\|_{L^2(\partial\cil)}^2
&\le2\radpa^n\sum_{j\in\mathtt J_1}
 \int_{\cil}|v|\eta_{1,j}
 |\eta_{1,j}\nabla_gv+v\nabla_g\eta_{1,j}|\,\din v_g\\
&\le2\radpa^n\int_{\cil}|v||\nabla_gv|
 \sum_{j\in\mathtt J_1}\eta_{1,j}^2\,\din v_g\\
&\quad+2\radpa^n\int_{\cil}|v|^2
 \sum_{j\in\mathtt J_1}\eta_{1,j}|\nabla_g\eta_{1,j}|\,\din v_g.
\end{aligned}
\]

Using \eqref{pw} and Cauchy--Schwarz,
\[
\sum_{j\in\mathtt J_1}\eta_{1,j}|\nabla_g\eta_{1,j}|
\le\left(\sum_j\eta_{1,j}^2\right)^{1/2}
 \left(\sum_j|\nabla_g\eta_{1,j}|^2\right)^{1/2}
\le\sqrt{\mathbf c_2}.
\]

It follows that
\[
\begin{aligned}
\|v\|_{L^2(\partial\cil)}^2
&\le2\radpa^n\int_{\cil}|v||\nabla_gv|\,\din v_g
 +2\radpa^n\sqrt{\mathbf c_2}
 \int_{\cil}|v|^2\,\din v_g .
\end{aligned}
\]
This proves \eqref{54}, {with $C_1:=2\radpa^n\max\{1,\sqrt{\mathbf c_2}\}$}.

We also have  
\[
\begin{aligned}
\|v\|_{L^2(\partial\cil)}^2
&\le\epsilon _0\|\nabla_gv\|_{L^2(\cil)}^2
 +\left(\frac{\radpa^{2n}}\epsilon _0
 +2\radpa^n\sqrt{\mathbf c_2}\right)
 \|v\|_{L^2(\cil)}^2,
\end{aligned}
\]
which proves \eqref{n3}.\end{proof}

\section{Variational levels and critical minimizers}\label{r1}

 {Throughout this section, $\consob$, $\consobb$, and $\consobbm$ are evaluated at $q=2$.} Fix $\centro\in\cil$, $j\in\mathbb N$, and $\exa_j:=B_g(\centro,j)$.

The space $C_c^\infty(\exa_j)$ consists of functions
smooth up to $\exa_j\cap\partial\cil$,
with supports compactly contained in $\exa_j$. Define
\begin{equation}\label{w1}
 W^{1,2}_{\partial}(\exa_j)
 :=\overline{C_c^\infty(\exa_j)}^{\,W^{1,2}(\exa_j)}.
\end{equation}
{    We identify each $u\in W^{1,2}_{\partial}(\exa_j)$ with its zero extension to $\cil$. The zero extensions of an approximating sequence in $C_c^\infty(\exa_j)$ preserve the $W^{1,2}$ norms and converge in $W^{1,2}(\cil)$ to that extension.}
\begin{lemma}\label{l2} If $\consy\geq 0$, then the following statements hold:
\begin{enumerate}[label=$(\roman*)$] 
\item  \label{89}  $ 0<\Lambda^+_{n,\consy}<\infty$.

\item \label{73} If  $Y_{\consy,\infty}(\cil)>0$, then $-\infty<Y_{\consy}(\cil)$.

\item \label{74} ${Y}_{\consy}(\cil)
 \leq {Y}_{\consy,\infty}(\cil)
 \leq\Lambda^+_{n,\consy}$.

\end{enumerate}
\end{lemma}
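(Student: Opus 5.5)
We prove the three items in order, using only the Sobolev and trace estimates of Section~\ref{2} specialized to $q=2$ (so $\exast=\dcrit$, $\exnum=\dcritb$).

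\textbf{Item \ref{89}.} For the upper bound we test \eqref{m1} with any fixed nonzero $v\in C_c^\infty(\mathbb R^n_+)$ vanishing near $\{t=0\}$. This gives $\Lambda^+_{n,\consy}\le\|\nabla v\|_2^2/\|v\|_{\dcrit}^2<\infty$.

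For the lower bound we use \eqref{44} and \eqref{k}:
\[
\|v\|_{L^{\dcrit}(\mathbb R^n_+)}^{\dcrit}\le(\consobbm\|\nabla v\|_2^2)^{\dcrit/2},\qquad
\|v(\cdot,0)\|_{L^{\dcritb}}^{\dcrit}\le(\consobb\|\nabla v\|_2^2)^{\dcrit/2}.
\]
Hence the denominator is at most $(\consobbm^{\dcrit/2}+\consy\consobb^{\dcrit/2})^{2/\dcrit}\|\nabla v\|_2^2$, which yields $\Lambda^+_{n,\consy}>0$.

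\textbf{Item \ref{74}.} The inequality $Y_\consy(\cil)\le Y_\consy(\cil\setminus K_i)$ holds because the admissible class shrinks; letting $i\to\infty$ gives the first inequality.

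For the second inequality, fix $\delta>0$ and $v\in C_c^\infty(\overline{\mathbb R^n_+})$ whose quotient is within $\delta$ of $\Lambda^+_{n,\consy}$. We rescale, $v_\mu(\xeuc)=\mu^{(n-2)/2}v(\mu\xeuc)$, and transplant it through the boundary chart $\Phi_{1,z}$ centered at a boundary point $z$ outside $K_i$. Such a point exists because $\partial\cil$ is noncompact, and the support avoids $K_i$ for $\mu$ large. The quotient in \eqref{y2} is invariant under this scaling in the flat case. Using \eqref{cl}, the metric coefficients satisfy $|g_{ij}-\delta_{ij}|\le C_0(|\xeuc'|+t)=O(\mu^{-1})$ on the support, and $J_z,J_z^\partial\to1$. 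The curvature terms $R_g$ and $H_g$ are bounded by Definition~\ref{bg}\ref{cv}, so
\[
\int R_gu^2=O(\mu^{-2}),\qquad \int_{\partial}H_gu^2=O(\mu^{-1}).
\]
Hence the quotient tends to that of $v$, giving $Y_\consy(\cil\setminus K_i)\le\Lambda^+_{n,\consy}+2\delta$ for every $i$, and the second inequality follows.

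\textbf{Item \ref{73}.} This is the main obstacle. Our approach is a cutoff splitting. Choose $i$ with $Y_\consy(\cil\setminus K_i)\ge Y_{\consy,\infty}/2>0$ and a cutoff $\chi\in C_c^\infty(\cil)$ with $0\le\chi\le1$, $\chi=1$ on $K_i$, and $|\nabla\chi|\le C$. Write $u=\chi u+(1-\chi)u$, or better, use $\phi_1^2+\phi_2^2=1$ with $\phi_1=\chi$ and $\phi_2$ supported off $K_i$.

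\emph{Energy split.} The IMS-type identity gives
\[
E_g(u)=E_g(\phi_1u)+E_g(\phi_2u)-\int(|\nabla\phi_1|^2+|\nabla\phi_2|^2)u^2 .
\]
The last term is bounded by $C\int_{\spt\nabla\phi}u^2$.

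\emph{Far piece.} We have $E_g(\phi_2u)\ge\frac{Y_{\consy,\infty}}{2}D(\phi_2u)^{2/\dcrit}$, where $D$ denotes the denominator in \eqref{y2}.

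\emph{Compact piece.} On the compact set $\spt\phi_1$, the bounds on $R_g$ and $H_g$ together with \eqref{n3} give
\[
E_g(\phi_1u)\ge\tfrac12\|\nabla(\phi_1u)\|_2^2-C\|\phi_1u\|_2^2 .
\]
The $L^2$ norm is controlled by Hölder on a set of finite volume, $\|\phi_1u\|_2^2\le C_K\|\phi_1 u\|_{\dcrit}^2\le C_KD(\phi_1u)^{2/\dcrit}$.

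\emph{Combining.} The error term $\int_{\spt\nabla\phi}u^2$ is likewise bounded by $C\,D(u)^{2/\dcrit}$ via Hölder on a compact set. Using $D(u)\le D(\phi_1u)+D(\phi_2u)$ up to constants (by convexity of $s\mapsto s^{\dcrit/2}$ and $\phi_1^2+\phi_2^2=1$), we obtain $E_g(u)\ge -C\,D(u)^{2/\dcrit}$, so $Y_\consy(\cil)\ge -C>-\infty$.

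The delicate point is checking the convexity-based comparison of the denominators for the boundary term with exponent $\dcrit$ on the $L^{\dcritb}$ norm. For this we would use $\|\phi_1u\|^{\dcrit}+\|\phi_2u\|^{\dcrit}\ge c\|u\|^{\dcrit}$, which follows from $\|u\|^2_{\dcritb}\le\|\phi_1u\|^2_{\dcritb}+\|\phi_2u\|^2_{\dcritb}$ via the triangle inequality in $L^{\dcritb/2}$.
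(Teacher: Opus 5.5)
Your proposal is correct and follows the paper's proof essentially step for step: the constants $\kappa_{1,+},\kappa_1$ for (i), rescaling through a boundary chart $\Phi_{1,z}$ centered at a boundary point outside $K_i$ for (iii), and the IMS-type splitting with a nonnegative exterior piece for (ii). One correction in (ii): the denominator comparison you flag as the delicate point goes the wrong way and is not needed. You only need upper bounds on the negative terms, namely $\|\phi_1u\|_{L^2}^2\le\int_{\operatorname{supp}\phi_1}u^2\,\mathrm{d}v_g\le\operatorname{Vol}_g(\operatorname{supp}\phi_1)^{2/n}\|u\|_{L^{2^\ast}}^2\le C\,D(u)^{2/2^\ast}$ (equivalently $D(\phi_1u)\le D(u)$, since $0\le\phi_1\le1$), after which $E_g(\phi_2u)\ge0$ is simply discarded, exactly as in the paper. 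The paper also takes $\phi_1=\sin\psi$ and $\phi_2=\cos\psi$, with $\psi=\pi/2$ near the compact set; this keeps both cutoffs smooth, gives $\phi_2=0$ on a neighborhood of $K_i$, and avoids the regularity issue with $\sqrt{1-\chi^2}$.
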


\begin{proof}

{\it Proof of \ref{89}.} By \eqref{k} and \eqref{44} with $q=2$, we have
\begin{align*}
 &\left(\|v\|_{L^{{\dcrit}}(\mathbb R^n_+)}^{{\dcrit}}
 +\consy\|v(\cdot,0)\|_{L^{{\dcritb}}(\mathbb R^{n-1})}^{{\dcrit}}\right)^{2/{\dcrit}}\\
 &\qquad\leq\|v\|_{L^{{\dcrit}}(\mathbb R^n_+)}^2
 +\consy^{2/{\dcrit}}\|v(\cdot,0)\|_{L^{{\dcritb}}(\mathbb R^{n-1})}^2\\
 &\qquad\leq
 \left(\consobbm+ \consy^{(n-2)/n}\consobb\right)
 \int_{\mathbb R^n_+}|\nabla v|^2\,\din x.
\end{align*}
Then
\[
 0<\left(\consobbm
 +\consy^{(n-2)/n}\consobb\right)^{-1}
 \leq\Lambda^+_{n,\consy}<\infty.
\]

\medskip

\noindent {\it Proof of \ref{73}.} Since $Y_{\consy,\infty}(\cil)>0$, choose $L\in\mathbb N$ sufficiently large that
\[
Y_{\consy}(\cil\setminus\overline{\exa_L})>0.
\]

Choose $\psi\in C_c^\infty(\exa_{L+1})$ with $0\le\psi\le\pi/2$ and $\psi=\pi/2$ on a neighborhood of $\overline{\exa_L}$. Set
\[
\chi_0=\sin\psi,\qquad \chi_1=\cos\psi.
\]
Then $\chi_0$ has compact support in $\exa_{L+1}$, $\chi_1=0$ near $\overline{\exa_L}$, and
\[
\begin{gathered}
\chi_0^2+\chi_1^2=1,\qquad
\chi_0\nabla_g\chi_0+\chi_1\nabla_g\chi_1=0,\\
|\nabla_g\chi_0|^2+|\nabla_g\chi_1|^2=|\nabla_g\psi|^2.
\end{gathered}
\]

For $u\in C_c^\infty(\cil)$, we have
\[
\begin{aligned}
\sum_{i=0}^1|\nabla_g(\chi_i u)|^2
&=\sum_{i=0}^1\left(\chi_i^2|\nabla_gu|^2
+u^2|\nabla_g\chi_i|^2
+2\chi_i u\langle\nabla_gu,\nabla_g\chi_i\rangle_g\right)\\
&=|\nabla_gu|^2+u^2|\nabla_g\psi|^2.
\end{aligned}
\]
Therefore,
\begin{equation}\label{86}
E_g(u)=E_g(\chi_0u)+E_g(\chi_1u)
-\int_{\cil}u^2|\nabla_g\psi|^2\,\mathrm{d}v_g.
\end{equation}
We now estimate the right-hand side. By the definition of the exterior level,
\begin{equation}\label{87}
E_g(\chi_1u)\ge
Y_{\consy}(\cil\setminus\overline{\exa_L})
\opnj(\chi_1u)^{2/{\dcrit}}\ge0.
\end{equation}
{    This inequality is immediate if $\chi_1u\equiv0$.}

Moreover, the bounded geometry assumptions imply that $$
C_1:=\frac{n-2}{4(n-1)}\|R_g^-\|_{L^\infty(\cil)}, \qquad  C_2:=\frac{n-2}{2(n-1)}\|H_g^-\|_{L^\infty(\partial\cil)}
$$
are finite. Taking $\epsilon_0=\frac{1}{2(C_2+1)}$ in Lemma~\ref{23}, we obtain
\begin{equation}\label{88}
\begin{aligned}
E_g(\chi_0u)
&\ge\|\nabla_g(\chi_0u)\|_{L^2(\cil)}^2
-C_1\|\chi_0u\|_{L^2(\cil)}^2
-C_2\|\chi_0u\|_{L^2(\partial\cil)}^2\\
&\ge\frac12\|\nabla_g(\chi_0u)\|_{L^2(\cil)}^2
-C_3\|\chi_0u\|_{L^2(\cil)}^2\\
&\ge-C_3\int_{\exa_{L+1}}u^2\,\mathrm{d}v_g,
\end{aligned}
\end{equation}
where $C_3>0$ is finite and independent of $u$. Combining \eqref{86}--\eqref{88} yields
\[
E_g(u)\ge
-\left(C_3+\|\nabla_g\psi\|_{L^\infty(\cil)}^2\right)
\int_{\exa_{L+1}}u^2\,\mathrm{d}v_g.
\]

By H\"older's inequality and the definition of $\opnj$,
\[
\begin{aligned}
\int_{\exa_{L+1}}u^2\,\mathrm{d}v_g
&\le\operatorname{Vol}_g(\exa_{L+1})^{2/n}
\left(\int_{\exa_{L+1}}|u|^{{\dcrit}}\,\mathrm{d}v_g\right)^{2/{\dcrit}}\\
&\le\operatorname{Vol}_g(\exa_{L+1})^{2/n}
\|u\|_{L^{{\dcrit}}(\cil)}^2\\
&\le\operatorname{Vol}_g(\exa_{L+1})^{2/n}
\opnj(u)^{2/{\dcrit}}.
\end{aligned}
\]
Thus, for every $u\in C_c^\infty(\cil){    \setminus\{0\}}$,
\[
\frac{E_g(u)}{\opnj(u)^{2/{\dcrit}}}
\ge-\left(C_3+\|\nabla_g\psi\|_{L^\infty(\cil)}^2\right)
\operatorname{Vol}_g(\exa_{L+1})^{2/n}.
\]

Taking the infimum gives
\[
Y_{\consy}(\cil)
\ge-\left({    C_3}+\|\nabla_g\psi\|_{L^\infty(\cil)}^2\right)
\operatorname{Vol}_g(\exa_{L+1})^{2/n}>-\infty.
\]

\medskip

\noindent {\it Proof of \ref{74}.} The inequality ${Y}_{\consy}(\cil)\leq {Y}_{\consy,\infty}(\cil)$ follows from the definition. We now prove that
$$
 {Y}_{\consy,\infty}(\cil)\le \Lambda^+_{n,\consy}.
$$

Fix $i\geq1$. Since $\partial\cil$ is noncompact, choose
\[
 z\in\partial\cil\setminus K_i.
\]
Let $0\ne v\in C_c^\infty(\overline{\mathbb R^n_+})$ and choose $L>0$
such that
\[
 \operatorname{supp}v\subset B_L^{n-1}\times[0,L).
\]
For the boundary normal chart $\Phi_{1,z}$ in \eqref{ch}, choose
$\epsilon>0$ so small that
\[
 \Phi_{1,z}\left(\epsilon(\operatorname{supp}v)\right)
 \subset\cil\setminus K_i,
 \qquad
 \epsilon L<r_1/2.
\]

Define
\[
 u_{\epsilon}\bigl(\Phi_{1,z}(\epsilon \xeuc)\bigr)
 :=\epsilon^{-(n-2)/2}v(\xeuc),
\]
and extend it by zero outside the chart. Then $u_{\epsilon}\in C_c^\infty(\cil\setminus K_i)\setminus\{0\}$. Moreover,
\begin{align}
 \int_{\cil}|\nabla_gu_{\epsilon}|^2\,\din v_g
 &=\int_{\mathbb R^n_+} (\nabla v)^T (\xeuc ^\prime,t)
 [g_{ij}]^{-1} (\epsilon (\xeuc ^\prime,t)) \nabla v  (\xeuc ^\prime,t)J(\epsilon (\xeuc ^\prime,t))\,\din \xeuc ^\prime \din t
 = \int_{\mathbb R^n_+}|\nabla v|^2\, \din \xeuc ^\prime \din t+O_{v}(\epsilon),\nonumber\\
 \int_{\cil}|u_{\epsilon}|^{{\dcrit}}\,\din v_g
 &=\int_{\mathbb R^n_+}|v|^{{\dcrit}}{    J}(\epsilon (\xeuc ^\prime,t))\,\din \xeuc
 =\int_{\mathbb R^n_+}|v|^{{\dcrit}}\,\din \xeuc+O_{v}(\epsilon),\nonumber\\
 \int_{\partial\cil}|u_{\epsilon}|^{{\dcritb}}\,\din s_g
 &=\int_{\mathbb R^{n-1}}|v({\xeuc ^\prime},0)|^{{\dcritb}}
 {    J^\partial}(\epsilon {\xeuc ^\prime})\,\din {\xeuc ^\prime}
 =\int_{\mathbb R^{n-1}}|v({\xeuc ^\prime},0)|^{{\dcritb}}\,\din {\xeuc ^\prime}
 +O_{v}(\epsilon),
 \label{m5}
\end{align}
where $\din v_g = J \din \xeuc ^\prime \din t$ and $\din s _g = J^\partial \din \xeuc ^\prime$.

We also have
\[
{
 \int_{\cil}u_{\epsilon}^{2}\,\din v_g
 =\epsilon^2\int_{\mathbb R^n_+}|v|^2J(\epsilon (\xeuc ^\prime,t))\,\din \xeuc ^\prime \din t,
 \qquad
 \int_{\partial\cil}u_{\epsilon}^{2}\,\din s_g
 =\epsilon\int_{\mathbb R^{n-1}}|v({\xeuc ^\prime},0)|^2
 J^\partial(\epsilon {\xeuc ^\prime})\,\din {\xeuc ^\prime}.
}
\]

Then
\begin{align}
 &\lim_{\epsilon\downarrow0}
 \frac{E_g(u_{\epsilon})}
 {\left(
 \|u_{\epsilon }\|_{L^{{\dcrit}}(\cil)}^{{\dcrit}}
 +\consy\|u_{\epsilon}\|_{L^{\dcritb}(\partial\cil)}^{{\dcrit}}
 \right)^{2/{\dcrit}}}\nonumber\\
 &\qquad=
 \frac{\displaystyle\int_{\mathbb R^n_+}|\nabla v|^2\,\din x}
 {\left(
 \|v\|_{L^{{\dcrit}}(\mathbb R^n_+)}^{\dcrit}
 +\consy\|v(\cdot,0)\|_{L^{\dcritb}(\mathbb R^{n-1})}^{\dcrit}
 \right)^{2/\dcrit}}.
 \label{m6}
\end{align}

Since $v\ne0$ is arbitrary,
\[
 {Y}_{\consy}(\cil\setminus K_i)\leq\Lambda^+_{n,\consy}
 \quad\forall i\geq1.
\]
Letting $i\to \infty$ completes the proof of \ref{74}.\end{proof}

%
%

Define
\begin{equation}
 \lambdaj
 :=\inf_{u\in W^{1,2}_{\partial}(\exa_j)\setminus\{0\}}
 Q (u),
 \label{q1}
\end{equation}
where
\begin{gather*}
Q (u)
 :=\frac{E_g(u)}{\opnj (u)^{{2}/{\dcrit}}}, \qquad \opnj (u)
 :=\|u\|_{L^{\dcrit}(\exa_j)}^{\dcrit}
 +\consy\|   u\|_{L^{\dcritb}(\exa_j\cap\partial\cil)}^{\dcrit}.
 \end{gather*}
\begin{lemma}\label{l5}
Assume that the conditions of Definition~\ref{bg} hold and that $n\ge3$ and $\consy\ge0$, and use the definitions in
\eqref{y2} and \eqref{q1}. Then
\[
 \lambdaj\searrow Y_{\consy}(\cil)
 ,
\]
where the limit is understood in $[-\infty,\infty)$.
\end{lemma}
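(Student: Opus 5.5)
We prove the three properties in turn: monotonicity of $\lambda_j$ in $j$, the lower bound $\lambda_j\ge Y_{\consy}(\cil)$, and convergence to $Y_{\consy}(\cil)$.

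\textbf{Monotonicity.} Since $\exa_j\subset\exa_{j+1}$, every element of $C_c^\infty(\exa_j)$ is also an element of $C_c^\infty(\exa_{j+1})$. Under zero extension this gives $W^{1,2}_\partial(\exa_j)\subset W^{1,2}_\partial(\exa_{j+1})$. Neither $E_g(u)$ nor $\opnj(u)$ changes under zero extension, so the infimum over the larger class is no larger, and $\lambda_{j+1}\le\lambda_j$.

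\textbf{Lower bound.} Take $u\in W^{1,2}_\partial(\exa_j)\setminus\{0\}$ and choose $u_m\in C_c^\infty(\exa_j)$ with $u_m\to u$ in $W^{1,2}$.
\begin{itemize}
\item The Sobolev inequality \eqref{27} with $q=2$ gives convergence of the $L^{\dcrit}$ norms.
\item The trace inequality \eqref{iq} with $q=2$ gives convergence of the $L^{\dcritb}(\partial\cil)$ norms.
\item Lemma~\ref{23} gives convergence of $\int_{\partial\cil}H_gu_m^2$.
\item Since $R_g$ and $H_g$ are bounded by Definition~\ref{bg}\,\ref{cv}, $E_g(u_m)\to E_g(u)$.
\end{itemize}
Each $u_m$ lies in $C_c^\infty(\cil)$ (its zero extension is smooth, because its support is compactly contained in $\exa_j$). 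Hence $Q(u_m)\ge Y_{\consy}(\cil)$, where for these $u_m$ the functional $\opnj$ coincides with the global normalization in \eqref{y2}. Passing to the limit, using $\opnj(u)>0$, gives $Q(u)\ge Y_{\consy}(\cil)$. Therefore $\lambda_j\ge Y_{\consy}(\cil)$ for every $j$. The same density argument shows that $\lambda_j$ can be computed using only $C_c^\infty(\exa_j)$.

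\textbf{Convergence.} Fix a number $a>Y_{\consy}(\cil)$; if $Y_{\consy}(\cil)=-\infty$, take $a$ arbitrary. Pick $u\in C_c^\infty(\cil)\setminus\{0\}$ with quotient less than $a$. The support of $u$ is compact, so it lies in some $\exa_{j_1}$, since the balls $\exa_j$ exhaust $\cil$. Then $u\in C_c^\infty(\exa_j)$ for all $j\ge j_1$, and so $\lambda_j\le Q(u)<a$ for all $j\ge j_1$. Together with monotonicity and the lower bound, this proves $\lambda_j\searrow Y_{\consy}(\cil)$ in $[-\infty,\infty)$.

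\textbf{Main obstacle.} The only delicate point is justifying that the quotient is continuous along the approximating sequence, specifically the boundary terms and $\int_{\partial\cil}H_gu^2$. This is handled by Proposition~\ref{pr} and Lemma~\ref{23}, both of which extend to $W^{1,2}(\cil)$ by density. Note that no assumption on the sign of $Y_{\consy,\infty}(\cil)$ is needed, because the limit is allowed to equal $-\infty$.
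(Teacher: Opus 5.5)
Your proposal is correct and follows essentially the same route as the paper. Monotonicity comes from the inclusion $W^{1,2}_\partial(\exa_j)\subset W^{1,2}_\partial(\exa_{j+1})$. The lower bound $\lambda_j\ge Y_{\consy}(\cil)$ comes from density of $C_c^\infty(\exa_j)$ together with continuity of $E_g$ and $\opnj$ (Propositions~\ref{pr} and \ref{26}). Convergence to $Y_{\consy}(\cil)$ comes from the fact that any $\varphi\in C_c^\infty(\cil)$ has support compactly contained in $\exa_j$ for large $j$. Your explicit use of Lemma~\ref{23} for the boundary term $\int_{\partial\cil}H_gu^2\,\din s_g$ just fills in a detail the paper leaves implicit.
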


\begin{proof}

Since $C^{\infty} _c (\exa_{j}) \subset C^{\infty} _c (\exa_{k}) $ for $j<k$, {the zero extensions also give $W^{1,2}_{\partial}(\exa_j)\subset W^{1,2}_{\partial}(\exa_k)$. Hence,} we have ${\lambdaj}   \ge
 {    \lambda_k}$. Thus 
\begin{equation}\label{29}
 {\lambdaj}   \searrow\ell\in[-\infty,+\infty).
\end{equation}

For $u\in W^{1,2}_{\partial}(\exa_j)\setminus\{0\}$,
choose $u_m\in C_c^\infty(\exa_j)$ with
$u_m\to u$ in $W^{1,2}(\exa_j)$, and
\[
 E_g(u_m)\to E_g(u),\qquad
 {\opnj}(u_m)
 \to{\opnj}(u)>0,
\]
by Propositions~\ref{pr} and \ref{26}. Hence
\[
 Y_{\consy}(\cil)\le
 \frac{E_g(u_m)}{{\opnj}(u_m)^{2/\dcrit}}
 \to    {Q}(u).
\]
Then
 \[Y_{\consy}(\cil)\le{\lambdaj}    \to \ell,
\]
by \eqref{29}.

We now prove that 
\begin{equation}\label{30}
Y_{\consy}(\cil) \ge \ell
\end{equation}

Let $0\ne\varphi\in C_c^\infty(\cil)$. There exists $j_0>0$ such that, for every $j>j_0$,
\[
 \operatorname{supp}\varphi\Subset\exa_j
\]

Thus
\[
 \ell= \lim_{j\to\infty}{\lambdaj}   
 \le {Q}(\varphi).
\]
This proves \eqref{30} and the lemma. \end{proof}

By \eqref{r2} and Lemma~\ref{l5}, there exists $j_0\in\mathbb N$ such that
\begin{equation}\label{r3}
 {\lambdaj}   <\Lambda^+_{n,\consy}
 \quad(j\ge j_0),\qquad
 \sup_{j\ge j_0}|{\lambdaj}   |<\infty.
\end{equation}


 In the proof of the following result, we shall employ the technique introduced by Br\'ezis and Nirenberg in \cite[Lemma~1.2]{BrezisNirenberg1983}; see also   \cite[Theorem~1.1 and Section~3]{ChenSun2019} and \cite[Proposition~3.2]{zbMATH06494616}.
\begin{proposition}\label{r29}
For every $j\ge j_0$, there exists
$ u_j\ge0$, $u_j\in W^{1,2}_{\partial}(\exa_j)$,
such that
\begin{equation}\label{r5}
 \left\{\begin{aligned}
 &\|u_j\|_{L^{\dcrit}(\exa_j)}^{\dcrit}
 +\consy\|   u_j\|_{L^{\dcritb}(\exa_j\cap\partial\cil)}^{\dcrit}=1,\\
 &E_g(u_j)={\lambdaj}   
 \end{aligned}\right.
\end{equation}
and $u_j$ is a weak solution of
\begin{equation}\label{24}
\left\{
\begin{aligned}
 \mathcal L_gu_j
 &={\lambdaj}   u_j^{\dcrit-1}
 &&\text{in }\exa_j\cap\operatorname{int}\cil,\\
 \mathcal B_gu_j
 &={\lambdaj}   \consy
 \|   u_j\|_{L^{\dcritb}(\exa_j\cap\partial\cil)}^{\dcrit-\dcritb}
   u_j^{\dcritb-1}
 &&\text{on }\exa_j\cap\partial\cil,
\end{aligned}
\right.
\end{equation}
where $\consy \ge 0$.

\end{proposition}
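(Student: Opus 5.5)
We use the direct method together with a Brézis--Lieb concentration argument, in which the strict inequality $\lambda_j<\Lambda^+_{n,\consy}$ from \eqref{r3} rules out loss of compactness. Fix $j\ge j_0$ and take a minimizing sequence $v_m\in W^{1,2}_\partial(\exa_j)$ for \eqref{q1}, normalized so that $\opnj(v_m)=1$. Replacing $v_m$ by $|v_m|$ changes neither $E_g$ nor $\opnj$, so we may take $v_m\ge0$. Lemma~\ref{23} (estimate \eqref{n3} with small $\epsilon_0$) and the boundedness of $R_g$ give
\[
E_g(v)\ge \tfrac12\|\nabla_gv\|_{L^2}^2-C\|v\|_{L^2(\exa_j)}^2 .
\]
Since $\vol_g(\exa_j)<\infty$, Hölder's inequality gives $\|v\|_{L^2(\exa_j)}^2\le C_j\|v\|_{L^{\dcrit}}^2\le C_j$. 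Hence $(v_m)$ is bounded in $W^{1,2}_\partial(\exa_j)$. Passing to a subsequence, we obtain $v_m\rightharpoonup u$ weakly in $W^{1,2}$, strongly in $L^2(\exa_j)$ and $L^2(\exa_j\cap\partial\cil)$ (by Rellich on the bounded set $\overline{\exa_j}$, which has bounded geometry), and almost everywhere in $\exa_j$ and on $\exa_j\cap\partial\cil$. The subspace $W^{1,2}_\partial(\exa_j)$ is weakly closed, and the limit satisfies $u\ge0$.

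\textbf{Concentration step (main obstacle).} Write $w_m=v_m-u$. The lower-order terms of $E_g$ converge by the strong $L^2$ convergences, so
\[
E_g(v_m)=E_g(u)+\|\nabla_gw_m\|_{L^2}^2+o(1).
\]
Brézis--Lieb applied to both critical norms gives
\[
\|v_m\|_{\dcrit}^{\dcrit}=\|u\|_{\dcrit}^{\dcrit}+\|w_m\|_{\dcrit}^{\dcrit}+o(1),\qquad \|v_m\|_{\dcritb}^{\dcritb}=\|u\|_{\dcritb}^{\dcritb}+\|w_m\|_{\dcritb}^{\dcritb}+o(1).
\]
The boundary term enters $\opnj$ raised to the power $\dcrit/\dcritb>1$. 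Superadditivity of $t\mapsto t^{\dcrit/\dcritb}$ therefore yields
\[
\opnj(v_m)\le \opnj(u)+\opnj(w_m)+o(1),
\]
which is the direction we need. For $w_m$ we must show
\[
\|\nabla_gw_m\|_{L^2}^2\ge(\Lambda^+_{n,\consy}-\delta)\,\opnj(w_m)^{2/\dcrit}-C_\delta\|w_m\|_{L^2}^2-C_\delta\|w_m\|_{L^2(\partial\cil)}^2,
\]
and both of the last two terms tend to $0$. This is where most of the work lies. We localize with the partition of Lemma~\ref{pt}, using $q=2$ and $\radpa$ close to $1$. On boundary charts we apply the half-space inequality defining $\Lambda^+_{n,\consy}$, via the chart comparisons of Lemmas~\ref{lc} and~\ref{ls}. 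On interior charts, $\Lambda^+_{n,\consy}\le\consobbm^{-1}<\consob^{-1}$ suffices. The delicate part is summing: $\opnj$ mixes an $\dcrit$-norm with an $\dcritb$-norm raised to the power $\dcrit$, so it is not additive. We handle this with the bound $\bigl(\sum a_k\bigr)^{2/\dcrit}\le\sum a_k^{2/\dcrit}$ applied to the pieces $\eta_{h,k}^2|w_m|^2$, in the same way as \eqref{su}, together with the convexity trick \eqref{cx} to absorb the cutoff gradients into $L^2$ terms.

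Given this inequality, set $a=\opnj(u)$ and $b=\lim\opnj(w_m)$ along a subsequence. Then $a+b\ge1$, and
\[
\lambda_j\ge\lambda_j a^{2/\dcrit}+\Lambda^+_{n,\consy}b^{2/\dcrit}.
\]
If $\lambda_j\ge0$, strict concavity of $t\mapsto t^{2/\dcrit}$ together with $\lambda_j<\Lambda^+_{n,\consy}$ forces $b=0$ and $a=1$. If $\lambda_j<0$, we have $\lambda_j a^{2/\dcrit}\ge\lambda_j$ only when $a\le1$, and the inequality again forces $b=0$. In either case $u$ is a minimizer with $\opnj(u)=1$ and $E_g(u)=\lambda_j$, and we set $u_j:=u$.

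\textbf{Euler--Lagrange equation.} For $\varphi\in C_c^\infty(\exa_j)$, differentiate $t\mapsto Q(u_j+t\varphi)$ at $t=0$. Using $\opnj(u_j)=1$, the derivative of $\opnj^{2/\dcrit}$ equals
\[
2\int u_j^{\dcrit-1}\varphi+2\consy\,d_j^{\dcrit-\dcritb}\int_{\partial\cil}u_j^{\dcritb-1}\varphi .
\]
Setting the first variation to zero gives exactly the weak form of \eqref{24}, with multiplier $\lambda_j$. Differentiability of the boundary term at $d_j=0$ is harmless: if $d_j=0$ the boundary term is identically zero, because $\dcrit>\dcritb$.
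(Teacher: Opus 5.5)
Your overall route is the paper's: a normalized nonnegative minimizing sequence, Br\'ezis--Lieb splitting, a localized sharp inequality of the form \eqref{r6} for the remainder, and a dichotomy on the sign of $\lambda_j$. However, the splitting step is wrong as written. Since $\dcrit/\dcritb>1$, superadditivity of $t\mapsto t^{\dcrit/\dcritb}$ gives
\[
\consy\Bigl(\|u\|_{L^{\dcritb}(\partial\cil)}^{\dcritb}+\|w_m\|_{L^{\dcritb}(\partial\cil)}^{\dcritb}\Bigr)^{\dcrit/\dcritb}
\ge\consy\|u\|_{L^{\dcritb}(\partial\cil)}^{\dcrit}+\consy\|w_m\|_{L^{\dcritb}(\partial\cil)}^{\dcrit}.
\]
Combined with Br\'ezis--Lieb, this yields $\opnj(v_m)\ge\opnj(u)+\opnj(w_m)+o(1)$, i.e.\ $a+b\le1$. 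That is the wrong direction for ruling out concentration. The inequality you state is actually false whenever $u$ and $w_m$ both keep boundary $L^{\dcritb}$ mass in the limit, because the superadditivity is then strict. So ``$a+b\ge1$'' is unfounded, and your concluding dichotomy has no valid input.

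The repair, which is what the paper does in \eqref{c3}, is to split $\opnj^{2/\dcrit}$ rather than $\opnj$. Observe that $\opnj(v)^{2/\dcrit}$ is the $\ell^{\dcrit/2}$-norm of the pair $\bigl(\|v\|_{L^{\dcrit}}^2,\ \consy^{2/\dcrit}\|v\|_{L^{\dcritb}(\partial\cil)}^2\bigr)$. Because $\dcrit/2\ge1$ and $\dcritb/2\ge1$, the Br\'ezis--Lieb identities give $\|v_m\|_{L^{\dcrit}}^2\le\|u\|_{L^{\dcrit}}^2+\|w_m\|_{L^{\dcrit}}^2+o(1)$, and the analogous bound holds on the boundary. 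Monotonicity and Minkowski's inequality in $\ell^{\dcrit/2}$ then give $1\le a^{2/\dcrit}+b^{2/\dcrit}$.

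With this, your dichotomy becomes linear. For $\lambda_j\ge0$, the inequality $\lambda_j\ge\lambda_j(1-b^{2/\dcrit})+\Lambda^+_{n,\consy}b^{2/\dcrit}$ forces $b=0$. For $\lambda_j<0$ you also need $a\le1$, which you should justify (by Fatou, or by the reversed splitting above).

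The same Minkowski mechanism is what your summation over charts requires, as in \eqref{32}. The bound $\opnj(w)\le\sum_k\opnj(\eta_kw)$ fails on chart overlaps, since $\sum_k\eta_k^{\dcrit}\le\sum_k\eta_k^2=1$. Instead, use the triangle inequality in $L^{\dcrit/2}$ and $L^{\dcritb/2}$, then recombine in $\ell^{\dcrit/2}$; subadditivity of $t\mapsto t^{2/\dcrit}$ is not the right tool.
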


\begin{proof}

\medskip\noindent\textbf{Step 1.} We prove that, for every $\epsilon\in (0, \Lambda^+_{n,\consy})$,
\begin{equation}\label{r6}
 (\Lambda^+_{n,\consy}-\epsilon)
 {\opnj}(v)^{2/{\dcrit}}
 \le E_g(v)+C_{j,\epsilon}\|v\|_{L^2(\exa_j)}^2,
 \quad v\in W^{1,2}_{\partial}(\exa_j).
\end{equation}

For every $w\in C_c^1(\overline{\mathbb R^n_+})$, we have
\[
\begin{aligned}
&\Lambda^+_{n,\consy}
\left(
\int_{\mathbb R^n_+}|w|^{\dcrit}\,\din x
+\consy\left(\int_{\mathbb R^{n-1}}|w({\xeuc ^\prime},0)|^{\dcritb}
\,\din {\xeuc ^\prime}\right)^{{\dcrit}/{\dcritb}}
\right)^{2/{\dcrit}}\\
&\hspace{3cm}\le
\int_{\mathbb R^n_+}|\nabla w|^2\,\din x.
\end{aligned}
\]

Fix $j$.  Choose $\delta \in(0,1)$ such that
\begin{equation}\label{33}
4 \Lambda^+_{n,\consy} \delta (3-\delta) \le \epsilon 
\end{equation}

Apply
\eqref{mt}, \eqref{22}, and \eqref{dn} on boundary charts of radius
$0<r\le r_{\radpa}$, with $\radpa = (1+\delta)^{2/n}$, from \eqref{rl}.
Since $n\ge3$ and
\[
 (1+\delta)^{-1}\ge1-\delta,
 \qquad
 (1+\delta)^{2/n}\le1+\delta,
 \qquad
 (1+\delta)^{(n-1)/n}\le1+\delta,
\]
we obtain, uniformly for $z\in\partial\cil$,
\[
\begin{gathered}
 (1-\delta)I\le[g_{ij}],\ [g_{ij}]^{-1}
 \le(1+\delta)I,\\
 1-\delta\le J_z,\ J_z^\partial\le1+\delta.
\end{gathered}
\]

For $w\in C_c^1(\Phi_{1,{z}}(Q_{1,r}))$, we have
\begin{equation}\label{75}
\begin{aligned}
&{\opnj}(w)^{2/{\dcrit}}\\
&\le(1+\delta)^{2/{\dcritb}}
\left(
 \int_{\mathbb R^n_+}|w\circ\Phi_{{1},z}|^{\dcrit}\,\din x
 +\consy\left(\int_{\mathbb R^{n-1}}
 |(w\circ\Phi_{{1},z})({\xeuc ^\prime},0)|^{\dcritb}\,\din {\xeuc ^\prime}
 \right)^{{\dcrit}/{\dcritb}}
\right)^{2/{\dcrit}}\\
&\le\frac{(1+\delta)^{2/{\dcritb}}}
{\Lambda^+_{n,\consy}}
\int_{\mathbb R^n_+}|\nabla(w\circ\Phi_{{1},z})|^2\,\din x\\
&\le\frac{(1+\delta)^{2/{\dcritb}}}
{\Lambda^+_{n,\consy}(1-\delta)^2}
\int_\cil|\nabla_gw|^2\,\din v_g.
\end{aligned}
\end{equation}

{Since $2/\dcritb=(n-2)/(n-1)<1$, \eqref{33} gives
\[
 \begin{aligned}
 \Lambda^+_{n,\consy}\frac{(1-\delta)^2}{(1+\delta)^{2/\dcritb}}
 &\ge\Lambda^+_{n,\consy}\frac{(1-\delta)^2}{1+\delta}
 =\Lambda^+_{n,\consy}\left(1-\frac{\delta(3-\delta)}{1+\delta}\right)\\
 &\ge\Lambda^+_{n,\consy}-\epsilon/4.
 \end{aligned}
\]
}

Then, by \eqref{75}, we have
\begin{equation}\label{31}
\left(\Lambda^+_{n,\consy} -\epsilon /4\right){\opnj}(w)^{2/{\dcrit}}
\le\int_\cil|\nabla_gw|^2\,\din v_g.
\end{equation}

{For an interior chart of radius $0<a\le r_{z,\radpa}$ and $w\in C_c^1(\Phi_{0,z}(Q_{0,a}))$, the boundary term vanishes. By \eqref{46},
\[
 \begin{aligned}
 {\opnj}(w)^{2/\dcrit}
 &\le\consob\radpa^n\int_\cil|\nabla_gw|^2\,\din v_g\\
 &\le\frac{(1+\delta)^{2/\dcritb}}{\Lambda^+_{n,\consy}(1-\delta)^2}
 \int_\cil|\nabla_gw|^2\,\din v_g,
 \end{aligned}
\]
where $\consob\le(\Lambda^+_{n,\consy})^{-1}$, $\radpa=(1+\delta)^{2/n}$ and $(1-\delta^2)^2\le1$. Hence \eqref{31} also holds for interior cutoffs.}

Let $v\in C^{1} _c(\exa_j )$, and let $\{\chi _{\alpha}\}_{\alpha \in \mathtt{J}} :=\{\eta _{0,j}\}_{j\in \mathtt{J}_0}\cup \{\eta _{1,j}\}_{j\in \mathtt{J}_1}$ be the partition given in Lemma~\ref{pt} with $q=2$. {Choose $r:=\min\{r_{\radpa},s_0/4\}$ and $s:=r/32$ with the above $\radpa$ and the geometric radius $s_0$ from Lemma~\ref{pt}; \eqref{ms} then holds, and both families satisfy \eqref{31}.} Hence
\[
\begin{aligned}
\|v\|_{L^{\dcrit}(\exa_j)}^2
= &\left\|\sum_\alpha|\chi_\alpha  v|^2
\right\|_{L^{{\dcrit}/2}(\exa_j)}\\
\le & \sum_\alpha\|\chi_\alpha  v\|_{L^{\dcrit}(\exa_j)}^2,
\end{aligned}
\]
and
\[
\begin{aligned}
\|   v\|_{L^{\dcritb}(\exa_j\cap\partial\cil)}^2
=&\left\|\sum_\alpha
|   (\chi_\alpha  v)|^2
\right\|_{L^{{\dcritb}/2}(\exa_j\cap\partial\cil)}\\
\le&\sum_\alpha
\|   (\chi_\alpha  v)\|_{L^{\dcritb}(\exa_j\cap\partial\cil)}^2.
\end{aligned}
\]

Hence the inequality $[(\sum a_i)^r+(\sum b_i)^r]^{1 / r} \leq \sum  (a_i^r+b_i^r)^{1 / r}$, for $a_i,b_i\ge 0$ and $r>1$, gives
\begin{equation}\label{32}
\begin{aligned}
&{\opnj}(v)^{2/{\dcrit}}\\
&=\left[
\bigl(\|v\|_{L^{\dcrit}(\exa_j)}^2\bigr)^{{\dcrit}/2}
+\bigl(\consy^{2/{\dcrit}}
\|   v\|_{L^{\dcritb}(\exa_j\cap\partial\cil)}^2
\bigr)^{{\dcrit}/2}
\right]^{2/{\dcrit}}\\
&\le\left[
\left(\sum_\alpha\|\chi_\alpha  v\|_{L^{\dcrit}(\exa_j)}^2\right)^{{\dcrit}/2}
+\left(\sum_\alpha\consy^{2/{\dcrit}}
\|   (\chi_\alpha   v)\|_{L^{\dcritb}(\exa_j\cap\partial\cil)}^2
\right)^{{\dcrit}/2}
\right]^{2/{\dcrit}}\\
&\le\sum_\alpha
\left(
\|\chi_\alpha  v\|_{L^{\dcrit}(\exa_j)}^{\dcrit}
+\consy\|   (\chi_\alpha  v)\|_{L^{\dcritb}(\exa_j\cap\partial\cil)}^{\dcrit}
\right)^{2/{\dcrit}}\\
&=\sum_\alpha{\opnj}
(\chi_\alpha  v)^{2/{\dcrit}}.
\end{aligned}
\end{equation}

By \eqref{31} and \eqref{32},
\[
(\Lambda^+_{n,\consy}-\epsilon/4)
{\opnj}(v)^{2/{\dcrit}} \le\sum_\alpha
\int_\cil|\nabla_g(\chi_\alpha v)|^2\,\din v_g.
\]
Using $\sum_\alpha\chi_\alpha\nabla_g\chi_\alpha=0$  and \eqref{pw},
\[
\begin{aligned}
\sum_\alpha|\nabla_g(\chi_\alpha  v)|^2
&=|\nabla_g  v|^2
+  v^2\sum_\alpha|\nabla_g\chi_\alpha|^2,\\
\sum_\alpha\int_\cil
|\nabla_g(\chi_\alpha v)|^2\,\din v_g
&\le\int_{\exa_j}|\nabla_gv|^2\,\din v_g
+{\bf c}_2 (\epsilon, n, g )\|v\|_{L^2(\exa_j)}^2.
\end{aligned}
\]

Consequently,
\begin{equation}\label{76}
(\Lambda^+_{n,\consy}-\epsilon/4)
{\opnj}(v)^{2/{\dcrit}}
\le\int_{\exa_j}|\nabla_gv|^2\,\din v_g
+{\bf c}_2 (\epsilon, n, g ) \|v\|_{L^2(\exa_j)}^2.
\end{equation}

 Set $C_2 := \frac{n-2}{4(n-1)} \sup _{B_j}R_g^{-}$ and $C_3:= \frac{n-2}{2(n-1)} \sup _{B_j{\cap\partial\cil}}H_g^{-}$. Applying \eqref{n3} with $\epsilon/[4\Lambda^+_{n,\consy}(1+C_3)]$ gives
\begin{equation}\label{77}
\begin{aligned}
E_g(v)
&\ge\int_{\exa_j}|\nabla_gv|^2\,\din v_g
-C_2\|v\|_{L^2(\exa_j)}^2
-C_3\|   v\|_{L^2(\exa_j\cap\partial\cil)}^2\\
&\ge\left(1-\frac{\epsilon C_3}
{4\Lambda^+_{n,\consy}(1+C_3)}\right)
\int_{\exa_j}|\nabla_gv|^2\,\din v_g
-(C_2 + C_4(\epsilon,g))\|v\|_{L^2(\exa_j)}^2\\
&\ge\left(1-\frac{\epsilon}{4\Lambda^+_{n,\consy}}\right)
\int_{\exa_j}|\nabla_gv|^2\,\din v_g
-C_5\|v\|_{L^2(\exa_j)}^2
\end{aligned}
\end{equation}
where $C_5:= C_2 + C_4$.

Combining \eqref{76} and \eqref{77}, we obtain
\[
\begin{aligned}
&\left(1-\frac{\epsilon}{4\Lambda^+_{n,\consy}}\right)
(\Lambda^+_{n,\consy}-\epsilon/4)
{\opnj}(v)^{2/{\dcrit}}\\
&\qquad\le E_g(v)+\left(C_5{+\left(1-\frac{\epsilon}{4\Lambda^+_{n,\consy}}\right){\bf c}_2(\epsilon,n,g)}\right)\|v\|_{L^2(\exa_j)}^2.
\end{aligned}
\]
Since
\[
\begin{aligned}
\left(1-\frac{\epsilon}{4\Lambda^+_{n,\consy}}\right)
(\Lambda^+_{n,\consy}-\epsilon/4)
&=\Lambda^+_{n,\consy}-\frac{\epsilon}{2}
+\frac{\epsilon^2}{16\Lambda^+_{n,\consy}}\\
&\ge\Lambda^+_{n,\consy}-\epsilon,
\end{aligned}
\]
we obtain \eqref{r6} for $v\in C_c^1(\exa_j)$.

\medskip

\noindent\textbf{Step 2.}
Choose $w_k \in W^{1,2} _\partial (\exa_j) $ with $w_k\ge0$ such that
\[
 {\opnj}(w_k)
 =\|w_k\|_{L^{\dcrit}(\exa_j)}^{\dcrit}
 +\consy\|  w_k\|_{L^{\dcritb}(\exa_j\cap\partial\cil)}^{\dcrit} =1,
 \qquad E_g(w_k)\to   \lambdaj.
\]

Then,
$$
 \|w_k\|_{L^2(\exa _j)}^2
 \le\vol_g (\exa_j)^{1-2/{\dcrit}}\|w_k\|_{L ^{\dcrit}(B_j)} ^2
 \le\vol _g (\exa_j)^{1-2/{\dcrit}},
 $$

On the other hand, for all $w\in W^{1,2}_\partial (\exa _j)$, we have
\begin{equation*}
 \|\nabla_gw\|_{L^2(\exa _j)}^2
 \le E_g(w) + \sup _{\exa _j}R_g^- \|w\|_{L^2(\exa _j)}^2 + \sup _{\exa _j\cap \partial \cil} H_g^-  \| w\|_{L^2(\exa _j\cap \partial\cil)}^2 .
 \end{equation*}
Then, by \eqref{n3} with $\epsilon_0 =2^{-1}(1+\sup _{\exa _j\cap \partial \cil} H_g^-)^{-1}$,
\[
 \frac12\|\nabla_gw\|_{L^2(\exa _j)}^2
 \le E_g(w)+\left(\sup_{\exa_j}R_g^-+C_6\sup_{\exa_j\cap\partial\cil}H_g^-\right)\|w\|_{L^2(\exa _j)}^2.
\]

Thus
\begin{equation} \label{35}
\|\nabla_gw\|_{L^2(\exa _j)}^2
 \le {2E_g(w)+C_7\|w\|_{L^2(\exa _j)}^2}
\end{equation}
where $C_7:=  2(1+\sup _{\exa _j} R_g^-+ C_6\sup _{\exa _j\cap \partial \cil} H_g^-)$ and ${C_6=C_6(\epsilon_0,g)}>0$.

Rellich compactness and compactness of the trace yield, after passage to a subsequence,
\begin{equation}\label{r7}
 \begin{aligned}
 w_k&\rightharpoonup u_j
 &&\text{in }W^{1,2}_{\partial}(\exa_j),\\
 w_k&\to    u_j
 &&\text{in }L^2(\exa_j)\text{ and a.e.},\\
    w_k&\to      u_j
 &&\text{in }L^2(\exa_j\cap\partial\cil)\text{ and a.e.}.
 \end{aligned}
\end{equation}

By Fatou's lemma,
\begin{equation}\label{c1}
{\opnj}(u_j)\le1.
\end{equation}

Set  $v_k:=w_k-u_j$. Since 
$$
 \|\nabla_gw_k\|_{L^2(\cil)}^2
 =\|\nabla_gu_j\|_{L^2(\cil)}^2+\|\nabla_gv_k\|_{L^2(\cil)}^2
 +2\int _{\cil }g \left( {\nabla_g}u_j,\nabla_gv_k\right) \, \din v_g
$$
we obtain
\begin{equation}\label{36}
 \lambdaj = E_g(u_j)+\lim _{k\to \infty} E_g(v_k) .
\end{equation}

By Theorem~1 of Br\'ezis and Lieb \cite{BrezisLieb1983}, we obtain
\begin{equation}\label{c2}
\begin{aligned}
\|w_k\|_{L^{\dcrit}(\exa_j)}^{\dcrit}
&=\|u_j\|_{L^{\dcrit}(\exa_j)}^{\dcrit}
 +\|v_k\|_{L^{\dcrit}(\exa_j)}^{\dcrit}+o(1),\\
\|w_k\|_{L^{\dcritb}(\exa_j\cap\partial\cil)}^{\dcritb}
&=\|u_j\|_{L^{\dcritb}(\exa_j\cap\partial\cil)}^{\dcritb}
 +\|v_k\|_{L^{\dcritb}(\exa_j\cap\partial\cil)}^{\dcritb}+o(1).
\end{aligned}
\end{equation}

By \eqref{c2} and the inequality 
$$
[(a_1+a_2)+c(b_1+b_2)^{r / s}]^{1 / r} \leq(a_1+c b_1^{r / s})^{1 / r}+(a_2+c b_2^{r / s})^{1 / r}
, \quad a_i,b_i,c\ge 0, r,s>1,
$$
 we obtain, for $r=\dcrit/2$ and $s=\dcritb/2$,
\begin{equation}\label{c3}
\begin{aligned}
1
&={\opnj}(w_k)^{2/\dcrit}\\
&=\Bigl[\|u_j\|_{L^{\dcrit}(\exa_j)}^{\dcrit}
+\|v_k\|_{L^{\dcrit}(\exa_j)}^{\dcrit}
+\consy\Bigl(\|u_j\|_{L^{\dcritb}(\exa_j\cap\partial\cil)}^{\dcritb}
+\|v_k\|_{L^{\dcritb}(\exa_j\cap\partial\cil)}^{\dcritb}\Bigr)^{\dcrit/\dcritb}
+o(1)\Bigr]^{2/\dcrit}\\
&\le{\opnj}(u_j)^{2/\dcrit}
+{\opnj}(v_k)^{2/\dcrit}+o(1).
\end{aligned}
\end{equation}

Passing to a subsequence, assume
\[
{\opnj}(v_k)^{2/\dcrit}\to    d\ge0.
\]
Consequently, \eqref{c3} yields
\begin{equation}\label{c4}
1\le{\opnj}(u_j)^{2/\dcrit}+d.
\end{equation}

For $0<\epsilon<\Lambda^+_{n,\consy}$, applying \eqref{r6}
to $v_k$ and using \eqref{r7} gives
\[
(\Lambda^+_{n,\consy}-\epsilon)d
\le\liminf_{k\to\infty}E_g(v_k).
\]
Letting $\epsilon\downarrow0$ and using \eqref{36} together with $\lambdaj<\Lambda^+_{n,\consy}$ from \eqref{r3}, we obtain
\begin{equation}\label{c5}
\lambdaj\ge E_g(u_j)+\Lambda^+_{n,\consy}d
\ge\lambdaj {\opnj}(u_j)^{2/\dcrit}
+\Lambda^+_{n,\consy}d.
\end{equation}
If $\lambdaj\ge0$, \eqref{c4} and \eqref{c5} imply
\[
\lambdaj\ge\lambdaj +(\Lambda^+_{n,\consy}-\lambdaj)d.
\]
If $\lambdaj<0$, \eqref{c1} and \eqref{c5} imply
\[
\lambdaj\ge\lambdaj+\Lambda^+_{n,\consy}d.
\]
In both cases, $\Lambda^+_{n,\consy}>\lambdaj$ and $\Lambda^+_{n,\consy}>0$ imply $d=0$.

Therefore
\begin{equation}\label{c6}
\|v_k\|_{L^{\dcrit}(\exa_j)}\to0,\qquad
\consy \|v_k\|_{L^{\dcritb}(\exa_j\cap\partial\cil)} ^{\dcrit}\to0.
\end{equation}

\medskip

\noindent\textbf{Step 3.}
By \eqref{c2}, \eqref{c6}, and the normalization of $w_k$,
\[
{\opnj}(u_j)=1,
\qquad u_j\ge0,\quad u_j\not\equiv0.
\]
By \eqref{r6}, \eqref{r7}, and \eqref{c6},
\[
0\le\liminf_{k\to\infty}E_g(v_k).
\]
Therefore, by \eqref{36},
\[
\lambdaj\le E_g(u_j)
=\lambdaj-\lim_{k\to\infty}E_g(v_k)
\le\lambdaj,
\]
so $E_g(u_j)=\lambdaj$.

Since $u_j$ minimizes \eqref{q1},
for every $\varphi\in W^{1,2}_{\partial}(\exa_j)$,
\[
\begin{aligned}
&\int_{\exa_j}\left(\langle\nabla_gu_j,\nabla_g\varphi\rangle_g
+\frac{n-2}{4(n-1)}R_gu_j\varphi\right)\,\din v_g
+\frac{n-2}{2(n-1)}\int_{\exa_j\cap\partial\cil}
H_gu_j\varphi\,\din s_g\\
&\qquad=\lambdaj\int_{\exa_j}u_j^{\dcrit-1}\varphi\,\din v_g
+\lambdaj\consy\|u_j\|_{L^{\dcritb}(\exa_j\cap\partial\cil)}^{\dcrit-\dcritb}
\int_{\exa_j\cap\partial\cil}u_j^{\dcritb-1}\varphi\,\din s_g.
\end{aligned}
\]
This is the weak formulation of \eqref{24}.\end{proof}

\section{Uniform local bounds} \label{71}

Let $u_j\in W^{1,2}_\partial (\exa _j)$ be the solutions obtained in Proposition~\ref{r29}.

Set
\[
 \gamma_j:=\consy\|   u_j\|_{L^{\dcritb}(\exa_j\cap\partial\cil)}^{{\dcrit}-{\dcritb}}.
\]
Then
\begin{equation}\label{r10}
 \|u_j\|_{L^{\dcrit}(\exa_j)}^{\dcrit}
 +\consy\|   u_j\|_{L^{\dcritb}(\exa_j\cap\partial\cil)}^{\dcrit}=1,
 \qquad 0\le\gamma_j\le\consy^{{\dcritb}/{\dcrit}}.
\end{equation}


 To prove the following proposition, we follow the strategy of Wei \cite[Lemma~2.5 and Theorem~4.1]{Wei2019}; see also \cite{MarinoWinkert2019}.
\begin{proposition}\label{r12}
Under \eqref{r2} and the assumptions of Definition~\ref{bg}, with $\consy \ge 0$, we have
\begin{equation}\label{r14}
\displaystyle
 \sup_{j\ge j_0}\|u_j\|_{L^\infty(\exa_{j-1})}<\infty.
\end{equation}

\end{proposition}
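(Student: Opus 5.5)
The plan is to run a Brezis--Kato plus Moser iteration for the weak formulation of \eqref{24}. The constants must be uniform in $j\ge j_0$ and in the center of the local ball. Uniformity comes from three sources: the bounded geometry in Definition~\ref{bg}, the bound $\sup_{j\ge j_0}|\lambda_j|<\infty$ and the strict gap $\lambda_j<\Lambda^+_{n,\consy}$ from \eqref{r3}, and the normalization \eqref{r10}. The first observation is that \eqref{r6} can be proved with a constant independent of $j$. In Step~1 of Proposition~\ref{r29} the constant $C_{j,\epsilon}$ comes only from ${\bf c}_2(\epsilon,n,g)$, from \eqref{n3}, and from $\sup R_g^-$, $\sup H_g^-$. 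All of these are controlled globally by Definition~\ref{bg}\,\ref{cv}. Hence we obtain
\[
(\Lambda^+_{n,\consy}-\epsilon)\,\opnj(w)^{2/\dcrit}\le E_g(w)+C_\epsilon\|w\|_{L^2(\cil)}^2,\qquad w\in W^{1,2}_\partial(\exa_j),
\]
with $C_\epsilon=C(\epsilon,n,g)$.

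\textbf{Step 1 (improved integrability, Brezis--Kato).} Fix $x\in\exa_{j-1}$ and a cutoff $\eta$ supported in $B_g(x,1)\subset\exa_j$ with $|\nabla_g\eta|\le C$. Test \eqref{24} with $\varphi=\eta^2u_j\min(u_j,L)^{2s}$ for a small $s>0$, and set $w=\eta u_j\min(u_j,L)^{s}$. The standard computation gives
\[
E_g(w)\le\tfrac{(1+s)^2}{1+2s}\lambda_j\Bigl(\int u_j^{\dcrit-2}w^2\,\din v_g+\gamma_j\int_{\partial\cil}u_j^{\dcritb-2}w^2\,\din s_g\Bigr)+C\!\int_{\spt\eta}u_j^{2}\min(u_j,L)^{2s}.
\]
When $\lambda_j\le0$ the bracket contributes with a favorable sign. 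When $\lambda_j>0$ I will bound it using Hölder twice. Put $a=\|u_j\|_{L^{\dcrit}}^{\dcrit-2}$ and $b=\gamma_j\|u_j\|_{L^{\dcritb}(\partial\cil)}^{\dcritb-2}=\consy\|u_j\|_{L^{\dcritb}(\partial\cil)}^{\dcrit-2}$. Hölder on each integral and then discrete Hölder with exponents $(n/2,\dcrit/2)$ give
\[
a\|w\|_{\dcrit}^2+b\|w\|_{\dcritb}^2\le\bigl(a^{n/2}+\consy^{-n/(n-2)}b^{n/2}\bigr)^{2/n}\,\opnj(w)^{2/\dcrit}.
\]
Here one must check that the weight $\consy^{2/\dcrit}$ attached to $\|w\|_{\dcritb}^2$ inside $\opnj(w)^{2/\dcrit}$ makes the dual sum equal $\opnj(u_j)^{2/n}=1$, using \eqref{r10}. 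With $\epsilon$ and $s$ chosen so that $\tfrac{(1+s)^2}{1+2s}\lambda_j\le\Lambda^+_{n,\consy}-2\epsilon$ (possible uniformly by \eqref{r3} and the boundedness of $\lambda_j$), the uniform inequality above absorbs the nonlinear term. The remaining lower-order terms $R_g$, $H_g$ and $\nabla\eta$ are handled by \eqref{n3} and Lemma~\ref{23}. Letting $L\to\infty$ yields
\[
\|u_j\|_{L^{\dcrit(1+s)}(B_g(x,1/2))}\le C
\]
uniformly in $j$ and $x$, since $\|u_j\|_{L^2(B_g(x,1))}$ is bounded by the volume bound \eqref{14} and by $\|u_j\|_{\dcrit}\le1$.

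\textbf{Step 2 (Moser iteration).} Now the potentials $V=\lambda_ju_j^{\dcrit-2}$ in the interior and $W=\lambda_j\gamma_ju_j^{\dcritb-2}$ on the boundary lie in $L^{n/2+\delta}(B_g(x,1/2))$ and $L^{n-1+\delta}(B_g(x,1/2)\cap\partial\cil)$, with uniform bounds. I will iterate with test functions $\eta_k^2u_j\min(u_j,L)^{2\beta_k}$ on shrinking balls $B_g(x,r_k)$, where $r_k\downarrow1/4$. The localized Sobolev and trace inequalities, Lemmas~\ref{lc}--\ref{ls} together with Propositions~\ref{pr} and \ref{26}, are uniform in the chart center. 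Using them, the potential terms are absorbed by interpolation: Hölder with the extra integrability $\delta$ followed by Young with a small coefficient. This gives the usual recursion
\[
\|u_j\|_{L^{\dcrit\beta_{k+1}}}\le (C\beta_k)^{C/\beta_k}\|u_j\|_{L^{\dcrit\beta_k}},
\]
and hence $\|u_j\|_{L^\infty(B_g(x,1/4))}\le C$. Covering $\exa_{j-1}$ by such balls gives \eqref{r14}.

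\textbf{Main obstacle.} I expect Step~1 to be the hardest, specifically the precise Hölder pairing of the interior and boundary nonlinearities with the combined functional $\opnj$. The absorption factor must be controlled by exactly $\lambda_j/\Lambda^+_{n,\consy}<1$ and not by a cruder constant. If the exact duality fails, I will fall back on a local version: use absolute continuity of $u_j^{\dcrit}$ and choose small radii. That is not uniform in $j$, so I would instead exploit that $\lambda_j\le\lambda_{j_0}<\Lambda^+_{n,\consy}$ leaves a fixed margin. This margin lets me lose a constant factor close to $1$ in the Hölder step.
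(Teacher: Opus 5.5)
Your proposal is correct and is essentially the paper's proof. Your Brezis--Kato step matches Steps~1--3 of the paper: the test function $\eta^2u_ju_{j,L}^{2\beta-2}$ with $\beta=1+s$, the H\"older pairing \eqref{r16} against the normalization \eqref{r10}, and absorption through the fixed margin $\sup_{j\ge j_0}\lambda_j^+=\lambda_{j_0}^+<\Lambda^+_{n,\consy}$. Your Moser iteration matches Steps~4--6, including the boundary integrability that comes from the $\consy$-term of $\opnj$.

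The remaining differences are minor:
\begin{itemize}
\item The paper works in small charts with the local inequality \eqref{r15}, not on unit balls with a $j$-uniform version of \eqref{r6}.
\item Those small charts also supply the uniform volume bound you attribute to \eqref{14}, which only covers small interior balls.
\item The correct dual weight in your discrete H\"older step is $\consy^{-(n-2)/2}$, not $\consy^{-n/(n-2)}$.
\end{itemize}
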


\begin{proof}
\medskip\noindent\textbf{Step 1.} In Lemma~\ref{pt}, choose $\radpa>1$ close to $1$ with $r_{\radpa}<1/4$. This gives the partition of unity 
$$
\begin{aligned}
\eta _{h,i} & : \Phi _{{h},z_{h,i}} ( Q_{h,r_h})\to [0,1], \qquad  \spt {\eta_{h,i}} = \Phi _{h,z_{h,i}} ({\overline{Q_{h,3r_h/4}}}) ,\quad i\in \mathtt{J}_h,  \ h=0,1
 \end{aligned} 
$$ 
where $r_1,r_0\in(0,1/4)$ are fixed. Moreover,
\begin{equation}\label{50}
\cil = \bigcup _{i\in \mathtt{J}_1} \Phi _{1,z_{1,i}} (Q_{1,r_1 /2})  \cup \bigcup _{i\in \mathtt{J}_0} \Phi _{0,z_{0,i}} (Q_{{    0},r_0 /2}) 
\end{equation}
 By \eqref{22} and \eqref{45},
 \begin{equation}\label{51}
\vol _g( \Phi _{h,z_{h,i}} {(Q_{h,r_h})} ) \le \radpa ^{n/2}  \operatorname{Vol} (Q_{h,r_h})
 \end{equation}

If ${\Phi _{h,z_{h,i}}(Q_{h,r_h})}\cap \exa_{j-1}{\ne\varnothing}$, then
$$
\Phi _{h,z_{h,i}} (Q_{h,r_h})\subset  \exa_{j}
$$

{Indeed, for $x=\Phi_{1,z}(\xeuc',t)$ the boundary radial geodesic followed by the normal segment gives
\[
 d_g(z,x)\le |\xeuc'|+t<2r_1,
 \qquad \operatorname{diam}_g\Phi_{1,z}(Q_{1,r_1})\le4r_1<1.
\]
For an interior chart,
\[
 d_g(z,\Phi_{0,z}(\xeuc))=|\xeuc|<r_0,
 \qquad \operatorname{diam}_g\Phi_{0,z}(Q_{0,r_0})\le2r_0<1.
\]
Thus, if $x$ belongs to the outer chart and to $\exa_{j-1}$, every $y$ in that chart satisfies
\[
 d_g(\centro,y)\le d_g(\centro,x)+d_g(x,y)<(j-1)+1=j.
\]
All charts used below are selected by this nonempty-intersection condition.}

For simplicity, write
$$
V:= \Phi _{h,z_{h,i}} (Q_{h,r_h})\qquad W:= \Phi_{h,z_{h,i}}(Q_{h,3r_h/4}) .
$$

By \eqref{31}, for $\epsilon\in(0,\Lambda^+_{n,\consy})$ {fixed as in Step~3, choose $\radpa>1$ sufficiently close to $1$ and the chart radii sufficiently small. The boundary and interior versions of \eqref{31} then give}
\begin{equation}\label{r15}
 (\Lambda^+_{n,\consy}-\epsilon)
 \left(\|f\|_{L^{\dcrit}(V   )}^{\dcrit}
 +\consy\|   f\|_{L^{\dcritb}(V   \cap\partial\cil)}^{\dcrit}\right)^{2/{\dcrit}}
 \le\int_{V   }|\nabla_gf|^2\,\din v_g,
 \quad f\in W^{1,2}(V   ),\quad \operatorname{supp}f\Subset V   
\end{equation}
Moreover, by H\"older's inequality and $\gamma _j = \consy \|u_j\|^{\dcrit - \dcritb}_{L^{\dcritb} (\exa _j \cap {\partial}\cil)}$, we have
\begin{equation}\label{r16}
 \begin{aligned}
 &\int_{V   }u_j^{{\dcrit}-2}f^2\,\din v_g
 +\gamma_j\int_{V   \cap\partial\cil}
    u_j^{{\dcritb}-2}   f^2\,\din s_g\\
 &\le\|u_j\|_{L^{\dcrit}(\exa _j)}^{{\dcrit}-2}\|f\|_{L^{\dcrit}(V   )}^2
 +\consy\|   u_j\|_{L^{\dcritb}(\exa _j{\cap\partial\cil})}^{{\dcrit}-2}
 \|   f\|_{L^{\dcritb} (V   {\cap\partial\cil})}^2\\
 &\le
 \left(\|u_j\|_{L^{\dcrit}  (\exa _j)}^{\dcrit}
 +\consy\|   u_j\|_{L^{\dcritb}(\exa _j{\cap\partial\cil})}^{\dcrit}\right)^{({\dcrit}-2)/{\dcrit}}
 \left(\|f\|_{L^{\dcrit}(V   )}^{\dcrit}
 +\consy\|   f\|_{L^{\dcritb} (V   {\cap\partial\cil})}^{\dcrit}\right)^{2/{\dcrit}}\\
 &=\left(\|f\|_{L^{\dcrit}(V   )}^{\dcrit}
 +\consy\|   f\|_{L^{\dcritb} (V   {\cap\partial\cil})}^{\dcrit}\right)^{2/{\dcrit}}.
 \end{aligned}
\end{equation}

\medskip

\noindent\textbf{Step 2.} For $\beta\ge1$, define
\[
 \theta_\beta:=\frac{2\beta-1}{\beta^2}.
\]

Choose $\eta\in C_c^\infty(V)$,
with
\[
 0\le\eta\le1,\qquad
 \operatorname{supp}\eta\Subset V,\qquad
 |\nabla_g\eta|\le C/r_h.
 \]
 Set
\[
 u_{j,L}:=\min\{u_j,L\},\quad
 w_L:=u_j u_{j,L}^{\beta-1},\quad
 \varphi_L:=\eta^2u_j u_{j,L}^{2\beta-2}.
\]

We will show that, if $\theta _\beta \ge \delta >0$, then
 \begin{equation}\label{r18}
 \begin{aligned}
 &(\theta_\beta-\delta)
 \int_{V   }|\nabla_g(\eta w_L)|^2\,\din v_g\\
 &\le{\lambdaj ^+}   \left[
 \int_{V   }u_j^{{\dcrit}-2}(\eta w_L)^2\,\din v_g
 +\gamma_j\int_{V   \cap\partial\cil}
 (   u_j)^{{\dcritb}-2}(   \eta w_L)^2\,\din s_g
 \right]\\
 &\quad+C_{\beta,\delta}\int_{V   }
 (\eta^2+|\nabla_g\eta|^2)w_L^2\,\din v_g  
 \end{aligned}
\end{equation}

For $\varphi_L=\eta^2u_ju_{j,L}^{2\beta-2}$, \eqref{24} gives
\begin{align}
&\int_{V   }\langle\nabla_gu_j,\nabla_g\varphi_L\rangle_g\,\din v_g \nonumber\\
&=\lambdaj\left[
\int_{V   }u_j^{{\dcrit}-2}(\eta w_L)^2\,\din v_g
+\gamma_j\int_{V   \cap\partial\cil}
 u_j^{{\dcritb}-2}(\eta w_L)^2\,\din s_g\right] \nonumber \\
&\quad-\frac{n-2}{4(n-1)}\int_{V   }R_g(\eta w_L)^2\,\din v_g
-\frac{n-2}{2(n-1)}\int_{V   \cap\partial\cil}
 H_g(\eta w_L)^2\,\din s_g \nonumber\\
 &\le{\lambdaj ^+}   \left[
\int_{V   }u_j^{{\dcrit}-2}(\eta w_L)^2\,\din v_g
+\gamma_j\int_{V   \cap\partial\cil}
 u_j^{{\dcritb}-2}(\eta w_L)^2\,\din s_g\right] \nonumber\\
&\quad+ C_1
\int_{V   }\eta^2w_L^2\,\din v_g 
 + C_2
\int_{V   \cap\partial\cil}(\eta w_L)^2\,\din s_g,\label{40}
\end{align}
where $C_1 := \frac{n-2}{4(n-1)}\|R_g^-\|_{L^\infty(\cil)}$ and  $C_2:= \frac{n-2}{2(n-1)}\|H_g^-\|_{L^\infty(\partial\cil)}$.

Almost everywhere, we have
\begin{equation}\label{62}
\nabla_gw_L
=\left\{\begin{aligned}
&\beta u_j^{\beta-1}\nabla_gu_j & &u_j\le L,\\
&L^{\beta-1}\nabla_gu_j &&u_j>L,
\end{aligned}\right.
\end{equation}
so
\begin{equation*}
u_{j,L}^{\beta-1}|\nabla_gu_j|\le|\nabla_gw_L| .
\end{equation*}

We also have
\begin{equation*}
\langle\nabla_gu_j,\nabla_g(u_ju_{j,L}^{2\beta-2})\rangle_g
=\left\{\begin{aligned}
&\dfrac{2\beta-1}{\beta^2}|\nabla_gw_L|^2,&&u_j\le L,\\
&|\nabla_gw_L|^2,&&u_j>L .
\end{aligned}\right.
\end{equation*}

Then
\[
\begin{aligned}
\langle\nabla_gu_j,\nabla_g\varphi_L\rangle_g
= & \left\langle  \nabla _g u_j , \eta^2\nabla_g(u_ju_{j,L}^{2\beta-2})
+2\eta u_ju_{j,L}^{2\beta-2}\nabla_g\eta \right\rangle_g\\
\ge&\theta_\beta\eta^2|\nabla_gw_L|^2
-2\eta w_L|\nabla_gw_L||\nabla_g\eta|,\\
\ge& \theta_\beta\eta^2|\nabla_gw_L|^2 -\frac{\delta}{4}\eta^2|\nabla_gw_L|^2
{-}\frac4\delta w_L^2|\nabla_g\eta|^2.
\end{aligned}
\]
Consequently,
\begin{equation}\label{37}
\begin{aligned}
&\left(\theta_\beta-\frac\delta4\right)
\int_{V   }\eta^2|\nabla_gw_L|^2\,\din v_g\\
&\quad\le\int_{V   }\langle\nabla_gu_j,\nabla_g\varphi_L\rangle_g\,\din v_g
+\frac4\delta\int_{V   }w_L^2|\nabla_g\eta|^2\,\din v_g.
\end{aligned}
\end{equation}

On the other hand, from $\nabla_g(\eta w_L)=\eta\nabla_gw_L+w_L\nabla_g\eta,$  we obtain
\begin{equation}\label{39}
|\nabla_g(\eta w_L)|^2
\le\left(1+\frac\delta4\right)\eta^2|\nabla_gw_L|^2
+\left(1+\frac4\delta\right)w_L^2|\nabla_g\eta|^2,
\end{equation}

From \eqref{37} and \eqref{39},
\begin{equation}\label{41}
\begin{aligned}
&\frac{\theta_\beta-\delta/4}{1+\delta/4}
\int_{V   }|\nabla_g(\eta w_L)|^2\,\din v_g\\
&\quad\le\int_{V   }\langle\nabla_gu_j,\nabla_g\varphi_L\rangle_g\,\din v_g
+\frac4\delta\left(1+\theta_\beta-\frac\delta4\right)
\int_{V   }w_L^2|\nabla_g\eta|^2\,\din v_g.
\end{aligned}
\end{equation}

To complete the proof of \eqref{r18}, choose $\epsilon_0=\frac{\delta}{4\left(1+C_2\right)}>0$ in \eqref{n3}. Then
\begin{equation}\label{43}
\begin{aligned}
&\int_{V   \cap\partial\cil}(\eta w_L)^2\,\din s_g\\
&\quad\le\frac{\delta}{4\left(1+C_2\right)} \int_{V   }|\nabla_g(\eta w_L)|^2\,\din v_g
+C_3(\delta, C_2, g)
\int_{V   }\eta^2w_L^2\,\din v_g .
\end{aligned}
\end{equation}

Since
$$
\frac{\theta_\beta-\delta/4}{1+\delta/4}
-\left(\theta_\beta-\frac\delta2\right)=\frac{\frac\delta4(1-\theta_\beta)+\frac{\delta^2}{8}}
{1+\delta/4}\ge0,
$$
we have
\begin{equation}\label{42}
\frac{\theta_\beta-\delta/4}{1+\delta/4}
-\frac{\delta C_2}{4\left(1+C_2\right)}\ge\theta_\beta-\frac\delta2-\frac\delta4
>\theta_\beta-\delta\ge0.
\end{equation}

Thus \eqref{r18} follows from \eqref{41}, \eqref{40}, \eqref{43}, and \eqref{42}. Furthermore, $C_{\beta,\delta}=\max\{
\frac4\delta(1+\theta_\beta-\frac\delta4)+C_1
+C_2C_3\}$.

\medskip

\noindent {\bf Step 3.}  By \eqref{r2} and \eqref{r3}, fix $\epsilon  $, $\delta >0$, and $\beta >1$ so that $\beta < \dcrit /2$ and
\begin{equation}\label{r17}
 (\theta _\beta-2{\delta})(\Lambda^+_{n,\consy}-\epsilon)
 >\sup_{j\ge j_0}{\lambdaj ^+}   .
\end{equation}

 We will show that
\begin{equation}\label{r19}\left(\|\eta w_L\|_{L^{\dcrit}(V   )}^{\dcrit}
 +\consy\|   (\eta w_L)\|_{L^{\dcritb}(V   \cap\partial\cil)}^{\dcrit}\right)^{2/{\dcrit}}\le C_3,
\end{equation}
where $C_3>0$ is independent of $j,h,i,L$, and $u_j$.


By \eqref{r15}, \eqref{r16}, and \eqref{r18},
\[
\begin{aligned}
&(\theta_\beta-\delta)(\Lambda^+_{n,\consy}-\epsilon)
\left(\|\eta w_L\|_{L^{\dcrit}(V   )}^{\dcrit}
+\consy\|\eta w_L \|_{L^{\dcritb}(V   \cap\partial\cil)}^{\dcrit}\right)^{2/\dcrit}\\
&\le(\theta_\beta-\delta)
\int_{V   }|\nabla_g (\eta w_L)|^2\,\din v_g\\
&\le{\lambdaj ^+}   \left[
\int_{V   }u_j^{\dcrit-2}(\eta w_L)^2\,\din v_g
+\gamma_j\int_{V   \cap\partial\cil}
 u_j^{\dcritb-2}(\eta w_L)^2\,\din s_g\right]\\
&\qquad+C_{\beta,\delta}\int_{V   }
(\eta^2+|\nabla_g\eta|^2)w_L^2\,\din v_g\\
&\le{\lambdaj ^+}   
\left(\|\eta w_L\|_{L^{\dcrit}(V   )}^{\dcrit}
+\consy\|\eta w_L\|_{L^{\dcritb}(V   \cap\partial\cil)}^{\dcrit}\right)^{2/\dcrit}\\
&\qquad+C_{\beta,\delta}\int_{V   }
(\eta^2+|\nabla_g\eta|^2)w_L^2\,\din v_g.
\end{aligned}
\]
Furthermore, by \eqref{r17},
\[
\begin{aligned}
&(\theta_\beta-\delta)(\Lambda^+_{n,\consy}-\epsilon)
-{\lambdaj ^+}   \\
&\quad=\big[(\theta_\beta-2\delta)(\Lambda^+_{n,\consy}-\epsilon)
-{\lambdaj ^+}   \big]
+\delta(\Lambda^+_{n,\consy}-\epsilon)\\
&\quad\ge\delta(\Lambda^+_{n,\consy}-\epsilon)>0,
\qquad j\ge j_0.
\end{aligned}
\]
Hence
\begin{equation}\label{48}
\begin{aligned}
&\left(\|\eta w_L\|_{L^{\dcrit}(V   )}^{\dcrit}
+\consy\|\eta w_L\|_{L^{\dcritb}(V   \cap\partial\cil)}^{\dcrit}\right)^{2/\dcrit}\\
&\quad\le\frac{C_{\beta,\delta}}
{\delta(\Lambda^+_{n,\consy}-\epsilon)}
\int_{V   }(\eta^2+|\nabla_g\eta|^2)w_L^2\,\din v_g\\
&\quad\le\frac{C_{\beta,\delta}}
{\delta(\Lambda^+_{n,\consy}-\epsilon)}
\left(\|\eta\|_{L^\infty(V   )}^2
+\|\nabla_g\eta\|_{L^\infty(V   )}^2\right)
\int_{V   }w_L^2\,\din v_g.
\end{aligned}
\end{equation}

Since $w_L=u_j\min\{u_j,L\}^{\beta-1}
\le u_j^\beta$, we have
\begin{equation}\label{49}
\begin{aligned}
\int_{V   }w_L^{2}\,\din v_g
\le & \int_{V   }u_j^{2\beta}\,\din v_g
\le\left(\int_{V   }u_j^{\dcrit}\,\din v_g\right)^{2\beta/\dcrit}
\left(\int_{V   }1\,\din v_g\right)^{1-2\beta/\dcrit}\\
=&\operatorname{Vol}_g(V   )^{1-2\beta/\dcrit}
\|u_j\|_{L^{\dcrit}(V   )}^{2\beta}\\
\le&\operatorname{Vol}_g(V   )^{1-2\beta/\dcrit}
\|u_j\|_{L^{\dcrit}(\exa_j)}^{2\beta}
\le\operatorname{Vol}_g(V   )^{1-2\beta/\dcrit},
\end{aligned}
\end{equation}
where the last step follows from \eqref{r5}.

By \eqref{48} and \eqref{49},
\[
\begin{aligned}
&\left(\|\eta w_L\|_{L^{\dcrit}(V   )}^{\dcrit}
+\consy\|\eta w_L\|_{L^{\dcritb}(V   \cap\partial\cil)}^{\dcrit}\right)^{2/\dcrit}\\
&\quad\le\frac{C_{\beta,\delta}}
{\delta(\Lambda^+_{n,\consy}-\epsilon)}
\left(\|\eta\|_{L^\infty(V   )}^2
+\|\nabla_g\eta\|_{L^\infty(V   )}^2\right)
\operatorname{Vol}_g(V   )^{1-2\beta/\dcrit}
\end{aligned}
\]
By \eqref{50} and \eqref{51}, the right-hand side is bounded by a constant independent of $\exa_j$ and $u_j$. This proves \eqref{r19}.

\medskip

\noindent {\bf Step 4.} For $f\in W^{1,2}(\cil)$ with $\spt f\Subset W$, $m=\frac{\beta+1}{\beta-1}$, and $0<h<1$, we will show that 
\begin{equation}\label{r22}
\begin{aligned}
&\int_Wu_j^{\dcrit-2}f^2\,\din v_g
 +\gamma_j\int_{W\cap\partial\cil}u_j^{\dcritb-2}f^2\,\din s_g\\
&\qquad\le h\int_\cil|\nabla_g f|^2\,\din v_g
 +C_4(1+h^{-m})\int_\cil f^2\,\din v_g, 
\end{aligned}
\end{equation}
where $C_4>0$ is independent of $u_j,h,f$ {and of the chart; it may depend on the fixed $\beta$}.

Recall that
\[
 V=\Phi_{h,z_{h,i}}(Q_{h,r_h}),\qquad
 W=\Phi_{h,z_{h,i}}(Q_{h,3r_h/4}),
\]
with $V\cap\exa_{j-1}\ne\varnothing$.  Thus $V\subset\exa_j$ by Step~1.

Recall that the cutoff in Step~2 satisfies $0\le\eta\le1$,
$\operatorname{supp}\eta\Subset V$, and $\|\nabla_g\eta\|_\infty\le C/r_h$. If, in addition, $\eta=1$ on $W$, then \eqref{r19}, with $L\to\infty$, gives
\begin{align}
\left(\int_Wu_j^{\beta\dcrit}\,\din v_g\right)^{2/\dcrit}
&\le C_3,\label{52}\\
\consy^{2/\dcrit}
\left(\int_{W\cap\partial\cil}u_j^{\beta\dcritb}\,\din s_g\right)^{2/\dcritb}
&\le C_3  . \label{53}
\end{align}

Retain the fixed sets $W\Subset V\subset\exa_j$ and the number $\beta>1$ from Steps~2--4.
For the exponent computations define
\[
 p_1=\frac{n\beta}{2},\qquad
 p_2       =(n-1)\beta,\qquad
 p_3  =\frac{2p_1}{p_1-1},\qquad
 p_5    =\frac{2p_2       }{p_2       -1}.
\]

Using $(\dcrit-2) p_1=\frac{4}{n-2} \frac{n \beta}{2}=\beta \dcrit$ and \eqref{52},
\begin{equation} \label{57}
\begin{aligned}
\|u_j^{\dcrit-2}\|_{L^{p_1}(W)}
= \|u_j\|_{L^{\beta\dcrit}(W)}^{\dcrit-2}\le C_3^{{    \frac{\dcrit-2}{2\beta}}}.
\end{aligned}
\end{equation}

  Since $(\dcritb-2)p_2=\frac{2}{n-2}(n-1)\beta=\beta\dcritb$, \eqref{53} gives
\begin{equation*} 
\begin{aligned}
\|[ \consy ^{1/(\beta \dcrit)}u_j]^{\dcritb-2}\|_{L^{p_2}(W\cap\partial\cil)}
=\consy ^{\frac{\dcritb-2}{\beta \dcrit}}\|u_j\|_{L^{\beta\dcritb}(W\cap\partial\cil)}^{\dcritb-2}\le C_3^{\frac{\dcritb-2}{{    2\beta}}}.
\end{aligned}
\end{equation*} 
Hence
\begin{equation} \label{59}
\|\gamma _j u_j^{\dcritb-2}\|_{L^{p_2}(W\cap\partial\cil)}
\le \mathtt{C} _5,
\end{equation} 
because $\gamma _j \le \consy ^{\dcritb/\dcrit}$, where
$$
 \mathtt{C}_5:=  \consy ^{ \frac{2}{\beta {    \dcrit}} + \frac{{\dcritb}(\beta -1)}{\dcrit \beta} }C_3^{\frac{\dcritb-2}{{    2\beta}}}.
$$
For $\consy=0$, $\gamma_j=\mathtt C_5=0$.

Let $f\in W^{1,2}(\cil)$ satisfy $\operatorname{supp}f\Subset W$. Extend $f$ by zero outside $W$.
Propositions~\ref{26}, \ref{pr} (with $q=2$), and \eqref{54} in Lemma~\ref{23} 
provide constants independent of the chosen chart such that
\[
\begin{aligned}
\|f\|_{L^{\dcrit}(\cil)}^2
&\le C_6\left(\|\nabla_g f\|_{L^2(\cil)}^2+\|f\|_{L^2(\cil)}^2\right),\\
\|f\|_{L^{\dcritb}(\partial\cil)}^2
&\le C_6\left(\|\nabla_g f\|_{L^2(\cil)}^2+\|f\|_{L^2(\cil)}^2\right),\\
\|f\|_{L^2(\partial\cil)}^2
&\le C_6\int_\cil|f||\nabla_g f|\,\din v_g
+C_6\int_\cil|f|^2\,\din v_g\\
&\le C_6\|f\|_{L^2(\cil)}\|\nabla_g f\|_{L^2(\cil)}
+C_6\|f\|_{L^2(\cil)}^2,
\end{aligned}
\]
where $C_6>0$ is independent of $f$.

Put 
\[
{\bf X}:=\|\nabla_g f\|_{L^2(\cil)}^2+\|f\|_{L^2(\cil)}^2,
\qquad {\bf Y}:=\|f\|_{L^2(\cil)}^2.
\]
Then 
\[
\begin{aligned}
\|f\|_{L^{\dcrit}(\cil)}^2
+\|f\|_{L^{\dcritb}(\partial\cil)}^2&\le 2C_6{\bf X},\\
\|f\|_{L^2(\partial\cil)}^2
&\le 2 C_6\sqrt{{\bf X}{\bf Y}}.
\end{aligned}
\]

From $\frac{p_3(1-1/\beta)}{2}+
\frac{p_3}{\beta\dcrit} =1$ and H\"older's inequality, we obtain
\[
\begin{aligned}
\int_\cil |f|^{p_3  }\,\din v_g
&=\int_\cil|f|^{p_3  (1-1/\beta)}
                   |f|^{p_3  /\beta}\,\din v_g\\
&\le\left(\int_\cil|f|^2\,\din v_g\right)^{{p_3  (1-1/\beta)}/{2}}
     \left(\int_\cil|f|^{\dcrit}\,\din v_g\right)^{{p_3  }/{(\beta\dcrit)}}.
\end{aligned}
\]
Then
\begin{equation} \label{55}
\begin{aligned}
\|f\|_{L^{p_3  }(\cil)}^2
\le {\bf Y}^{1-1/\beta}(C_6{\bf X})^{1/\beta}
\end{aligned}
\end{equation}

By $\frac{p_5    (1-1/\beta)}{2}+
\frac{p_5    }{\beta\dcritb}=1$ and H\"older's inequality, we obtain
\[
\begin{aligned}
\int_{\partial\cil}|f|^{p_5    }\,\din s_g
&=\int_{\partial\cil}|f|^{p_5    (1-1/\beta)}
                   |f|^{p_5    /\beta}\,\din s_g\\
&\le\left(\int_{\partial\cil}|f|^2\,\din s_g\right)^{{p_5    (1-1/\beta)}/{2}}
 \left(\int_{\partial\cil}|f|^{\dcritb}\,\din s_g\right)^{{p_5    }/{(\beta\dcritb)}}.
\end{aligned}
\]
Then
\begin{equation}\label{56}
\begin{aligned}
\|f\|_{L^{p_5    }(\partial\cil)}^2
&\le\|f\|_{L^2(\partial\cil)}^{2(1-1/\beta)}
\|f\|_{L^{\dcritb}(\partial\cil)}^{2/\beta}\\
&\le (2C_6{\bf X}^{1/2}{\bf Y}^{1/2})^{1-1/\beta}(C_6{\bf X})^{1/\beta}\\
&\le C_6 2^{1-1/\beta} {\bf X}^{(\beta+1)/(2\beta)}{\bf Y}^{(\beta-1)/(2\beta)}.
\end{aligned}
\end{equation}

 Since $p_1/(p_1-1)=p_3  /2$, \eqref{57} and \eqref{55} give
\begin{equation}\label{60}
\begin{aligned}
\int_Wu_j^{\dcrit-2}f^2\,\din v_g
&\le\left(\int_Wu_j^{(\dcrit-2)p_1}\,\din v_g\right)^{1/p_1}
    \left(\int_W |f|^{2p_1/(p_1-1)}\,\din v_g\right)^{(p_1-1)/p_1}\\
&=\|u_j^{\dcrit-2}\|_{L^{p_1}(W)}\|f\|_{L^{p_3  }(W)}^2\\
&\le C_3^{{    \frac{\dcrit-2}{2\beta}}}C_6^{1/\beta}{\bf X}^{1/\beta}{\bf Y}^{1-1/\beta}.
\end{aligned}
\end{equation}

Since $p_2       /(p_2       -1)=p_5    /2$, \eqref{59} and \eqref{56} give
\begin{equation}\label{61}
\begin{aligned}
\gamma _j\int_{W\cap\partial\cil}u_j^{\dcritb-2}f^2\,\din s_g
\le & \left[\int_{W\cap\partial\cil}
 \left(\gamma _j u_j^{\dcritb-2}\right)^{p_2       }\,\din s_g\right]^{1/p_2       }\\
&\cdot \left(\int_{W\cap\partial\cil}
 |f|^{2p_2       /(p_2       -1)}\,\din s_g\right)^{(p_2       -1)/p_2       }\\
=&\|\gamma _j u_j^{\dcritb-2}\|_{L^{p_2       }(W\cap\partial\cil)}
  \|f\|_{L^{p_5    }(W\cap\partial\cil)}^2\\
\le & 2^{1-1/\beta} \mathtt{C}_5C_6 {\bf X}^{(\beta+1)/(2\beta)}{\bf Y}^{(\beta-1)/(2\beta)}.
\end{aligned}
\end{equation}

By Young's inequality,
\begin{align*}
c{\bf X}^\theta {\bf Y}^{1-\theta}
&\le \frac h2{\bf X}
 +c ^{1/(1-\theta)}
 \left(\frac{2}{h}\right)^{\theta/(1-\theta)}{\bf Y}\\
&\le\frac h2{\bf X}+(2c)^{1/(1-\theta)} h^{-\theta/(1-\theta)}{\bf Y},
\end{align*}
where $0<\theta<1$, $1>h>0$, and $c\ge0$.

Applying this to \eqref{60} and \eqref{61}, we obtain
\[
\begin{aligned}
\int_Wu_j^{\dcrit-2}f^2\,\din v_g
&\le\frac h2{\bf X}+C_7 h^{-1/(\beta-1)}{\bf Y},\\
\gamma_j\int_{W\cap\partial\cil}u_j^{\dcritb-2}f^2\,\din s_g
&\le\frac h2{\bf X}+\mathtt{C} _8 h^{-(\beta+1)/(\beta-1)}{\bf Y},
\end{aligned}
\]
where $C_7>0$ and $\mathtt C_8\ge0$ are independent of $u_j,h,f$ {and of the chart, with $\beta$ fixed}. 

Therefore,
\[
\begin{aligned}
&\int_Wu_j^{\dcrit-2}f^2\,\din v_g
 +\gamma_j\int_{W\cap\partial\cil}u_j^{\dcritb-2}f^2\,\din s_g\\
&\quad\le h{\bf X}+(C_7 + \mathtt{C}_8)h^{-m}{\bf Y}\\
&\quad=h\|\nabla_gf\|_{L^2(\cil)}^2+h{\bf Y}+(C_7 + \mathtt{C}_8)h^{-m}{\bf Y}\\
&\quad\le h\|\nabla_gf\|_{L^2(\cil)}^2+(C_7 + \mathtt{C}_8+1)(1+h^{-m})\|f\|_{L^2(\cil)}^2,
\end{aligned}
\]
where $m=(\beta+1)/(\beta-1)>1$. This proves \eqref{r22}.


\medskip

\noindent\textbf{Step 5.} Let $k\ge1$, $L>0$, and choose a smooth cutoff $\zeta$ with
$0\le\zeta\le1$, $\zeta =1$ on $W^\prime \Subset W$, and $\operatorname{supp}\zeta\Subset W$.

 We will show that 
\begin{equation}\label{r23}
\begin{aligned}
\|u_j\|_{L^{\dcrit k}(W')}^{2k}
&=\left(\int_{W'}u_j^{\dcrit k}\,\din v_g\right)^{2/\dcrit}\\
&\le C_9 k^{m_1}(1+\|\nabla_g\zeta\|_{L^\infty(\cil)}^2)
\|u_j\|_{L^{2k}(\operatorname{supp}\zeta)}^{2k},
\qquad m_1=\frac{4\beta}{\beta-1},
\end{aligned}
\end{equation}
where $C_9>0$ is independent of $j,k,V$.

Put
\[
 u_{j,L}=\min\{u_j,L\},\qquad
 w_L=u_ju_{j,L}^{k-1},\qquad 
 \varphi_L=\zeta^2u_ju_{j,L}^{2k-2}.
\]

By \eqref{41}, with $\beta=k$ and $\delta=\theta_k$,
\[
\begin{aligned}
\frac3{5k}\int_W|\nabla_g (\zeta w_L)|^2\,\din v_g
 &\le\int_W\langle\nabla_g u_j,\nabla_g\varphi_L\rangle_g\,\din v_g+7k\int_Ww_L^2|\nabla_g\zeta|^2\,\din v_g.
\end{aligned}
\]
since $k\theta_k\ge1$,
\[
\begin{aligned}
\frac{\theta_k-\delta/4}{1+\delta/4} =\frac{3\theta_k/4}{1+\theta_k/4} \ge \frac{3\theta_k}{5}\ge\frac3{5k},
\end{aligned}
\]
and
\[
\begin{aligned}
\frac4{\delta}\left(1+\theta_k-\frac{\delta}{4}\right)
 =\frac4{\theta_k}+3\le4k+3\le7k.
\end{aligned}
\]

By \eqref{40},
\[
\begin{aligned}
\frac3{5k}\|\nabla_g (\zeta w_L)\|_{L^2(W)}^2
 &\le{\lambdaj ^+}   \left[
 \int_Wu_j^{\dcrit-2}(\zeta w_L)^2\,\din v_g
 +\gamma_j\int_{W\cap\partial\cil}u_j^{\dcritb-2}(\zeta w_L)^2\,\din s_g
 \right]\\
 &\quad+C_1\|\zeta w_L\|_{L^2(W)}^2
 +C_2\|\zeta w_L\|_{L^2(W\cap \partial\cil)}^2
 +7k\int_Ww_L^2|\nabla_g\zeta|^2\,\din v_g.
\end{aligned}
\]
Here $C_1,C_2$ are precisely the curvature constants in
\eqref{40}. 

Then, since $\sup _{j\ge j_0}\lambdaj^+ <\infty$ by \eqref{r3},
\begin{equation}\label{64}
\begin{aligned}
\|\nabla_g (\zeta w_L)\|_{L^2(W)} ^2
 \le &C_{10}k^2\Biggl[
 \int_Wu_j^{\dcrit-2}(\zeta w_L)^2\,\din v_g
 +\gamma_j\int_{W\cap\partial\cil}u_j^{\dcritb-2}(\zeta w_L)^2\,\din s_g\\
 &+\|\zeta w_L\|_{L^2(W)}^2
 +\|\zeta w_L\|_{L^2(W\cap \partial\cil)}^2
 +\int_Ww_L^2|\nabla_g\zeta|^2\,\din v_g\Biggr].
\end{aligned}
\end{equation}
The constant $C_{10}$ in this inequality is independent of
$j,h,i,k,L$.

Applying \eqref{r22} and \eqref{n3} to $\zeta w_L$, we obtain, for $0<h<1$ and $\tau>0$,
\begin{equation}\label{63}
\begin{aligned}
\int_Wu_j^{\dcrit-2}(\zeta w_L)^2\,\din v_g
+\gamma_j\int_{W\cap\partial\cil}u_j^{\dcritb-2}(\zeta w_L)^2\,\din s_g
\le & h\|\nabla_g(\zeta w_L)\|_{L^2(\cil)}^2+C_4(1+h^{-m})\|\zeta w_L\|_{L^2(\cil)}^2,\\
\|\zeta w_L\|_{L^2(\partial\cil)}^2
\le & \tau\|\nabla_g(\zeta w_L)\|_{L^2(\cil)}^2
+C_{11}(1+\tau^{-1})\|\zeta w_L\|_{L^2(\cil)}^2,  
\end{aligned}
\end{equation}
where the constant $C_{11}$ is independent of
$j,h,i,k,L$.

From \eqref{64} and \eqref{63},
\[
\begin{aligned}
\|\nabla_g(\zeta w_L)\|_{L^2(\cil)} ^2
&\le C_{12}k^2(h+\tau)\|\nabla_g(\zeta w_L)\|_{L^2(\cil)} ^2\\
&\quad+C_{12}k^2(1+h^{-m}+\tau^{-1})\|\zeta w_L\|_{L^2(\cil)}  ^2
 +C_{12}k^2\int_Ww_L^2|\nabla_g\zeta|^2\,\din v_g,
\end{aligned}
\]
where $C_{12}:=C_{10}[1+2(C_4+C_{11})]$.

For $k\ge1$, choose 
\[
 h=\tau=\frac1{4(C_{12} +1)k^2} 
\]
Then
\[
\begin{aligned}
 k^2h^{-m}&
=[4(C_{12}+1)]^m k^{2+2m},\\
 k^2\tau^{-1 }&\le4(C_{12}+1)k^{2+2m},\\
\end{aligned}
\]
where $m=\frac{\beta+1}{\beta-1}$ and $\beta$ is the fixed number from Step~3.

Then
\[
\begin{aligned}
\|\nabla_g(\zeta w_L)\|_{L^2(\cil)}^2
&\le C_{13}k^{m_1}\int_W
 (\zeta^2+|\nabla_g\zeta|^2)w_L^2\,\din v_g,
\end{aligned}
\]
where $C_{13}=6{C_{12}}[4(C_{12}+1)]^m$ and
$$m_1:=2+2m
=\frac{4\beta}{\beta-1}.$$

Applying \eqref{r15} to $\zeta w_L$ gives
\[
\begin{aligned}
(\Lambda^+_{n,\consy}-\epsilon)
\|\zeta w_L\|_{L^{\dcrit}(V)}^2
&\le(\Lambda^+_{n,\consy}-\epsilon)
\left(\|\zeta w_L\|_{L^{\dcrit}(V)}^{\dcrit}
+\consy\|\zeta w_L\|_{L^{\dcritb}(V\cap\partial\cil)}^{\dcrit}
\right)^{2/\dcrit}\\
&\le\|\nabla_g(\zeta w_L)\|_{L^2(V)}^2\\
&\le C_{13}k^{m_1}(1+\|\nabla_g\zeta\|_{L^\infty(\cil)}^2)
\int_{\operatorname{supp}\zeta}w_L^2\,\din v_g.
\end{aligned}
\]
If $\zeta=1$ on $W'\Subset W$, this yields
\[
\left(\int_{W'}w_L^{\dcrit}\,\din v_g\right)^{2/\dcrit}
\le {\frac{C_{13}}{\Lambda^+_{n,\consy}-\epsilon}}k^{m_1}(1+\|\nabla_g\zeta\|_{L^\infty(\cil)}^2)
\int_{\operatorname{supp}\zeta}w_L^2\,\din v_g.
\]

Starting with $k=1$ and letting $L\to\infty$, we obtain
\begin{equation*}
\begin{aligned}
\|u_j\|_{L^{\dcrit k}(W')}^{2k}
&=\left(\int_{W'}u_j^{\dcrit k}\,\din v_g\right)^{2/\dcrit}\\
&\le {\frac{C_{13}}{\Lambda^+_{n,\consy}-\epsilon}}k^{m_1}(1+\|\nabla_g\zeta\|_{L^\infty(\cil)}^2)
\|u_j\|_{L^{2k}(\operatorname{supp}\zeta)}^{2k},
\end{aligned}
\end{equation*}
with $C_9=C_{13}/(\Lambda^+_{n,\consy}-\epsilon)$. Proceeding inductively, we obtain \eqref{r23}.

\medskip

\noindent\textbf{Step 6.}
Fix $V,W$ as in Step~4 and put
\[
\rho_\ell=\frac12+\frac1{4\cdot2^\ell}, \quad
 D_\ell:=\Phi_{h,z_{h,i}}(Q_{h,\rho_\ell r_h}),
\quad\ell=0,1,2,\ldots.
\]

 Direct subtraction gives
\begin{equation}\label{78}
\begin{aligned}
\rho_\ell-\rho_{\ell+1}
&=\frac1{2^{\ell+2}}-\frac1{2^{\ell+3}}
=\frac1{2^{\ell+3}}.
\end{aligned}
\end{equation}

Let $\psi$ be the smooth cutoff denoted by $\beta$ in Step~4 of Lemma~\ref{pt}. Thus
\[
 0\le\psi\le1,\qquad \psi(t)=1\ (t\le1/2),\qquad
 \psi(t)=0\ (t\ge3/4),\qquad \|\psi'\|_{L^\infty({    \mathbb R})}<\infty.
\]

Define
\[
 \psi_\ell(t):=\psi\left(\frac12+\frac{t-\rho_{\ell+1}r_h}{2(\rho_\ell-\rho_{\ell+1})r_h}\right).
\]

Then
\begin{gather*}
 \psi_\ell(t)=1 \ \text { if }t\le\rho_{\ell+1}r_h, \qquad
 \psi_\ell(t)=0 \ \text { if }  t\ge\frac{\rho_\ell+\rho_{\ell+1}}2r_h,\\
 |\psi_\ell'(t)|\le\frac{\|\psi'\|_{L^\infty({    \mathbb R})}}{2(\rho_\ell-\rho_{\ell+1})r_h}.
\end{gather*}

In the chosen chart, set
\[
\begin{aligned}
 \eta_\ell(\Phi_{1,z_{1,i}}(\xeuc',t))&=\psi_\ell(|\xeuc'|)\psi_\ell(t) &&(h=1),\\
 \eta_\ell(\Phi_{0,z_{0,i}}(\xeuc))&=\psi_\ell(|\xeuc|) &&(h=0),
\end{aligned}
\]
and extend by zero outside the chart. 

We have
\[
 \spt \eta_\ell
 \subset\Phi_{h,z_{h,i}}\left(\overline Q_{h,\frac12(\rho_\ell+\rho_{\ell+1})r_h}\right)
 \Subset\Phi_{h,z_{h,i}}(Q_{h,\rho_\ell r_h})=D_\ell \subset W
\]
and
\[
\begin{aligned}
 |\nabla\eta_\ell|^2&\le\frac{2\|\psi'\|_{L^\infty({    \mathbb R})}  ^2}{4(\rho_\ell-\rho_{\ell+1})^2r_h^2},\\
 |\nabla_g\eta_\ell|^2&\le\radpa|\nabla\eta_\ell|^2
 \le\frac{\radpa\|\psi'\| _{L^\infty({    \mathbb R})} ^2}{2(\rho_\ell-\rho_{\ell+1})^2r_h^2},
\end{aligned}
\]
see \eqref{bgr}.

Then, by \eqref{78},
\[
\begin{aligned}
\|\nabla_g\eta_\ell\|_{L^\infty(\cil)}
\le C_{14} 2^{\ell +3},
\end{aligned}
\]
where the last constant $C_{14}>1$ is independent of $\ell$, $V$, and $u_j$.

Set
\[
\mathtt{A}:=\frac{\dcrit}{2}=\frac{n}{n-2}>1,
\qquad k_\ell:=\mathtt{A}^\ell.
\]

Then
\[
\dcrit k_\ell=2\mathtt{A} k_\ell=2k_{\ell+1},
\qquad k_\ell\ge1.
\]
Apply \eqref{r23} with
$k=k_\ell$, $W'=D_{\ell+1}$, and ${\zeta}=\eta_\ell$,
\[
\begin{aligned}
\|u_j\|_{L^{2k_{\ell+1}}(D_{\ell+1})}^{2k_\ell}
&=\|u_j\|_{L^{\dcrit k_\ell}(D_{\ell+1})}^{2k_\ell}\\
&\le C _9k_\ell^{m_1}
 (1+\|\nabla_g\eta_\ell\|_{L^\infty(\cil)} ^2)
 \|u_j\|_{L^{2k_\ell}(D_\ell)}^{2k_\ell}\\
&\le C_{15} 4^\ell k_\ell^{m_1}
 \|u_j\|_{L^{2k_\ell}(D_\ell)}^{2k_\ell},
\end{aligned}
\]
where $C_{15}:=(C_9+1) C_{14}^{{    2}} 2^7>1$.

Hence
\[
\|u_j\|_{L^{2k_{\ell+1}}(D_{\ell+1})}
\le(C_{15}4^\ell k_\ell^{m_1})^{1/(2k_\ell)}
\|u_j\|_{L^{2k_\ell}(D_\ell)}.
\]
Consequently, by induction, 
\begin{equation} \label{79}
\begin{aligned}
\|u_j\|_{L^{2k_{\overline{m}}}(D_{\overline{m}})}
&\le \|u_j\|_{L^2(D_0)}
 \prod_{\ell=0}^{\overline{m}-1}
 (C_{15}4^\ell k_\ell^{m_1})^{1/(2k_\ell)}\\
&\le \vol _g (V) ^{1/n}\|u_j\|_{L^{\dcrit} (\exa _j)}\prod_{\ell=0}^{\overline{m}-1}
 (C_{15}4^\ell\mathtt{A}^{m_1\ell})^{1/(2k_\ell)}\\
&\le C_{16}\prod_{\ell=0}^{\overline{m}-1}
 (C_{15}4^\ell\mathtt{A}^{m_1\ell})^{1/(2k_\ell)},
\end{aligned}
\end{equation}
where $C_{16}:= \vol _g (V)^{1/n}$, see \eqref{r10}. {    By \eqref{51},
\[
C_{16}\le\radpa^{1/2}\max_{h=0,1}\operatorname{Vol}(Q_{h,r_h})^{1/n},
\]
uniformly in the chart.}

We also have
\[
\begin{aligned}
&\log\prod_{\ell=0}^{\overline{m}-1}
 (C_{15}4^\ell k_\ell^{m_1})^{1/(2k_\ell)}\\
&=\frac12\sum_{\ell=0}^{\overline{m}-1}
 \frac{\log C_{15}+\ell\log4+m_1\log k_\ell}{k_\ell}\\
&=\frac12\sum_{\ell=0}^{\overline{m}-1}
 \frac{\log C_{15}+\ell\log4+m_1\ell\log\mathtt{A}}{\mathtt{A}^\ell}\\
&\le\frac{\log C_{{    15}}}{2}\sum_{\ell=0}^\infty\mathtt{A}^{-\ell}
 +\frac{\log4+m_1\log\mathtt{A}}{2}
 \sum_{\ell=0}^\infty\ell\mathtt{A}^{-\ell}\\
&=\frac{\log C_{15}}{2}\frac{\mathtt{A}}{\mathtt{A}-1}
 +\frac{\log4+m_1\log\mathtt{A}}{2}
 \frac{\mathtt{A}}{(\mathtt{A}-1)^2}=: C_{17}.
\end{aligned}
\]

Letting $\overline{m}\to \infty$ in \eqref{79}, we obtain
\[
\begin{aligned}
\|u_j\|_{L^\infty(\Phi_{h,z_{h,i}}(Q_{h,r_h/2}))}
&=\lim_{m\to\infty}
\|u_j\|_{L^{2k_m}(\Phi_{h,z_{h,i}}(Q_{h,r_h/2}))}\le C_{16}e^{C_{17}}.
\end{aligned}
\]
Together with \eqref{50}, this completes the proof of the proposition.
\end{proof}

\section{Proof of Theorem \ref{r25}}

The global construction below uses an exhaustion and adapts the nonvanishing argument of Wei \cite[Proposition~3.2]{Wei2019}; see also \cite[Lemma~12]{Grosse2013}. A diagonal argument yields a local limit of the exhaustion solutions. If this limit vanished, the normalized mass would escape to infinity, contradicting the strict gap between the global variational level and the level at infinity.
\begin{proof}
\textbf{Step 1.} By Proposition~\ref{r12}, there exist constants $j_0>0$ and $C>0$ such that
\begin{equation}\label{80}
\sup _{j\ge j_0} \|u_j\|_{L^\infty (\exa _{j-1})} <C.
\end{equation}

Moreover, by \eqref{r5},
\begin{equation}\label{81}
\|u_j\|_{L^{\dcrit} (\exa _j)} ^{\dcrit} + \consy  \|u_j\|_{L^{\dcritb} (\exa _j\cap \partial \cil)} ^{\dcrit} =1 .
\end{equation}

Let $L\in \mathbb{N}$ satisfy $L>j_0$. Choose
$\xi\in C_c^\infty(\exa _{L+1})$ such that $\xi|_{\exa _{L}}=1$ and $0\le \xi \le 1$. For $j>L+1$, we have
\[
\begin{aligned}
\langle\nabla_gu_j,\nabla_g(\xi^2u_j)\rangle_g
&=\xi^2|\nabla_gu_j|^2+2\xi u_j\langle\nabla_gu_j,\nabla_g\xi\rangle_g.
\end{aligned}
\]

For the second term,
\[
\begin{aligned}
2\left|\int_{\cil}\xi u_j\langle\nabla_gu_j,\nabla_g\xi\rangle_g\,\din v_g\right|
&\le2\left(\int_{\cil}\xi^2|\nabla_gu_j|^2\,\din v_g\right)^{1/2}
\left(\int_{\cil}u_j^2|\nabla_g\xi|^2\,\din v_g\right)^{1/2}\\
&\le\frac12\int_{\cil}\xi^2|\nabla_gu_j|^2\,\din v_g
+2\int_{\cil}u_j^2|\nabla_g\xi|^2\,\din v_g.
\end{aligned}
\]

Using $\xi ^2 u_j$ as a test function in \eqref{24}, we obtain
\[
\begin{aligned}
 \int_{\exa _j} \xi^2|\nabla_gu_j|^2\,\din v_g
 &=\lambdaj\left[\int_{\exa _j}\xi^2u_j^{\dcrit}\,\din v_g
 +\gamma_j\int_{{\exa _j}\cap \partial\cil}\xi^2u_j^{\dcritb}\,\din s_g\right]\\
 &\quad-2\int_{\exa _j} \xi u_j\langle\nabla_gu_j,\nabla_g\xi\rangle_g\,\din v_g
 -\frac{n-2}{4(n-1)}\int_{\exa _j} R_g\xi^2u_j^2\,\din v_g\\
 &\quad-\frac{n-2}{2(n-1)}\int_{{\exa _j} \cap \partial\cil}H_g\xi^2u_j^2\,\din s_g.
\end{aligned}
\]

The last two estimates, together with \eqref{80}, imply that, for $j>L$,
\begin{equation*}
\int_{B_L} |\nabla_gu_j|^2\,\din v_g \le C_1,
\end{equation*}
where $C_1>0$ is independent of $j$.

Hence there exists a subsequence $\{u_{L_j}\}$ such that
\begin{align*}
&u_{L_j} \rightharpoonup u_L   \qquad \text {in }{    W^{1,2}}(\exa _L)\\
&u_{L_j} \to u_L \qquad   \text { in } L^{2} (\exa _L) \text { and } a.e.,\\
&u_{L_j} \to u_L \qquad  \text { in } L^{2} (\exa _L \cap \partial \cil) \text { and } a.e.
\end{align*}

By a diagonal argument, there exist a subsequence $\{u_m\}$ and a function $u\in W^{1,2}_{\loc} (\cil)$ such that
\begin{align*}
&u_{m} \rightharpoonup u \qquad \text {in }W^{1,2} _{\loc} (\cil)\\
&u_{m} \to u \qquad   \text { in } L^{2}_{\loc} (\cil),\\
&u_{m} \to u \qquad  \text { in } L^{2} _{\loc}( \partial \cil)\\
&u_{m} \to u \qquad  a.e \text { in } \cil .
\end{align*}

Furthermore, by \eqref{80} and \eqref{81},
$$
u\in L^{\infty} (\cil) \cap L^{\dcrit} (\cil).
$$

If $\consy >0$, we may also use \eqref{81} to assume that
$$
\|u_m\|_{L^{\dcritb} ({    \partial\cil})} \to \overline{d} \in [0,\consy ^{-1/\dcrit}].
$$

Therefore, $u$ is a weak solution of \eqref{r26}.

\medskip 

\noindent {\bf Step 2.} We now show that $u\not\equiv 0$. Suppose, for a contradiction, that the limit vanishes identically.

Choose $\psi\in C_c^\infty(\exa_{L+1})$ with $0\le\psi\le\pi/2$ and $\psi=\pi/2$ on a neighborhood of $\overline{\exa_L}$. Set
\[
 \chi_0=\sin\psi,\qquad \chi_1=\cos\psi.
\]
Then $\chi_0=1$ and $\chi_1=0$ near $\overline{\exa_L}$, while $\chi_0=0$ and $\chi_1=1$ outside $\operatorname{supp}\psi$.

We have
\begin{gather*}
 \chi_0^2+\chi_1^2=1,\\
 \chi_0\nabla_g\chi_0+\chi_1\nabla_g\chi_1=0,\qquad
 |\nabla_g\chi_0|^2+|\nabla_g\chi_1|^2=|\nabla_g\psi|^2.
\end{gather*}
Then, for $j>L$,
\[
\begin{aligned}
 \sum_{a=0}^1|\nabla_g(\chi_a u_j)|^2
 &=\sum_{a=0}^1\left(\chi_a^2|\nabla_gu_j|^2
 +u_j^2|\nabla_g\chi_a|^2
 +2\chi_a u_j\langle\nabla_gu_j,\nabla_g\chi_a\rangle_g\right)\\
 &=|\nabla_gu_j|^2+u_j^2|\nabla_g\psi|^2.
\end{aligned}
\]

Hence
\[
 E_g(\chi_0u_j)+E_g(\chi_1u_j)
 =E_g(u_j)+\int_{\operatorname{supp}\psi}u_j^2|\nabla_g\psi|^2\,\din v_g.
\]
We also have
\begin{equation}\label{85}
\begin{aligned}
E_g(\chi_1u_j)
=& \lambdaj+\int_{\operatorname{supp}\psi}u_j^2|\nabla_g\psi|^2\,\din v_g-E_g(\chi_0u_j)\\
\le & \lambdaj+
\left(\|\nabla_g\psi\|_{L^\infty (\cil)} ^2+\frac{n-2}{4(n-1)}\|R_g^-\|_{L^\infty (\cil)}\right)
\|u_j\|_{L^2(\operatorname{supp}\psi)}^2\\
& + \frac{n-2}{2(n-1)}\|H_g^-\|_{L^\infty ({    \partial}\cil)}
\|u_j\|_{L^2(\operatorname{supp}\psi\cap\partial\cil)}^2\\
\le & \lambdaj + o({    1}).
\end{aligned}
\end{equation}

We now prove that
\begin{equation}\label{82}
1-\opnj (\chi _1 u_j) = \opnj ( u_j) -\opnj (\chi _1 u_j) \to 0.
\end{equation}

First, observe that
\begin{equation}\label{83}
\begin{aligned}
 0\le&\int_\cil u_j^{\dcrit}\,\din v_g
 -\int_\cil(\chi_1u_j)^{\dcrit}\,\din v_g\\
 =&\int_{\operatorname{supp}\psi}(1-\chi_1^{\dcrit})u_j^{\dcrit}\,\din v_g\\
 \le &\int_{\operatorname{supp}\psi}u_j^{\dcrit}\,\din v_g{\to}  0.
\end{aligned}
\end{equation}

On the other hand, if $0\le a\le b$, the mean value theorem gives
$$
b^{\dcrit/\dcritb} -a^{\dcrit/\dcritb} \le \frac{\dcrit}{\dcritb}  b^{\dcrit/\dcritb-1} (b-a). 
$$
Hence, by \eqref{81}, for $\consy>0$,
\begin{equation}\label{84}
\begin{aligned}
& \consy\left(\|u_j\|_{L^{\dcritb}(\exa _j \cap {    \partial}\cil) }^{\dcrit} - \|\chi _1u_j\|_{L^{\dcritb}(\exa _j \cap {    \partial}\cil) }^{\dcrit} \right)\\
&\le  \frac{\dcrit}{\dcritb}  \consy \left( \int _{\exa _j \cap {    \partial}\cil}  u_j^{\dcritb} \, \din s _g\right)^{\dcrit/\dcritb-1}  \left[ \int _{\exa _j \cap {    \partial}\cil}  u_j ^{\dcritb}\, \din s _g -  \int _{\exa _j \cap {    \partial}\cil} (\chi _1 u_j)^{\dcritb} \, \din s _g\right] \\
&\le  \frac{\dcrit}{\dcritb}  \consy^{\dcritb/\dcrit}  \int _{\spt \psi \cap {    \partial}\cil}  (1- \chi _1^{\dcritb} )u_j^{\dcritb} \, \din s _g  \to 0.
\end{aligned}
\end{equation}

This proves \eqref{82}.

We can now complete the contradiction argument.

By \eqref{85}, \eqref{82}, Lemma~\ref{l5}, and
\[
\begin{aligned}
Y_{\consy}(\cil\setminus \overline{\exa_L})
\le& \frac{E_g(\chi_1u_j)}{{\opnj}(\chi_1u_j)^{2/\dcrit}}\\
\le & \frac{\lambdaj+o(1)}{{\opnj}(\chi_1u_j)^{2/\dcrit}},
\end{aligned}
\]
we obtain $Y_{\consy}(\cil\setminus \overline{\exa_L}) \le Y_{\consy}(\cil)$. Letting $L\to \infty$, we obtain
\[
Y_{\consy,\infty}(\cil)\le Y_{\consy}(\cil).
\]
This contradicts the strict inequality $Y_{\consy}(\cil)<{    Y_{\consy,\infty}(\cil)}$. We therefore conclude that $u\not\equiv 0$.

The right-hand sides in \eqref{r26} are locally bounded, and the metric and boundary are smooth. By \cite{Nittka2011}, $u\in C^{0,\alpha}_{\loc}(\cil)$ for some local exponent $0<\alpha<1$. The Robin data are then locally H\"older continuous. Moreover, \cite{Lieberman1988} yields $u\in C^{1,\alpha'}_{\loc}(\cil)$. Since $u\ge0$,
\[
\begin{aligned}
&\Delta_g u-\left(\frac{n-2}{4(n-1)}R_g-Y_{\consy}(\cil)u^{\dcrit-2}\right)^+u\\
&\qquad=-\left(\frac{n-2}{4(n-1)}R_g-Y_{\consy}(\cil)u^{\dcrit-2}\right)^-u\le0.
\end{aligned}
\]
The coefficient is locally bounded. Since {    $u\not\equiv0$}, the strong minimum principle and the version of Hopf's lemma in \cite{Rosales2019}, applied locally to functions in $C^1({    B_1(0)})\cap C^0({    \overline{B_1(0)}}){    \cap W^{1,2}(B_1(0))}$, give $u>0$ on $\cil$. Finally, applying Cherrier's regularity results \cite{zbMATH03879780}, we obtain $u\in C^\infty(\cil)$. \end{proof}

%


\vspace{1cm}


\noindent {\bf Funding:} This work was supported by the Conselho Nacional de Desenvolvimento Científico e Tecnológico (CNPq), Grant No. 150680/2025-2.

\noindent {\bf Data Availability:} No data were used for the research described in the article.

\noindent {\bf Declarations}

\noindent {\bf Conflict of interest:} The author declares no conflict of interest.


\noindent {\bf Declaration of generative AI and AI-assisted technologies in the writing process:} During the preparation of this work, the author(s) used ChatGPT-6.1 Sol to assist with calculations, the review of mathematical arguments and calculations, and language editing, including grammar and spelling corrections. After using this tool, the author(s) reviewed and edited the content as needed and take full responsibility for the content of the published article.


 
 \bibliographystyle{abbrv}

   \bibliography{ref}
\end{document}